\newcommand{\V}{\textup{Vol}(4_1)}
\newtheorem{thm}{Theorem}
\newtheorem{lem}[thm]{Lemma}
\newtheorem{cor}[thm]{Corollary}
\newtheorem{prop}[thm]{Proposition}
\theoremstyle{definition}
\newtheorem*{remark}{Remark}
\xpatchcmd{\proof}{\itshape}{\normalfont\proofnameformat}{}{}
\newcommand{\proofnameformat}{}
\DeclareMathOperator{\sgn}{sgn}
\newcommand{\captionbackgroundcolor}[1]{\colorlet{cpbgcol}{#1}}
\newcommand\blfootnote[1]{%
  \begingroup
  \renewcommand\thefootnote{}\footnote{#1}%
  \addtocounter{footnote}{-1}%
  \endgroup
}
\begin{document}

\renewcommand{\proofnameformat}{\bfseries}

\begin{center}
{\Large\textbf{A conjecture of Zagier and the value distribution of quantum modular forms}}

\blfootnote{\noindent \textbf{Keywords}: quantum modular forms, quantum knot invariants, colored Jones polynomial, Kashaev invariant, Sudler products, continued fractions. \textbf{Mathematics Subject Classification (2020)}: 57K16, 11J70, 11L03, 26D05, 60F05.}

\vspace{5mm}

\textbf{Christoph Aistleitner and Bence Borda}

\vspace{5mm}

{\footnotesize Graz University of Technology

Institute of Analysis and Number Theory

Steyrergasse 30, 8010 Graz, Austria

Email: \texttt{aistleitner@math.tugraz.at} and \texttt{borda@math.tugraz.at}}
\end{center}

\begin{abstract}
In his influential paper on quantum modular forms, Zagier developed a conjectural framework describing the behavior of certain quantum knot invariants under the action of the modular group on their arguments. More precisely, when $J_{K,0}$ denotes the colored Jones polynomial of a knot $K$, Zagier's modularity conjecture describes the asymptotics of the quotient $J_{K,0} (e^{2 \pi i \gamma(x)}) / J_{K,0}(e^{2 \pi i x})$ as $x \to \infty$ along rationals with bounded denominators, where $\gamma \in \mathrm{SL}(2,\mathbb{Z})$. This problem is most accessible for the figure-eight knot $4_1$, where the colored Jones polynomial has a simple explicit expression in terms of the $q$-Pochhammer symbol. Zagier also conjectured that the function $h(x) = \log (J_{4_1,0} (e^{2 \pi i x}) / J_{4_1,0}(e^{2 \pi i /x}))$ can be extended to a function on $\mathbb{R}$ which is continuous at irrationals. In the present paper, we prove Zagier's continuity conjecture for all irrationals for which the sequence of partial quotients in the continued fraction expansion is unbounded. In particular, the continuity conjecture holds almost everywhere on the real line. We also establish a smooth approximation of $h$, uniform over all rationals, in accordance with the modularity conjecture. As an application, we find the limit distribution (after a suitable centering and rescaling) of $\log J_{4_1,0}(e^{2 \pi i x})$, when $x$ ranges over all reduced rationals in $(0,1)$ with denominator at most $N$, as $N \to \infty$, thereby confirming a conjecture of Bettin and Drappeau.
\end{abstract}

\section{Introduction}

The colored Jones polynomials $J_{K,n}, n \geq 2$, and the Kashaev invariants $\langle K \rangle _N,~ N \geq 2$, are two quantum knot invariants that have been intensively studied in the mathematical and theoretical physics literature. The two invariants are related via $\langle K \rangle_N = J_{K,N} (e^{2 \pi i/N})$. Among their most interesting features are the connection with quantum field theory, and the link to the hyperbolic geometry of knot complements via the volume conjecture. For more basic background, see for example \cite{DG,GA,MY,WI}.

The colored Jones polynomial $J_{K,n}$ can be defined as a certain Laurent polynomial arising from Skein relations of the knot (for $n \geq 2$), and by periodicity it can be extra\-polated backward to give $J_{K,n}$ also for $n \leq 0$. We do not give a more detailed general definition, since in the present paper we will only be concerned with $J_{4_1,0}$, which has the explicit representation
\begin{equation} \label{j41}
J_{4_1,0} (q) = \sum_{n=0}^\infty \left|(1-q) (1-q^2) \cdots (1-q^n)\right|^2,
\end{equation}
defined for all complex roots of unity $q$. Note that the series in this definition is actually finite at roots of unity. In the formulas above ``$4_1$'' is the Alexander--Briggs notation for the figure-eight knot, which is the only knot with crossing number four, and in many regards the simplest hyperbolic knot. Using the $q$-Pochhammer symbol $(q;q)_n$ one can write
$$
J_{4_1,0} (q) = \sum_{n=0}^\infty |(q;q)_n|^2,
$$
which hints at a connection with enumerative combinatorics and (mock) modular forms (cf.\ for example \cite{BB}). We note in passing that for other hyperbolic knots $J_{K,0}$ has a somewhat similar, but more complicated representation in terms of $q$-Pochhammer symbols.

The volume conjecture concerns the exponential growth rate of the Kashaev invariant, and asserts that
$$
\lim_{N \to \infty}  \frac{2 \pi \log |\langle K \rangle_N|}{N} = \textup{Vol} (K), 
$$
where $\textup{Vol} (K)$ denotes the hyperbolic volume of the complement of $K$. Writing $\mathbf{J}_K (x) = J_{K,0} (e^{2 \pi i x})$ throughout this paper, the volume conjecture can also be written in terms of $\mathbf{J}_K$ since $\langle K \rangle_N = \mathbf{J}_K (1/N)$. The volume conjecture has been generalized to the arithmeticity conjecture, which predicts the full series expansion of $|\mathbf{J}_K (1/N)|$ as $N \to \infty$. Both conjectures have been proved for knots with a small number of crossings, including the figure-eight knot, for which
\begin{equation} \label{volume_4_1}
\textup{Vol} (4_1) = 4 \pi \int_0^{5/6} \log (2 \sin (\pi x)) \, \mathrm{d}x \approx 2.0299,
\end{equation}
but they remain open for general knots $K$. 

In his seminal paper on quantum modular forms \cite{ZA}, Zagier mentions the colored Jones polynomial as the ``most mysterious and in many ways the most interesting'' among the examples listed in the paper (even if it, strictly speaking, does not satisfy his definition of a quantum modular form). For fixed $N$, the numbers $\mathbf{J}_K(a/N),~1 \leq a \leq N,~\gcd(a,N)=1,$ are the Galois conjugates of $\langle K \rangle_N$ in $\mathbb{Q} (e^{2 \pi i/N})$. Zagier's modularity conjecture is a vast generalization of the volume and arithmeticity conjectures, addressing the Galois invariant spreading of the Kashaev invariant on the set of complex roots of unity. The modularity conjecture has also been proved for all hyperbolic knots with a small number of crossings, including the figure-eight knot \cite{BD1}, but remains open for general $K$. We do not replicate the statement of the modularity conjecture in full generality here, and refer to \cite{ZA} instead. Roughly speaking, the modularity conjecture makes a detailed prediction about the asymptotic expansion of the quotient $\mathbf{J}_{K} (\gamma(x)) / \mathbf{J}_{K} (x)$ as $x \to \infty$ along rationals $x$ with bounded denominators, where $\gamma \in \mathrm{SL}(2,\mathbb{Z})$. In the special case when $x$ runs along the positive integers and $\gamma(x) = -1/x$, this leads back to the arithmeticity conjecture.

In the present paper we will be concerned with the case of $K = 4_1$ and $\gamma(x) = -1/x$, whence the quantity considered in Zagier's modularity conjecture (after taking a logarithm and switching a sign) becomes
\begin{equation} \label{h_def}
h(x) = \log \frac{\mathbf{J}_{4_1}(x)}{\mathbf{J}_{4_1}(1/x)} , \qquad x \in \mathbb{Q} \backslash \{ 0 \}.
\end{equation}
This function $h$ will be the central object of study in the present paper. Note that $\mathbf{J}_{4_1}(x)$ is easily seen to be invariant under the map $x \mapsto x+1$, and that the two maps $\gamma(x) = x+1$ and $\gamma(x) = -1/x$ together generate the full modular group. Note also that  $h(x)=-h(1/x)$ and $h(x)=h(-x)$, so it is sufficient to understand $h$ on the interval $(0,1)$. Zagier's paper \cite{ZA} contains several plots of $h$, including one similar to our Figure \ref{fig:h} below (the ``global'' plot), as well as plots showing the behavior of $h$ near rationals with small denominators and near badly approximable irrationals, similar to our Figures \ref{fig:psi_1_10} and \ref{fig:psi_sqrt}. He observes that the ``global'' plot misleadingly suggests that $h$ could be monotonically decreasing, which is not actually the case; compare the comment after Figure \ref{fig:psi} below. Furthermore, he writes that the experimental evidence is 
\begin{quote}
 ``[\dots] seeming to indicate that the function $h(x)$ is continuous but in general not differentiable at irrational values of $x$.''
\end{quote}
Since $h(x)$ is only defined over the rationals, the continuity at irrationals is of course to be understood with respect to the real topology; in other words, Zagier suggests that $h(x)$ can be extended to a function on $\mathbb{R}$ (rather than $\mathbb{Q}$) that is continuous at irrationals. In the present paper we prove that indeed this is the case, with the possible exclusion of irrationals which are badly approximable in the sense of Diophantine approximation. Recall that the badly approximable numbers are exactly those which have bounded partial quotients in their continued fraction expansions. (Some background on Diophantine approximation is given at the beginning of Section \ref{trig_section}). 

\begin{figure}[ht!]  
\begin{center}
\includegraphics[width=0.75 \linewidth]{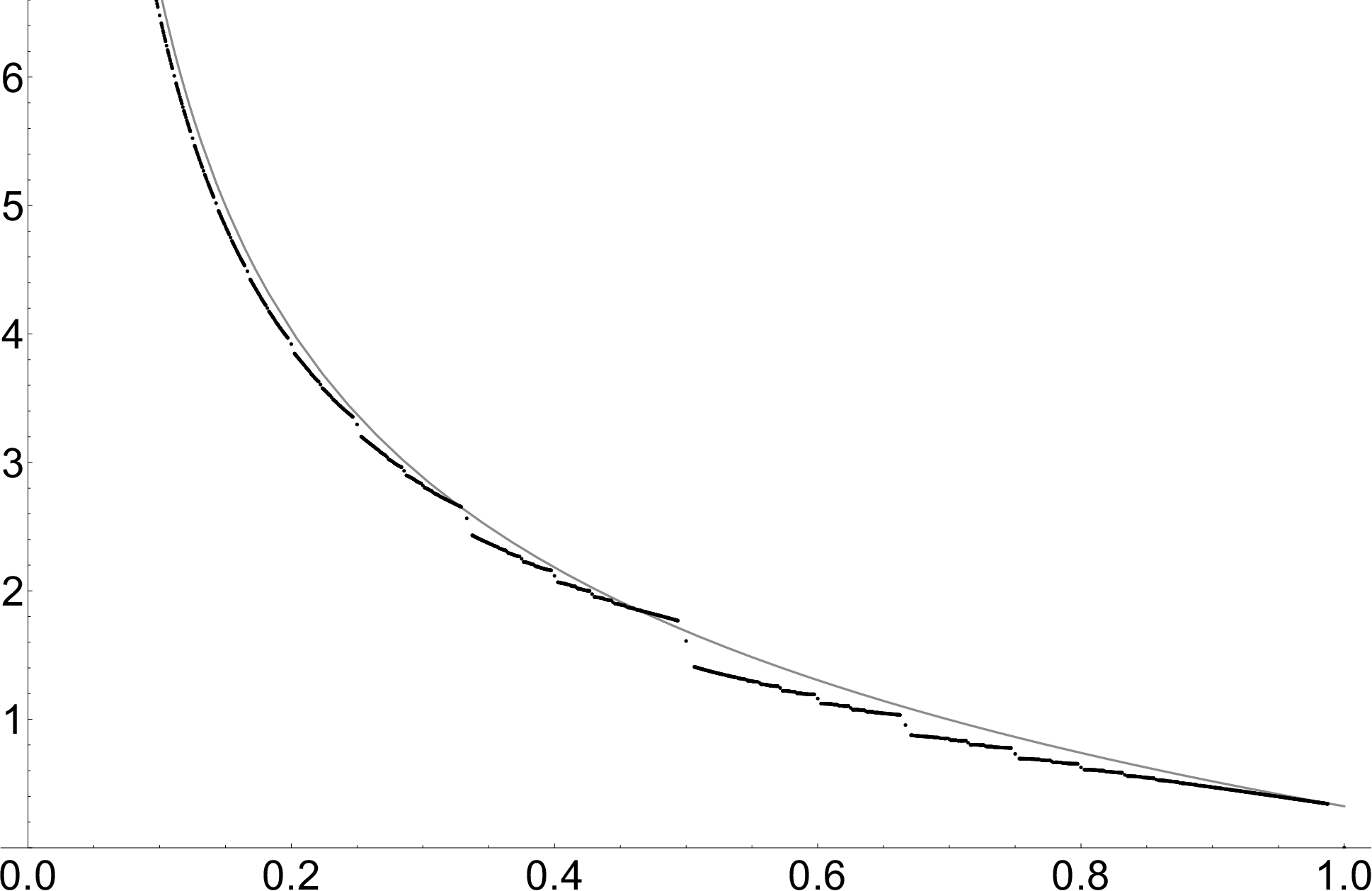}
\end{center}
  \caption{The function $h(x)$, evaluated at all rationals in $(0,1)$ with denominator at most $80$ (black graph with jumps). For comparison, the plot also shows the function $\frac{\textup{Vol}(4_1)}{2 \pi x} - \frac{3}{2} \log x$ (gray solid line), which is suggested as a continuous approximation to $h(x)$ by Formula \eqref{formula_h}.}  \label{fig:h}
\end{figure}

\begin{thm} \label{th1}
Assume that $\alpha$ is an irrational whose sequence of partial quotients in the continued fraction expansion is unbounded. Then $\lim_{x \to \alpha} h(x)$ exists and is finite.
\end{thm}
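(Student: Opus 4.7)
The plan is to decompose $h$ over rationals as a continuous main term plus a Diophantine error, and then exploit large partial quotients of $\alpha$ to control the error in the limit.

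First, I would establish a representation
\[
h(p/q) = \Phi(p/q) + R(p/q)
\]
valid for every reduced fraction $p/q \in (0,1)$, where $\Phi(x) = \V/(2\pi x) - (3/2) \log x$ (up to an additive continuous term) is the smooth model suggested by Figure \ref{fig:h} and by the arithmeticity conjecture for the figure-eight knot, and $R(p/q)$ is an error. Since $\mathbf{J}_{4_1}(p/q)$ is the finite sum $\sum_{n=0}^{q-1} P_n(p/q)^2$ of squared Sudler products $P_n(x) = \prod_{k=1}^{n} |2 \sin(\pi k x)|$, producing this asymptotic requires a Laplace-type analysis of the sum around its maximizer $n = n^*(p/q)$. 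The exponential main term $\V q/(2\pi)$ arises from the maximum of $P_n(p/q)^2$, and the logarithmic corrections combine with their counterpart in the expansion of $\log \mathbf{J}_{4_1}(\{q/p\})$ to yield $\Phi(x)$ upon subtraction.

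Second, I would control $R$ via a Gauss-map recursion. Because $\mathbf{J}_{4_1}$ is $1$-periodic, $h(x) = \log \mathbf{J}_{4_1}(x) - \log \mathbf{J}_{4_1}(T(x))$, where $T(x) = \{1/x\}$ is the Gauss map. Combined with an approximate functional identity for $\Phi$ under $T$, this yields
\[
R(x) = R(T(x)) + E(x),
\]
where $E(x)$ is a per-step increment estimable in terms of the first partial quotient of $x$ and local Sudler-product data. Iterating along $p/q = [0; a_1, \dots, a_k]$ produces the telescoping expansion
\[
R(p/q) = \sum_{j=0}^{k-1} E(T^j(p/q)).
\]

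Third, for $\alpha = [0; a_1, a_2, \dots]$ with unbounded partial quotients, I would argue as follows. Given $\varepsilon > 0$, pick $m$ so that $a_{m+1}$ is large enough to force the tail bound $\sum_{j \geq m} |E(T^j x)| \leq \varepsilon$ for all rationals $x$ close enough to $\alpha$ (such $x$ share their first $m+1$ partial quotients with $\alpha$). For two such rationals $x, y$, the first $m$ terms of the telescoping expansions of $R(x)$ and $R(y)$ are close by continuity of $E$ on the corresponding rational orbit points, while the tail portions are uniformly $O(\varepsilon)$. Therefore $R$ is Cauchy as its argument tends to $\alpha$; combined with the continuity of $\Phi$, this gives the theorem.

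The main obstacle lies in Step 1: producing the smooth approximation to $h$ with an error term effectively controlled by the Diophantine data in a form compatible with the telescoping in Step 2. Existing asymptotics for Sudler products apply to $P_n(x)$ for special $n$ or along particular sequences of $x$; what is needed here is a uniform saddle-point analysis of the entire sum $\mathbf{J}_{4_1}(p/q)$ as $p/q$ ranges over all rationals in $(0,1)$, with per-step error bounds whose shape matches the Gauss-map recursion that drives Step 3.
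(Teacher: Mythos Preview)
Your high-level strategy---show that $h$ is Cauchy as rationals approach $\alpha$---matches the paper's, but the mechanism you propose has genuine gaps, and the paper's route is quite different.

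\textbf{The gap in your Steps 2--3.} Telescoping $R$ along the Gauss map gives $R(x) = R(T^m x) + \sum_{j<m} E(T^j x)$, not a terminating sum; to compare $R(x)$ and $R(y)$ you must control both the head differences and the remainder terms $R(T^m x)$, $R(T^m y)$. For the heads you invoke ``continuity of $E$,'' but $E = R - R\circ T$ is built from $h$ itself, so this is circular without an independent modulus of continuity. For the remainders, the only information a large $a_{m+1}$ gives is that $T^m x, T^m y \in (0, 1/a_{m+1}]$; to conclude $|R(T^m x)|, |R(T^m y)| < \varepsilon$ you would need $R(z)\to 0$ uniformly over all rationals $z\to 0$. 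That is exactly the open conjecture \eqref{psi_conj}; the paper only proves $R(z)=O(|\log z|)$ (Theorem~\ref{th3}), which diverges. If instead you try to bound the full tail $\sum_{j\ge m}|E(T^j x)|$, note that the iterates $T^j x$ for $j>m$ depend on the later partial quotients of $x$, which are arbitrary and not determined by $\alpha$; a single large $a_{m+1}$ says nothing about them.

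\textbf{What the paper does instead.} Rather than iterate the Gauss map, the paper factorises $\mathbf{J}_{4_1}(r)$ directly at a single position $k$ with $a_{k+1}$ large, via the Ostrowski expansion. A ``local $5/6$ principle'' shows that only $N$ with $b_k(N)\approx (5/6)a_{k+1}$ contribute, which decouples the sum into $A_k(r)\cdot B_k(r)$ where $A_k$ depends only on $a_1,\dots,a_k$. The crucial point is that one does \emph{not} show the tails $B_k(r)$ and $B_k(r')$ are individually small; instead a delicate termwise comparison (Proposition~\ref{tailprop}) shows their \emph{ratio} is $1+o(1)$. This yields $h(r)=M_k(\alpha)+o(1)$ uniformly for $r\in I_{k+1}$, with $M_k(\alpha)$ independent of $r$, and the Cauchy criterion follows. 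The smooth model $\Phi$ plays no role in this argument.
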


It is well known that the set of badly approximable numbers has vanishing Lebesgue measure. Thus Theorem \ref{th1} implies that the answer to Zagier's continuity problem is positive almost everywhere.

\begin{cor} \label{cor_th1}
The function $h(x)$ can be extended to an almost everywhere continuous function on $\mathbb{R}$.
\end{cor}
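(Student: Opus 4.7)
The plan is to combine Theorem \ref{th1} with the classical fact that the set of badly approximable irrationals has Lebesgue measure zero, and then to build an explicit extension whose discontinuity set is confined to a null set. Let $A$ denote the set of irrationals with unbounded partial quotients and $B$ the set of badly approximable irrationals. Since $B = \bigcup_{M \geq 1} \{\alpha : a_n(\alpha) \leq M \text{ for all } n\}$ is a countable union of closed sets, each Lebesgue-null by a standard Gauss-measure estimate, we have $|B| = 0$, so $A$ has full measure in $\mathbb{R}$.

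I would then build the extension $\tilde{h} : \mathbb{R} \to \mathbb{R}$ by setting $\tilde{h}(q) = h(q)$ on $\mathbb{Q} \setminus \{0\}$ and, for irrational $x$, defining
\[
\tilde{h}(x) \;=\; \limsup_{q \to x,\; q \in \mathbb{Q}} h(q)
\]
whenever this quantity is finite, and assigning an arbitrary fixed real value at the remaining points. By Theorem \ref{th1}, for every $\alpha \in A$ the limsup is in fact a finite limit, equal to $\lim_{q \to \alpha,\, q \in \mathbb{Q}} h(q)$, so this definition agrees with the natural one on $A$; the ad hoc convention can affect only a subset of the null set $B$.

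The remaining step is to verify that $\tilde{h}$ is continuous at every $\alpha \in A$; since the complement of $A$ is null, this will complete the proof. A routine neighborhood argument shows that the function $\phi(x) := \limsup_{q \to x,\, q \in \mathbb{Q}} h(q)$ is upper semicontinuous and that $\psi(x) := \liminf_{q \to x,\, q \in \mathbb{Q}} h(q)$ is lower semicontinuous on $\mathbb{R}$. For any sequence $y_n \to \alpha$, the rational terms contribute $\tilde{h}(y_n) = h(y_n) \to \tilde{h}(\alpha)$ directly from Theorem \ref{th1}, while for irrational terms the upper semicontinuity of $\phi$ at $\alpha$ forces $\phi(y_n) \leq \phi(\alpha) + o(1)$ (in particular $\phi(y_n)$ is eventually finite, so $\tilde{h}(y_n) = \phi(y_n)$), and the inequality $\phi \geq \psi$ combined with lower semicontinuity of $\psi$ yields $\phi(y_n) \geq \psi(\alpha) - o(1)$. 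Since $\phi(\alpha) = \psi(\alpha) = \tilde{h}(\alpha)$, both bounds converge to $\tilde{h}(\alpha)$, forcing $\tilde{h}(y_n) \to \tilde{h}(\alpha)$.

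The main subtlety worth flagging is that $B$ is dense in $\mathbb{R}$, so a sequence $y_n \to \alpha \in A$ may pass entirely through $B$, where the extension was defined only up to an arbitrary convention; the semicontinuous-envelope construction sidesteps this precisely because the upper and lower bounds above do not depend on whether $y_n$ lies in $A$ or in $B$.
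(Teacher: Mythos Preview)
Your argument is correct and follows the same route as the paper, which simply deduces the corollary from Theorem~\ref{th1} together with the standard fact that badly approximable numbers form a null set; you have supplied the details the paper leaves implicit. One minor remark: the semicontinuous-envelope machinery, while valid, is slightly heavier than needed---once $\lim_{q\to\alpha,\,q\in\mathbb{Q}}h(q)=L$ exists, the $\varepsilon$--$\delta$ characterization of this limit already forces $|\phi(y)-L|\le\varepsilon$ and $|\psi(y)-L|\le\varepsilon$ for \emph{every} $y$ in a punctured $\delta/2$-neighborhood of $\alpha$, which immediately yields continuity of $\tilde h$ at $\alpha$ without invoking semicontinuity globally.
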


Our more technical Theorem \ref{quantitativecontinuitytheorem}, which will be stated in Section \ref{cont_section}, gives quantitative upper bounds for the maximal fluctuation of $h$ on intervals defined in terms of a joint initial segment of the continued fraction expansion. This can be read as establishing a modulus of continuity for $h$ at $\alpha$, which takes into account the size of the partial quotients in the continued fraction expansion of $\alpha$.

Theorem \ref{th1} leaves the continuity of $h$ at badly approximable irrationals open. It will be seen that our argument crucially relies on the existence of an unbounded subsequence of partial quotients, so some essential new ideas will be necessary to treat the case of badly approximable $\alpha$. Some partial results for quadratic irrational $\alpha$ (when the sequence of partial quotients is eventually periodic) are contained in our earlier paper \cite{AB2}. In this case Zagier's continuity problem might be more tractable than in the general case, due to the additional structure coming from the periodicity of the continued fraction expansion. The case of general badly approximable $\alpha$ (with no particular structure in the sequence of partial quotients) seems to be even more challenging.

Zagier's modularity conjecture in the special case of $h(x)$ implies in particular that
\begin{equation} \label{formula_h}
h(x) = \frac{\textup{Vol}(4_1)}{2 \pi x} - \frac{3 \log x}{2} - \frac{\log 3}{4} + o(1)
\end{equation}
as $x \to 0^+$ along rationals with bounded numerators; this is in accordance with the numerical data (cf.\ Figure \ref{fig:h}), which in fact seems to suggest that the same holds as $x \to 0^+$ along all rationals. Defining
\begin{equation} \label{psi_def}
\psi(x) := h(x) - \frac{\textup{Vol}(4_1)}{2 \pi x} + \frac{3 \log x}{2}, \qquad x \in (0,1) \cap \mathbb{Q},
\end{equation}
the function $\psi$ seems to capture very well the local ``irregular'' aspects of $h$, such as the jumps at rationals, and certain self-similarity properties (see Figure \ref{fig:psi}). It seems that so far hardly anything was known about the (maximal) size of $\psi$. For example, while the plots clearly indicate that Zagier's function $h(x)$ should be bounded on $(0,1)$ as long as one stays away from $x=0$, we believe that nothing of that sort was known so far. Here we will prove the following bound. 

\begin{thm} \label{th3} \label{hasymptoticstheorem} 
We have
\[ h(x) = \frac{\V}{2 \pi x} + O \left( 1+ |\log x| \right) \]
for all rationals $x \in (0,1)$, with a universal implied constant.
\end{thm}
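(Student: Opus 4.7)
The plan is to reduce Theorem \ref{th3} to the renormalisation identity
\[ \log \mathbf{J}_{4_1}(p/q) - \log \mathbf{J}_{4_1}(r/p) = \frac{q \V}{2\pi p} + O\!\left(1 + \log(q/p)\right), \]
valid for every reduced $x = p/q \in (0,1)$, where $q = ap + r$ with $0 \leq r < p$. Because $\mathbf{J}_{4_1}$ is $1$-periodic and $r/p = \{1/x\}$, the left-hand side equals $h(p/q)$, while the right-hand side is $\V/(2\pi x) + O(1 + |\log x|)$, so the identity is equivalent to the theorem. In the base case $p = 1$ it collapses (via $r=0$, $\mathbf{J}_{4_1}(0) = 1$) to the quantitative volume conjecture $\log \mathbf{J}_{4_1}(1/q) = q\V/(2\pi) + O(\log q)$, which is known for the figure-eight knot.

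The analysis rests on the finite Sudler-product expansion
\[ \mathbf{J}_{4_1}(p/q) = \sum_{n=0}^{q-1} P_n(p/q)^2, \qquad P_n(x) = \prod_{k=1}^{n} 2|\sin \pi k x|. \]
Using the three-distance structure of the orbit $\{kp/q\}_{k \geq 1}$, the log partial sum $\sum_{k=1}^{n} \log 2|\sin \pi k p/q|$ splits into two scales via the Ostrowski expansion with respect to the convergents of $p/q$. The \emph{bulk} indices---those $k$ with $\{kp/q\}$ comparable to $p/q$---produce an equidistributed Riemann sum for $\log 2\sin \pi t$, which by Euler--Maclaurin evaluates at the maximising $n$ to $(q/p) \int_0^{5/6} \log(2\sin \pi t)\,dt = q\V/(4\pi p)$; after squaring, this gives the leading term $q\V/(2\pi p)$ on the right-hand side. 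The \emph{small-scale} indices---those $k$ with $\{kp/q\}$ of order $1/q$---have their values $2|\sin(\pi k p/q)|$ in correspondence with the $p-1$ nonzero values $2|\sin(\pi l r/p)|$ through the continued-fraction step $p/q \mapsto r/p$, and reassemble into a Sudler product at the rescaled argument $r/p$, matching the $\log \mathbf{J}_{4_1}(r/p)$ contribution.

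The main obstacle is to execute this matching with error $O(1 + \log(q/p))$ rather than the weaker $O(\log q)$. Naively reducing $\mathbf{J}_{4_1}(p/q)$ and $\mathbf{J}_{4_1}(r/p)$ to their maximal Sudler summands via $\max_n P_n^2 \leq \mathbf{J}_{4_1}(p/q) \leq q\, \max_n P_n^2$ loses a $\log q$ at each side, and these losses do not cancel in general. The sharper bound is essential for $p$ comparable to $q$ (i.e.\ $x$ near $1$), where $\V/(2\pi x) = O(1)$ and an $O(\log q)$ error would swamp the main term. To avoid this, one compares the two sums directly, using Abel summation against sharp estimates for the partial sums $\sum_{k \leq N} \log 2 |\sin \pi k p/q|$ controlled by the Lobachevsky antiderivative. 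A preliminary reduction to the range $p \leq q/2$ via the reflection symmetry $\mathbf{J}_{4_1}(x) = \mathbf{J}_{4_1}(1-x)$ (which follows from $\mathbf{J}_{4_1}$ being real-valued) may also be useful.
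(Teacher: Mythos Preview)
Your reformulation is correct and matches the paper's: writing $x=[0;a_1,\dots,a_L]$ and $r'=\{1/x\}$, the claim becomes $h(x)=\frac{\V}{2\pi}a_1+O(\log(a_1+1))$, and the high-level picture---a bulk part yielding $\frac{\V}{2\pi}a_1$ from the Riemann sum for $\int_0^{5/6}\log|2\sin\pi t|\,dt$, plus a small-scale part matching $\mathbf{J}_{4_1}(r')$---is exactly the heuristic behind the paper's proof. You have also correctly isolated the real obstacle: a naive comparison loses $O(\log q)$ on each side, which is fatal when $a_1$ is bounded.

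Where your proposal falls short is that it does not overcome this obstacle; it only names it. The sentence ``one compares the two sums directly, using Abel summation against sharp estimates for the partial sums'' is a hope, not a proof. The paper's argument shows what is actually required for this direct comparison, and it is substantial:
\begin{itemize}
\item A \emph{local $5/6$-principle} (Proposition~\ref{local5/6prop} and Corollary~\ref{5/6corollary}) that lets one discard, up to a multiplicative $O(1)$, all $N$ whose relevant Ostrowski digits stray from $\lfloor(5/6)a_{\ell+1}\rfloor$. Without this step you cannot restrict the comparison to a tractable set of summands, and the error from the ``bad'' $N$ is exactly where the unwanted $\log q$ would enter.
\item A \emph{tail estimate} (Proposition~\ref{tailprop}) which makes precise your claim that the small-scale factors ``reassemble into a Sudler product at the rescaled argument $r/p$''. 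Proving that $P_N(r)/P_{N'}(r')$ is $\exp(O(\log a_1))$ uniformly over the good $N$ requires a delicate factor-by-factor analysis (Lemmas~\ref{explicitlemma} and~\ref{ql-ql'lemma}) controlling how the shifted products at $r$ and $r'$ differ; Abel summation alone does not deliver this.
\end{itemize}
Your text contains no analogue of either ingredient.

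One minor point: the reflection $\mathbf{J}_{4_1}(x)=\mathbf{J}_{4_1}(1-x)$ is correct, but it does not yield a reduction for $h$, since $h(1-x)\ne h(x)$ in general (the denominators $\mathbf{J}_{4_1}(1/x)$ and $\mathbf{J}_{4_1}(1/(1-x))$ are unrelated). The paper handles small $a_1$ directly (Case~2 of the proof), again via the tail estimate.
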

In particular, $h$ is locally bounded on $(0,1)$. We conjecture that Theorem \ref{th3} can actually be improved to
\begin{equation} \label{psi_conj}
h(x) = \frac{\V}{2 \pi x} - \frac{3 \log x}{2} + O(1),
\end{equation}
or equivalently that $\psi$ is bounded on $(0,1)$, as suggested by Figure \ref{fig:psi}.

\begin{figure}[ht!]  
\begin{center}
  \includegraphics[width=0.75 \linewidth]{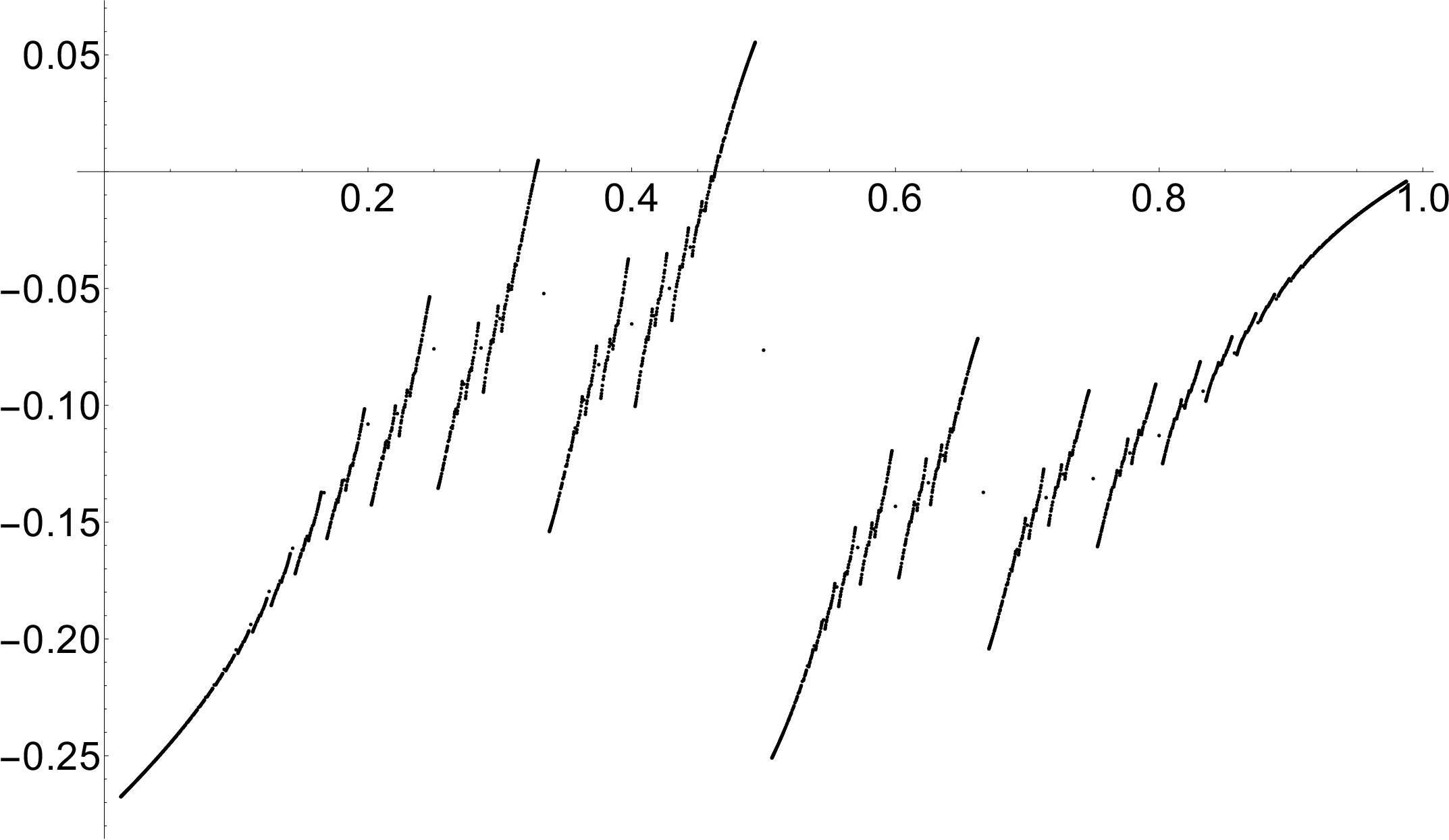}
\end{center}
  \caption{The function $\psi(x) = h(x) - \textup{Vol}(4_1)/(2 \pi x) + (3/2) \log x$, evaluated at all rationals in $(0,1)$ with denominator at most $80$. Note the apparent self-similar structure of $\psi$. Note also the isolated function values at rationals with small denominators such as $x=1/2$ or $x=1/3$, and that $\lim_{x \to 0} \psi(x)$ appears to be $-\frac{\log 3}{4} \approx -0.275$ and $\lim_{x \to 1} \psi(x)$ appears to be 0, in accordance with the arithmeticity and modularity conjectures.} \label{fig:psi}
\end{figure}

\begin{figure}[ht!]  
\begin{center}
  \includegraphics[width=0.75 \linewidth]{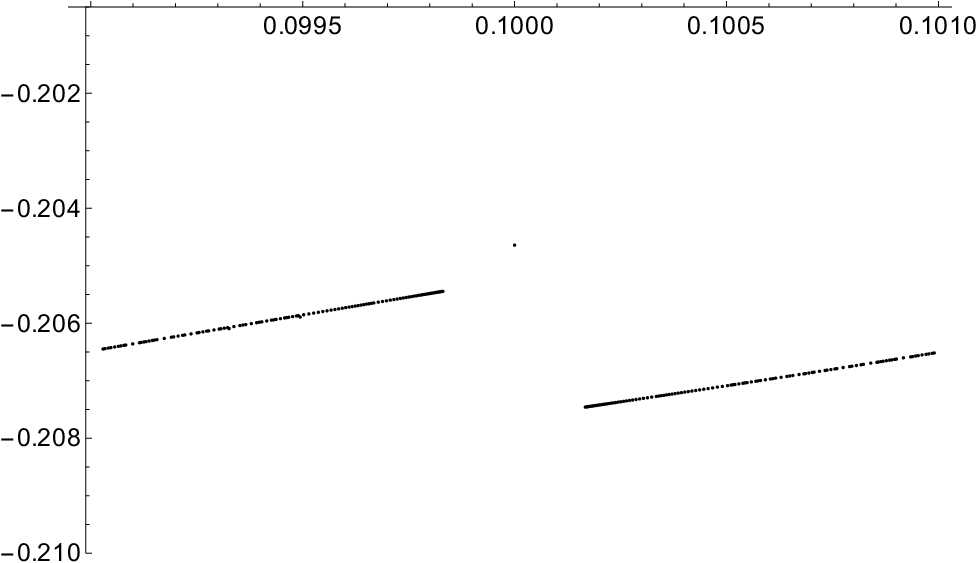}
\end{center}
  \caption{The function $\psi(x)$, evaluated at all rationals with denominator at most $600$ in a small neighborhood of $x=1/10$. Note the isolated function value at $x=1/10$, and the very regular behavior when approaching 1/10 from the left or from the right. Note also that the ``global'' plot in Figure \ref{fig:psi} might seem to indicate that $\psi$ consists of a continuous increasing component which is interceded by a discrete decreasing component, and that the values of $\psi$ at rationals are always situated between the corresponding left and right limits, i.e.\ $\lim_{x \to r^{-}} \psi(x) > \psi(r) > \lim_{x \to r^{+}} \psi(x)$. However, as this figure indicates, this is probably not true for some (small?) rationals, where actually $\lim_{x \to r^{-}} \psi (x) < \psi(r)$, i.e.\ an initial \emph{upward} jump is followed by a downward jump. It might still be the case that $\lim_{x \to r^{-}} \psi(x) > \lim_{x \to r^{+}} \psi(x)$ at all rationals $r \in (0,1)$; at least we have not found a counterexample.} \label{fig:psi_1_10}
\end{figure}

\begin{figure}[ht!]  
\begin{center}
  \includegraphics[width=0.75 \linewidth]{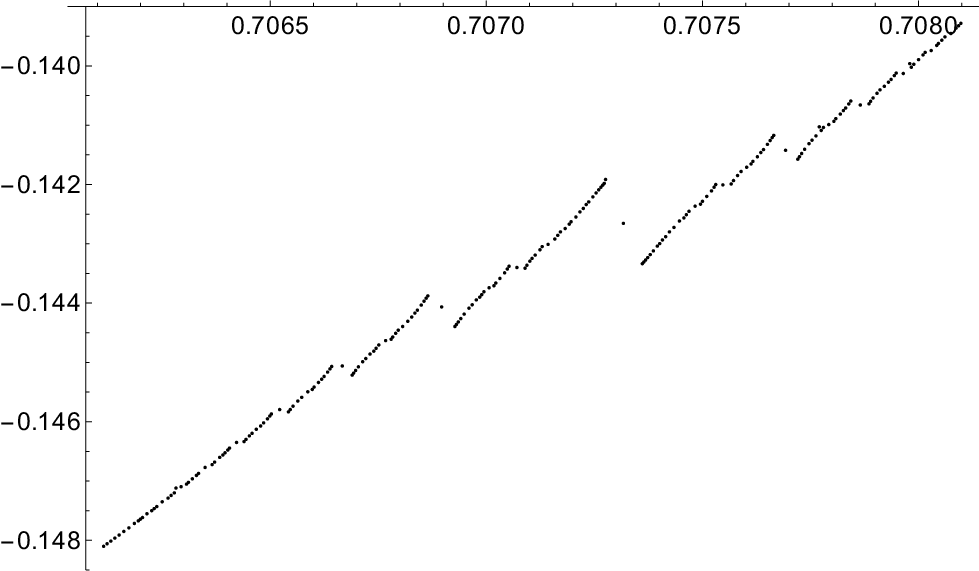}
\end{center}
  \caption{The function $\psi(x)$, evaluated at all rationals with denominator at most $600$ in a small neighborhood of $x=1/\sqrt{2}$. When compared to Figure \ref{fig:psi_1_10} above, one can see the different bevahior of $\psi$ near rationals with small denominators and near badly approximable irrationals (note that the scaling is the same in both plots, making them directly comparable).} \label{fig:psi_sqrt}
\end{figure}

Zagier's continuity problem is interesting in its own right, but as observed in \cite{BD1}, it also has implications on the value distribution of $\log \mathbf{J}_{4_1} (x)$ as $x$ ranges over all rationals with their denominators bounded by a given threshold. More precisely, \cite{BD1} contains a detailed prediction for the limit distribution of a suitably centered and rescaled version of $\log \mathbf{J}_{K} (x)$, and in \cite[Theorem 4]{BD1} it was shown that a positive answer to the conjecture in Equation \eqref{psi_conj} above, together with a positive answer to Zagier's continuity problem, would imply the validity of this prediction for $K=4_1$. It turns out that our Theorems \ref{th1} and \ref{th3}, together with a result of Bettin and Drappeau \cite{BD3} on the distribution of sums of partial quotients of random rationals, are sufficient to establish unconditionally Bettin and Drappeau's conjecture on the value distribution of $\log \mathbf{J}_{4_1}$.

\begin{thm} \label{th2}
Let $F_N$ denote the set of all reduced rationals in $(0,1)$ with denominator at most $N$. There exists a constant $D$ such that for every interval $[a,b] \subset \mathbb{R}$,
\[
\lim_{N \to \infty} \frac{1}{\# F_N}  \# \left\{ x \in F_N: \left( \frac{\log \mathbf{J}_{4_1}(x)}{\frac{\textup{3 Vol}(4_1)}{\pi^2} \log N} - \frac{2}{\pi} \log \log N - D \right) \in [a,b] \right\}  = \int_a^b g(y) \, \mathrm{d}y,
\]
where
$$
g(y) = \frac{1}{2 \pi} \int_{-\infty}^\infty e^{-ity - |t| \left( 1 + \frac{2i}{\pi} \sgn(t) \log |t| \right)} \, \mathrm{d}t
$$
is the density function of the standard stable law with stability parameter $1$ and skewness parameter $1$.
\end{thm}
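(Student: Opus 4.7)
\textbf{Proof proposal for Theorem \ref{th2}.} The plan is to follow the strategy of Bettin and Drappeau \cite{BD1}, replacing their conjectural input (boundedness of $\psi$ together with continuity of $h$ at every irrational) with our weaker but sufficient Theorems \ref{th1} and \ref{th3}. For $x \in (0,1) \cap \mathbb{Q}$ with continued fraction $[0;a_1,\dots,a_K]$, denote the Gauss orbit by $x_0 = x$, $x_{k+1} = \{1/x_k\}$, so that $x_K = 0$. Since $\mathbf{J}_{4_1}$ is $1$-periodic and $\mathbf{J}_{4_1}(0)=1$, the definition of $h$ telescopes to
\[
\log \mathbf{J}_{4_1}(x) \;=\; \sum_{k=0}^{K-1} h(x_k).
\]

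Next I would insert Theorem \ref{th3} in the form $h(y) = \V/(2\pi y) + g(y)$ with $|g(y)| \ll 1+|\log y|$, and use the identity $1/x_k = a_{k+1} + x_{k+1}$ to obtain
\[
\log \mathbf{J}_{4_1}(x) \;=\; \frac{\V}{2\pi}\, S_K(x) \;+\; \frac{\V}{2\pi}\sum_{k=1}^{K} x_k \;+\; \sum_{k=0}^{K-1} g(x_k),
\]
where $S_K(x) = a_1 + \cdots + a_K$ is the sum of the partial quotients. The first term carries the main contribution; the other two are Birkhoff sums of the bounded function $y$ and of the Gauss-$L^2$ function $g$ along the orbit of $x$ under the Gauss map.

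The final step is to transfer these decompositions into a limit law for $x$ sampled uniformly from $F_N$. Bettin and Drappeau established the stable-$(1,1)$ limit theorem for $S_K(x)$ over $F_N$, with centering of order $\log N \log \log N$ and scaling of order $\log N$, by a spectral perturbation analysis of the Gauss--Kuzmin--Wirsing transfer operator; the typical length $K \sim (12 \log 2 /\pi^2) \log N$ combined with multiplication by $\V/(2\pi)$ produces exactly the scaling $(3\V/\pi^2)\log N$ and the $(2/\pi) \log \log N$ centering of Theorem \ref{th2}. The two remaining Birkhoff sums are treated by the same transfer-operator machinery with a real-valued perturbation: their linear-in-$K$ ergodic means are absorbed into the constant $D$, while their fluctuations are $O(\sqrt{K}) = o(\log N)$ and invisible after rescaling. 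Theorem \ref{th1}, and more precisely its quantitative form \ref{quantitativecontinuitytheorem}, is what permits the identification of $g$ evaluated at rationals with its almost-everywhere continuous extension to $\mathbb{R}$, which is what the transfer-operator spectral framework must integrate against.

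The main obstacle is the extension of the transfer-operator argument of \cite{BD1} to the \emph{unbounded} test function $g$: in \cite[Theorem~4]{BD1} the analogous step was carried out under the boundedness conjecture \eqref{psi_conj}, whereas here $g$ is only $L^2$ with respect to the Gauss measure and diverges logarithmically at $0$. Closing this gap will likely require either a truncation argument coupled with tail estimates for very small $x_k$ (equivalently, very large partial quotients $a_{k+1}$), or else working in a Banach space adapted to the logarithmic singularity of $g$ at $0$. The quantitative continuity bound of Theorem \ref{quantitativecontinuitytheorem}, which controls the fluctuation of $h$ in terms of the sizes of the partial quotients, is precisely what makes such a truncation effective and permits an unconditional proof of Bettin and Drappeau's conjecture.
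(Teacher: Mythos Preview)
Your decomposition and overall strategy coincide with the paper's: the same telescoping of $\log \mathbf{J}_{4_1}$ along the Gauss orbit, the same splitting into $\frac{\V}{2\pi}\sum a_\ell$ plus a Birkhoff sum of a function with logarithmic blow-up at $0$, and the same appeal to the Bettin--Drappeau stable limit law for $\sum a_\ell$. The paper packages the residual term as $\psi^*(x)=h(x)+\frac{\V}{2\pi}(x-1/x)$, which equals your $g(x)+\frac{\V}{2\pi}x$; the difference is cosmetic.

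The gap you flag---handling the unbounded Birkhoff summand---is genuine, and your proposal does not close it. The paper's resolution is different from either option you suggest (truncation with tail estimates, or a new Banach space), and simpler. The point is that Theorem~1.2 of \cite{BD3} already applies to test functions with a logarithmic singularity at $0$, provided they are sufficiently regular on $[\eta,1]$ for each $\eta>0$. The only obstacle is that $\psi^*$, being defined via $h$, is a priori just a function on $\mathbb{Q}\cap(0,1)$ with no regularity. Here the paper uses Corollary~\ref{cor_th1}: the almost-everywhere continuous extension of $h$ makes $\psi^*$ Riemann integrable on each $[\eta,1]$, hence squeezable between step functions, and after a smoothing step between Lipschitz functions $g^-\le\psi^*\le g^+$ that equal $\pm c\log x$ on $(0,\eta]$ and satisfy $\int_0^1(g^+-g^-)\le\varepsilon$. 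Theorem~1.2 of \cite{BD3} applies to $g^\pm$ directly, yielding a CLT for their Birkhoff sums; after dividing by $\log N$ these concentrate at constants $\mu^\pm$ with $\mu^+-\mu^-\le 2\varepsilon$, and letting $\varepsilon\to 0$ finishes the proof.

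Two corrections to your heuristics. First, only the \emph{qualitative} almost-everywhere continuity (Corollary~\ref{cor_th1}) is used, not the quantitative Theorem~\ref{quantitativecontinuitytheorem}; its role is exactly to supply Riemann integrability via Lebesgue's criterion, so that the sandwich functions $g^\pm$ exist. Second, one does not need a spectral analysis tailored to $\psi^*$ itself: the transfer-operator input from \cite{BD3} is invoked only for the smooth majorant and minorant $g^\pm$, where its hypotheses are easy to verify.
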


Many properties of the functions $h$ and $\psi$ remain mysterious. A conjecture stated in \cite{ZA} would imply in particular that the left and right limits $\lim_{x \to r^{-}} h(x)$ and $\lim_{x \to r^{+}} h(x)$ exist at all rationals $r \in (0,1)$ (or equivalently, that these limits exist for $\psi$). This would be in accordance with the impression given by Figure \ref{fig:psi_1_10}, but remains unproved. Supposing that these limits exist, the actual size of the jumps from left to right limit also remains unclear, as well as the relation of the left and right limits to the actual function value $h(r)$. As Figures \ref{fig:h} and \ref{fig:psi} indicate, there seem to be larger jumps at rationals with small denominators, but we are lacking a detailed understanding of this phenomenon. 

Before continuing with the proofs and the underlying heuristics, we describe the structure of the remaining parts of the paper. In Sections \ref{prod_section} and \ref{cot_section} we will introduce certain shifted trigonometric products and shifted cotangent sums, which are the key objects appearing throughout the proofs. The heuristic principles underlying the proof of the continuity problem can only be explained after introducing these auxiliary objects, and we will consequently present the heuristic reasoning in Section \ref{heu_section} below. In particular, we will show why it is necessary to have unbounded partial quotients in the continued fraction expansion of $\alpha$, as a consequence of a natural approximate ``factorization'' of $\mathbf{J}_{4_1}$ arising from the Ostrowski representation of positive integers. This factorization will be made precise in Section \ref{factorsection}. In Section \ref{tailsection} we show that the ``tail'' in this approximate factorization is (surprisingly) well-behaved, which allows us to prove Theorem \ref{th1} in Section \ref{cont_section}. In Section \ref{asy_section} we prove Theorem \ref{th3}, and in Section \ref{41_section} we prove Theorem \ref{th2}. 
\section{Trigonometric products and cotangent sums} \label{trig_section}

Throughout this paper we make heavy use of concepts and results from Diophantine approximation and the theory of continued fractions. Among the standard textbooks on these subjects are the books of Bugeaud \cite{bugeaud}, Cassels \cite{cassels}, Khinchin \cite{khinchin}, Rockett and Sz\"usz \cite{rocks} and Schmidt \cite{schmidt}. Below we state a few basic facts on continued fractions, which can all be found in each of the textbooks mentioned above.

Throughout the paper, $\alpha$ denotes a real number. If $\alpha$ is rational, then it can be represented as a (finite) continued fraction in the form 
\begin{equation} \label{cont_fr}
\alpha =  a_0 + \frac{1\kern6em}{\displaystyle
  a_1 + \frac{1\kern5em}{\displaystyle
    a_2 +\stackunder{}{\ddots\stackunder{}{\displaystyle
      {}+ \frac{1}{\displaystyle
        a_{L-1} + \frac{1}{a_L}}}}}}
\end{equation}
for a suitable number $L$ (called the length of the continued fraction), a suitable integer $a_0$, and suitable positive integers $a_1, \dots, a_L$ (which are called the partial quotients of $\alpha$). For each rational $\alpha$ there is a unique form of such a representation for which $a_L >1$, and throughout the paper it is understood that always this form of the continued fraction expansion is used. We will write all continued fractions in the form $\alpha =[a_0;a_1, a_2, \dots, a_L]$,  which is just a shorthand version of \eqref{cont_fr}. The numbers $p_{\ell}/q_{\ell}=[a_0;a_1,\dots, a_{\ell}],~ 0 \leq \ell \leq L,$ are called the (continued fraction) convergents to $\alpha$.  If $\alpha$ is irrational, then its continued fraction expansion is infinite, and we write $\alpha=[a_0;a_1,a_2,\ldots ]$ with the convention that $L=\infty$ and $q_L=\infty$. Irrationals whose sequence of partial quotients is bounded are called badly approximable; as noted before the statement of Corollary \ref{cor_th1}, the set of badly approximable numbers has vanishing Lebesgue measure (but full Hausdorff dimension).

We denote the distance from the nearest integer function by $\| \cdot \|$, and the fractional part function by $\{ \cdot \}$. Among the fundamental facts from the theory of continued fractions, which will be frequently used in this paper, are the recursive formulas 
\[ q_{\ell +1}=a_{\ell +1}q_{\ell} + q_{\ell -1} \qquad \textrm{and} \qquad \| q_{\ell +1} \alpha \|=-a_{\ell +1} \| q_{\ell} \alpha \| + \| q_{\ell -1} \alpha \|, \]
the identities
\[ q_{\ell} p_{\ell -1} - p_{\ell} q_{\ell -1} =(-1)^{\ell} \qquad \textrm{and} \qquad q_{\ell} \| q_{\ell -1} \alpha \| + q_{\ell -1} \| q_{\ell} \alpha \| =1, \]
and the estimate $1/(a_{\ell +1}+2) \le q_{\ell} \| q_{\ell} \alpha \| \le 1/a_{\ell +1}$. Furthermore, a key basic ingredient in our proofs is the fact that every integer $0 \le N <q_L$ has a unique Ostrowski expansion $N=\sum_{\ell =0}^{L-1} b_{\ell}(N) q_{\ell}$, where $0 \le b_0(N) < a_1$ and $0 \le b_{\ell}(N) \le a_{\ell +1}$ are integers which satisfy the extra rule that $b_{\ell}(N) =0$ whenever $b_{\ell +1}(N)=a_{\ell +2}$. We use the convention $b_{\ell}(N)=0$ for $\ell \ge L$.

Throughout the paper, $I_R$ denotes the indicator of a relation $R$, and $C>0$ a universal constant whose value changes from line to line. By convention, empty sums equal $0$, and empty products equal $1$.

\subsection{Shifted trigonometric products} \label{prod_section}

Throughout the paper, we write $P_N$ for the trigonometric product
\[ P_N (\alpha) := \prod_{n=1}^N |2 \sin (\pi n \alpha)| \qquad (N \in \mathbb{N}) . \]
This is called Sudler product, and has a long history going back at least to a paper of Erd\H{o}s and Szekeres \cite{ESZ}. See \cite{AB1,AB2,ATZ,GKN,GKN2,GN,GNZ,h1,h2,MV} for recent papers concerned with the (asymptotic) behavior of such products. Note that 
\[ P_N (\alpha) = \left| (1 - e^{2 \pi i \alpha}) (1 - e^{2 \pi i 2 \alpha}) \cdots (1 - e^{2 \pi i N \alpha}) \right|,\]
so that in accordance with \eqref{j41}, the value of the colored Jones polynomial at a reduced rational $p/q$ can be written as
\[ \mathbf{J}_{4_1}(p/q) = \sum_{N=0}^{q-1} P_N(p/q)^2 . \]
We will also need a shifted form of the Sudler product, namely
\[ P_N (\alpha, x) := \prod_{n=1}^N |2 \sin (\pi (n \alpha +x))| \qquad (N \in \mathbb{N}, \,\, x \in \mathbb{R}) . \]
One of our main tools is the product form \cite{AB2,GKN}
\begin{equation}\label{pnproductform}
P_N (\alpha ) = \prod_{\ell =0}^{L-1} \prod_{b=0}^{b_{\ell}(N)-1} P_{q_{\ell}} \left( \alpha, (-1)^{\ell} \frac{b q_{\ell} \| q_{\ell} \alpha \| + \varepsilon_{\ell}(N)}{q_{\ell}} \right) ,
\end{equation}
where
\begin{equation}\label{epsilondef}
\varepsilon_{\ell} (N) := q_{\ell} \sum_{m=\ell +1}^{L-1} (-1)^{\ell +m} b_m (N) \| q_m \alpha \| .
\end{equation}
In fact, $\varepsilon_{\ell}(N)$ only plays a role if $b_{\ell}(N) \ge 1$, otherwise it does not appear in the product form \eqref{pnproductform}. The following simple facts were observed in \cite{AB1,AB2}; for the sake of completeness, we include the short proof.
\begin{lem}\label{epsilonlemma} For any $0 \le \ell <L$ such that $b_{\ell} (N) \ge 1$, we have
\[ -q_{\ell} \| q_{\ell} \alpha \| + q_{\ell} \| q_{\ell +1} \alpha \| \le \varepsilon_{\ell} (N) \le q_{\ell} \| q_{\ell +1} \alpha \| . \]
If $b_{\ell +1}(N) \le (1-\delta) a_{\ell +2}$ with some $\delta>0$, then $\varepsilon_{\ell}(N) \ge -(1-\delta /3) q_{\ell} \| q_{\ell} \alpha \|$. If $b_{\ell +2}(N) \le (1-\delta) a_{\ell +3}$ with some $\delta >0$, then $\varepsilon_{\ell}(N) \le (1-\delta /3) q_{\ell} \| q_{\ell +1} \alpha \|$.
\end{lem}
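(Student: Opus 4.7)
The proof will hinge on the elementary telescoping identity $a_{m+1}\|q_m\alpha\|=\|q_{m-1}\alpha\|-\|q_{m+1}\alpha\|$ (a trivial restatement of the second recursion in the paper), applied to the alternating sum defining $\varepsilon_\ell(N)$. Separating the sum into its positive and negative parts, with sign $(-1)^{\ell+m}$, the negative terms appear at $m=\ell+1,\ell+3,\dots$ and the positive terms at $m=\ell+2,\ell+4,\dots$.

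For the upper bound $\varepsilon_\ell(N)\le q_\ell\|q_{\ell+1}\alpha\|$, I discard all negative terms and use the universal bound $b_m(N)\le a_{m+1}$ on the positive terms. The resulting sum telescopes:
\[
\sum_{m=\ell+2,\ell+4,\dots}a_{m+1}\|q_m\alpha\|=\sum_{m=\ell+2,\ell+4,\dots}\bigl(\|q_{m-1}\alpha\|-\|q_{m+1}\alpha\|\bigr)\le \|q_{\ell+1}\alpha\|.
\]
For the lower bound $\varepsilon_\ell(N)\ge q_\ell(-\|q_\ell\alpha\|+\|q_{\ell+1}\alpha\|)$, this is where the hypothesis $b_\ell(N)\ge 1$ enters through the Ostrowski constraint: it forces $b_{\ell+1}(N)\le a_{\ell+2}-1$, since $b_{\ell+1}(N)=a_{\ell+2}$ would require $b_\ell(N)=0$. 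Substituting this improved bound on the leading negative term (replacing $a_{\ell+2}\|q_{\ell+1}\alpha\|$ by $(a_{\ell+2}-1)\|q_{\ell+1}\alpha\|=\|q_\ell\alpha\|-\|q_{\ell+2}\alpha\|-\|q_{\ell+1}\alpha\|$) and then telescoping the rest as above produces exactly the additional $+\|q_{\ell+1}\alpha\|$ summand claimed.

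For the second assertion, replace the leading bound on $b_{\ell+1}(N)$ by $(1-\delta)a_{\ell+2}$ and bound all later negative terms trivially. Telescoping gives
\[
\frac{\varepsilon_\ell(N)}{q_\ell}\ge -(1-\delta)\|q_\ell\alpha\|-\delta\|q_{\ell+2}\alpha\|.
\]
The elementary inequality $\|q_{\ell+2}\alpha\|\le \|q_\ell\alpha\|/(a_{\ell+2}+1)\le \|q_\ell\alpha\|/2$, which follows directly from the identity $\|q_\ell\alpha\|=a_{\ell+2}\|q_{\ell+1}\alpha\|+\|q_{\ell+2}\alpha\|$ together with $\|q_{\ell+2}\alpha\|<\|q_{\ell+1}\alpha\|$, then yields $\varepsilon_\ell(N)/q_\ell\ge -(1-\delta/2)\|q_\ell\alpha\|$, which is stronger than the stated $-(1-\delta/3)\|q_\ell\alpha\|$. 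The third assertion is entirely symmetric: use $b_{\ell+2}(N)\le(1-\delta)a_{\ell+3}$ as the leading positive term, telescope the remaining positive terms, and invoke $\|q_{\ell+3}\alpha\|\le\|q_{\ell+1}\alpha\|/2$.

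No step poses a real obstacle; the whole argument is a bookkeeping exercise around two simple tools, namely the telescoping identity and the Ostrowski admissibility rule. The only point requiring some care is making sure the finite-versus-infinite (rational-versus-irrational $\alpha$) distinction does not spoil the telescoping. This is handled by the convention $b_m(N)=0$ for $m\ge L$ and the fact that $\|q_L\alpha\|=0$ in the rational case, so in either case the telescoping collapses to the same upper bound.
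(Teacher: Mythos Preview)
Your proof is correct and follows essentially the same approach as the paper: separate $\varepsilon_\ell(N)$ into its positive and negative parts, bound the Ostrowski digits by the relevant partial quotients (using the admissibility rule $b_{\ell+1}(N)\le a_{\ell+2}-1$ when $b_\ell(N)\ge 1$ for the lower bound), and telescope via $a_{m+1}\|q_m\alpha\|=\|q_{m-1}\alpha\|-\|q_{m+1}\alpha\|$. For the last two assertions the paper simply says ``an obvious modification of the argument''; you carry this out explicitly and in fact obtain the slightly sharper constant $\delta/2$ in place of $\delta/3$, which is fine since the paper's constant was not meant to be optimal.
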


\begin{proof} Keeping only the nonnegative terms in the definition \eqref{epsilondef} of $\varepsilon_{\ell}(N)$, we obtain the upper bound
\[ \begin{split} \varepsilon_{\ell}(N) &\le q_{\ell} \left( b_{\ell +2}(N) \| q_{\ell +2} \alpha \| + b_{\ell +4} (N) \| q_{\ell +4} \alpha \| + \cdots \right) \\ &\le q_{\ell} \left( a_{\ell +3} \| q_{\ell +2} \alpha \| + a_{\ell +5} \| q_{\ell +4} \alpha \| + \cdots \right) \\ &= q_{\ell} \left( (\| q_{\ell +1} \alpha \| - \| q_{\ell +3} \alpha \| ) + ( \| q_{\ell +3} \alpha \| - \| q_{\ell +5} \alpha \| ) +\cdots \right) \\ &= q_{\ell} \| q_{\ell +1} \alpha \| , \end{split} \]
as claimed. Similarly, keeping only the nonpositive terms and using the fact that $b_{\ell}(N) \ge 1$ implies $b_{\ell +1}(N)<a_{\ell +2}$, we obtain the lower bound
\[ \begin{split} \varepsilon_{\ell}(N) &\ge q_{\ell} \left( -b_{\ell +1}(N) \| q_{\ell +1} \alpha \| - b_{\ell +3} (N) \| q_{\ell +3} \alpha \| - \cdots \right) \\ &\ge q_{\ell} \left( -(a_{\ell +2}-1) \| q_{\ell +1} \alpha \| - a_{\ell +4} \| q_{\ell +3} \alpha \| - \cdots \right) \\ &= q_{\ell} \left( \| q_{\ell +1} \alpha \| + (\| q_{\ell +2} \alpha \| - \| q_{\ell} \alpha \| ) + ( \| q_{\ell +4} \alpha \| - \| q_{\ell +2} \alpha \| ) +\cdots \right) \\ &= -q_{\ell} \| q_{\ell} \alpha \| + q_{\ell} \| q_{\ell +1} \alpha \| , \end{split} \]
as claimed. An obvious modification of the argument leads to the last two estimates.
\end{proof}

\subsection{A shifted cotangent sum} \label{cot_section}

A close connection between the Sudler product and certain cotangent sums was first observed by Lubinsky \cite{LU}, and more recently in \cite{AB1,AB2,BD1}. In our proofs we will use a cotangent sum estimate of Lubinsky, or more precisely, a generalization of such an estimate to a shifted version of the same sum. For further related results we refer to \cite{BC2,BD2}. 
\begin{lem}\label{cotangentsumlemma} Let $1 \le \ell \le L$ and $0 \le N <q_{\ell}$. For any real $|x|<\| q_{\ell -1} \alpha \|$,
\[ \left| \sum_{n=1}^N \cot \left( \pi \left( n \alpha + x \right) \right) \right| \ll q_{\ell} \left( \frac{1}{1-\frac{|x|}{\| q_{\ell -1} \alpha \|}} + \log \max_{1 \le m \le \ell} a_m \right) . \]
For any real $|x|<1/q_{\ell}$,
\[ \left| \sum_{n=1}^N \cot \left( \pi \left( \frac{np_{\ell}}{q_{\ell}} + x \right) \right) \right| \ll q_{\ell} \left( \frac{1}{1-q_{\ell}|x|} + \log \max_{1 \le m \le \ell} a_m \right) . \]
The implied constants are universal.
\end{lem}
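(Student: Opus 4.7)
The plan is to extend Lubinsky's unshifted bound on $\sum_{n=1}^N \cot(\pi n\alpha)$ to the shifted setting. The two assertions follow the same strategy, with the parallel Diophantine inputs being $\|n\alpha\|\ge \|q_{\ell-1}\alpha\|$ for $1 \le n < q_\ell$ (best-approximation property of convergents) in the first case, and $\|np_\ell/q_\ell\| \ge 1/q_\ell$ for $1 \le n < q_\ell$ (which holds since $\gcd(p_\ell,q_\ell)=1$) in the second; I concentrate on the first inequality.

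Under the hypothesis $|x| < \|q_{\ell-1}\alpha\|$, the shifted points $\{n\alpha + x\}$ with $1 \le n \le N < q_\ell$ stay at distance at least $\|q_{\ell-1}\alpha\| - |x|$ from $\mathbb{Z}$, so the pointwise bound $|\cot(\pi(n\alpha+x))| \ll q_\ell/(1 - |x|/\|q_{\ell-1}\alpha\|)$ holds (using $\|q_{\ell-1}\alpha\|^{-1} \le 2q_\ell$). Summed trivially over all $n$ this is weaker than claimed by a factor of $q_\ell$, so genuine cancellation is essential. I would split the summation according to the three-distance theorem: sorting $\{n\alpha + x\}_{n=1}^N$ on the torus, the consecutive gaps take at most three distinct values, each bounded below by $\|q_{\ell-1}\alpha\|$. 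Consequently at most $O(1)$ of the points lie within $\|q_{\ell-1}\alpha\|$ of $\{0,1\}$, and the contribution of those ``boundary'' indices is bounded by the first term of the desired estimate via the pointwise bound.

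For the remaining ``bulk'' indices, whose shifted images lie in $[\|q_{\ell-1}\alpha\|,\,1-\|q_{\ell-1}\alpha\|]$ and automatically satisfy $|\cot(\pi(n\alpha+x))| \ll q_\ell$, I would follow Lubinsky's original unshifted argument: one compares the sum to $\int_0^1 \cot(\pi y)\, dy = 0$ by an induction on $\ell$ organised around the Ostrowski expansion and the symmetry $\cot(\pi(1-y)) = -\cot(\pi y)$, producing the $q_\ell \log\max_{m\le\ell} a_m$ summand. Since the shift $x$ merely translates all iterates uniformly and preserves the three-distance gap structure, it leaves the induction structurally intact; the only place it enters is through the boundary term already accounted for. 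The second inequality follows by the same argument with $\|q_{\ell-1}\alpha\|$ replaced by $1/q_\ell$ throughout, the input now being that the nonzero multiples of $p_\ell/q_\ell$ mod $1$ form the punctured lattice $\{1/q_\ell,\dots,(q_\ell{-}1)/q_\ell\}$.

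The main obstacle is carrying Lubinsky's induction through uniformly in the translation parameter: the pairing $y \leftrightarrow 1-y$ used to extract cancellation is no longer exact after shifting, and I expect to need a small refinement in which the $x$-dependent asymmetry is absorbed into a single boundary term controlled precisely by $|x| < \|q_{\ell-1}\alpha\|$. A secondary technical point is verifying the correct constants in the pointwise estimate so that the factor $1/(1-|x|/\|q_{\ell-1}\alpha\|)$ appears cleanly rather than, say, its square.
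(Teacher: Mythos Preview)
Your strategy---redo Lubinsky's induction in the shifted setting, isolating boundary terms via three-distance---is a plausible route, but as you yourself flag, the cancellation mechanism does not obviously survive translation, and what you describe is a plan with an acknowledged gap rather than a proof. There is also a secondary issue: once you strip out the $O(1)$ boundary indices, the remaining ``bulk'' sum is no longer over a full initial segment $1\le n\le N$, so Lubinsky's Ostrowski-based induction does not apply to it verbatim either.

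The paper bypasses all of this with a one-line idea you may have overlooked: treat Lubinsky's bound as a black box at $x=0$, and pass to general $x$ by integrating a derivative estimate. Writing $F(x)=\sum_{n=1}^N\cot(\pi(n\alpha+x))$, one has
\[
|F'(x)|=\pi\sum_{n=1}^N\frac{1}{\sin^2(\pi(n\alpha+x))}\ll\sum_{n=1}^N\frac{1}{\|n\alpha+x\|^2}
\le\frac{1}{\bigl(1-\tfrac{|x|}{\|q_{\ell-1}\alpha\|}\bigr)^2}\sum_{n=1}^{q_\ell-1}\frac{1}{\|n\alpha\|^2}
\ll\frac{q_\ell^2}{\bigl(1-\tfrac{|x|}{\|q_{\ell-1}\alpha\|}\bigr)^2},
\]
using $\|n\alpha+x\|\ge\|n\alpha\|(1-|x|/\|q_{\ell-1}\alpha\|)$ and the classical pigeonhole estimate $\sum_{n<q_\ell}\|n\alpha\|^{-2}\ll q_\ell^2$. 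Integrating from $0$ to $x$ turns the square into a first power and gives exactly the claimed bound. The second inequality then follows by letting $\alpha\to p_\ell/q_\ell$ along $[a_0;a_1,\dots,a_\ell,M]$ as $M\to\infty$, rather than by a parallel argument. This is both shorter and avoids the uniformity obstacle you identified.
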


\begin{proof} Let $F(x):=\sum_{n=1}^N \cot ( \pi ( n \alpha + x ))$ denote the shifted cotangent sum in the first formula. Lubinsky \cite[Theorem 4.1]{LU} proved the estimate
\[ |F(0)| \ll q_{\ell} \left( 1+\log \max_{1 \le m \le \ell} a_m \right) \]
for the unshifted sum. The derivative of $F(x)$ satisfies
\[ |F'(x)| = \left| \sum_{n=1}^N \frac{-\pi}{\sin^2 (\pi (n \alpha +x))} \right| \ll \sum_{n=1}^N \frac{1}{\| n \alpha +x \|^2} . \]
Since $1 \le n \le N <q_{\ell}$ by assumption, the best rational approximation property gives $\| n \alpha \| \ge \| q_{\ell -1} \alpha \|$. Thus by the triangle inequality for $\| \cdot \|$,
\[ \| n \alpha +x \| \ge \| n \alpha \| - |x| \ge \| n \alpha \| \left( 1-\frac{|x|}{\| q_{\ell -1} \alpha \|} \right) , \]
and so
\[ |F'(x)| \ll \frac{1}{\left( 1-\frac{|x|}{\| q_{\ell -1} \alpha \|} \right)^2} \sum_{n=1}^{q_{\ell}-1} \frac{1}{\| n \alpha \|^2} \ll \frac{q_{\ell}^2}{\left( 1-\frac{|x|}{\| q_{\ell -1} \alpha \|} \right)^2} . \]
The last inequality follows from a classical method based on the pigeonhole principle, see e.g.\ \cite{BO} for a detailed proof. Therefore
\[ |F(x)| \le |F(0)| + \left| \int_0^x F'(y) \, \mathrm{d}y \right| \ll q_{\ell} \left( \frac{1}{1-\frac{|x|}{\| q_{\ell -1} \alpha \|}} + \log \max_{1 \le m \le \ell} a_m \right) , \]
as claimed. The second formula of the lemma follows from the first formula applied to a suitable sequence of $\alpha$'s converging to $p_{\ell}/q_{\ell}$, say $\alpha=[a_0;a_1,\dots, a_{\ell},M]$ as $M \to \infty$.
\end{proof}

\subsection{The heuristic picture} \label{heu_section}

Let $r \in (0,1)$ be a rational with continued fraction expansion $r=[0;a_1,a_2,\dots, a_L]$. We are interested in the value of $\mathbf{J}_{4_1} (r)$ and want to relate it to $\mathbf{J}_{4_1} (1/r)$, where it plays a crucial role that the continued fraction expansion of $1/r$ very similar to the one of $r$, namely $1/r = [a_1;a_2,a_3,\dots,a_L]$. By the periodicity of the trigonometric functions involved, we can discard the integer part of $1/r$, and consider $r' := \{ 1/r \} = [0;a_2,a_3,\dots,a_L]$ instead. So we will be concerned with 
$$
h(r)=\log \frac{\mathbf{J}_{4_1} (r)}{\mathbf{J}_{4_1} (r')} ,
$$
and try to figure out how the absence of the first partial quotient $a_1$ in $r'$ affects this expression. In the end, we will let $r \to \alpha$ for some fixed irrational $\alpha$ (which is assumed to have unbounded partial quotients). 

Let $p_\ell/q_\ell,~0 \leq \ell \le L$ denote the convergents to $r$; in particular, $r=p_L/q_L$. As noted in Section \ref{prod_section}, we have
$$
\mathbf{J}_{4_1} (r) = \sum_{N=0}^{q_L-1} P_N(r)^2,
$$
where $P_N(r)$ has the factorization
\begin{equation} \label{factor_heur}
P_N(r) = \prod_{\ell =0}^{L-1} \prod_{b=0}^{b_{\ell}(N)-1} P_{q_{\ell}} \left( r, (-1)^{\ell} \frac{b q_{\ell} \| q_{\ell} r \| + \varepsilon_{\ell}(N)}{q_{\ell}} \right)
\end{equation}
in terms of the Ostrowski representation of $N$. By a rough approximation we have 
\begin{equation} \label{approx1}
q_\ell \|q_\ell r \| \approx 1/a_{\ell+1}.
\end{equation}
A general identity\footnote{We thank Michael Henry (TU Graz) for pointing out to us that this is Kubert's functional equation with parameter 1, written in multiplicative form. See \cite{Kub,Mil} for a proof and further applications.} says that for any reduced rational $p/q$,
$$
|2\sin(\pi x/q)| P_{q-1} (p/q,x/q) = |2\sin (\pi x)| .
$$
This identity suggests that since $r \approx p_{\ell}/q_{\ell}$, we can expect
\begin{equation} \label{approx3}
|2\sin(\pi x/q_{\ell})| P_{q_\ell-1} (r,x/q_{\ell}) \approx |2\sin (\pi x)|.
\end{equation}
Upon identifying  $|2 \sin(\pi x/q_{\ell})| \approx |2 \sin(\pi (q_\ell r + x/q_{\ell}))|$ as essentially being the $q_\ell$-th factor of the shifted Sudler product, we end up with
$$
P_{q_\ell} (r,x/q_{\ell}) \approx |2\sin (\pi x)|
$$
as a rough approximation. Under appropriate circumstances this is not far from the truth; cf.\ Figure 1 and Theorem 5 of \cite{AB1}. When using this heuristics in \eqref{factor_heur}, together with \eqref{approx1} and for the moment ignoring the numbers $\varepsilon_\ell(N)$, we obtain
$$
\log P_N(r)^2 \approx \sum_{\ell=0}^{L-1} \sum_{b=0}^{b_\ell(N)-1} 2 \log |2 \sin (\pi b / a_{\ell+1} )|.
$$
Interpreting the inner sum as a Riemann sum, we thus have
$$
\log P_N(r)^2 \approx \sum_{\ell=0}^{L-1} a_{\ell+1} \Psi(b_\ell(N) /a_{\ell+1}),
$$
with $\Psi(y) = 2 \int_{0}^y \log |2 \sin (\pi x)| \, \mathrm{d}x$. The function $\Psi(y)$ is maximized at $y=5/6$, reflecting the fact that $2\sin (5 \pi /6) = 1$. By \eqref{volume_4_1}, the maximal value is
$$
\Psi(5/6) = \frac{\textup{Vol}(4_1)}{2 \pi},
$$
which explains how $\textup{Vol}(4_1)$ enters into formulas such as \eqref{formula_h}.

The argument above allows us to identify those $N$ for which $P_N(r)$ is particularly large as essentially being those $N$ which have many of their Ostrowski coefficients satisfy $b_\ell \approx (5/6) a_{\ell+1}$ (in particular for those $\ell$ for which $a_{\ell+1}$ is large). Taking a sum over all $N$ we obtain
\begin{equation} \label{approx2}
\mathbf{J}_{4_1} (r) = \sum_{N=0}^{q_L-1} P_N(r)^2 \approx \sum_{(b_0, b_1,\dots,b_{L-1})} \prod_{\ell=0}^{L-1} \exp \left( a_{\ell+1} \Psi (b_\ell /a_{\ell+1}) \right) ,
\end{equation}
where the last sum is meant as a sum over all admissible Ostrowski expansions $(b_0,b_1,\dots,b_{L-1})$ of an integer $N < q_L$. We will factorize this sum over all $(b_0, b_1,\dots,b_{L-1})$ into a sum over only the first $k$ Ostrowski coefficients $(b_0,\dots,b_{k-1})$, multiplied with a sum over the remaining ones $(b_k,b_{k+1}, \dots, b_{L-1})$, and for reasons which will be explained later, we have to do so at a position $k$ such that the partial quotient $a_{k+1}$ is ``large.'' Assuming that the first and second segment of all such potential Ostrowski expansions are ``independent,'' the last expression in \eqref{approx2} should be roughly
\begin{equation}\label{approx5}
\sum_{(b_0, b_1,\dots,b_{k-1})} \prod_{\ell=0}^{k-1} \exp \left( a_{\ell+1} \Psi(b_\ell /a_{\ell+1}) \right) \times \sum_{(b_k, \dots,b_{L-1})} \prod_{\ell=k}^{L-1} \exp \left( a_{\ell+1} \Psi (b_\ell /a_{\ell+1}) \right) .
\end{equation}
Repeating this procedure for $\log P_N(r')^2$ yields a decomposition into $\sum_{(b_1, \dots, b_{k-1})}$ and $\sum_{(b_k,\dots, b_{L-1})}$ similar to the one above, but now with the ``digit'' $b_0$  missing in the first sum since the partial quotient $a_1$ is missing in the continued fraction expansion of $r'$. This extra digit in $\mathbf{J}_{4_1} (r)$ can contribute a factor of size roughly $a_1 \max_{y \in (0,1)} \Psi(y) = a_1 \textup{Vol}(4_1)/(2 \pi)$; since $a_1 \approx 1/r$ for small $r$, this explains why $h(r) \approx \textup{Vol}(4_1)/(2 \pi r)$ in first approximation. When finally considering $\log \mathbf{J}_{4_1}(r) - \log \mathbf{J}_{4_1}(r')$ as $r \to \alpha$ with some irrational $\alpha$, the effect of this extra digit $a_1$ will ``stabilize'' as $k$ increases, and the second part of \eqref{approx5} will asymptotically be the same for $\mathbf{J}_{4_1}(r)$ and $\mathbf{J}_{4_1}(r')$ since it arises from the part of the continued fraction expansion which is the same for $r$ and $r'$, thereby leading to an overall convergent behavior of $\log \mathbf{J}_{4_1}(r) - \log \mathbf{J}_{4_1}(r')$ as $r \to \alpha$.

There are many challenges when trying to implement this approach. First, \eqref{approx3} is not an equality, since $r \neq p_\ell/q_\ell$, but rather $r = p_\ell/ q_\ell + \eta_{\ell}$ for some (small) $\eta_{\ell}$. For the Sudler product we thus have, after taking logarithms,
$$
\log P_{q_\ell} (r) = \sum_{n=1}^{q_\ell} \log |2 \sin(\pi n (p_\ell / q_\ell + \eta_{\ell}))|,
$$
and a similar formula holds for the shifted Sudler products; using the linearization $\log |2 \sin(\pi n (p_\ell / q_\ell + \eta_{\ell}))| \approx \log |2 \sin(\pi n p_\ell / q_\ell)| + \pi n \cot(\pi n p_\ell/q_\ell) \eta_{\ell}$ we are led to the cotangent sums that were introduced in Section \ref{cot_section}. Detailed estimates for such cotangent sums form a key technical ingredient in this paper. 

Another critical problem is that the decomposition from line \eqref{approx2} to \eqref{approx5} is very delicate. Avoiding the coarse approximation of $P_{q_\ell}$ by $\Psi$ which led to \eqref{approx2}, but still using \eqref{approx1}, we try to factorize
\begin{equation}\label{approx6}
\sum_{N=0}^{q_L-1} \prod_{\ell =0}^{L-1} \prod_{b=0}^{b_{\ell}(N)-1} P_{q_{\ell}} \left( r, (-1)^{\ell} \frac{b/a_{\ell+1} + \varepsilon_{\ell}(N)}{q_{\ell}} \right)
\end{equation}
into a sum over all admissible Ostrowski coefficients $(b_0, \dots, b_{k-1})$ times a sum over all admissible $(b_k,\dots,b_{L-1})$. There are two crucial ``dependence'' effects to consider here. On the one hand, the ``digits'' in the Ostrowski expansion are not independent in the appropriate stochastic sense. This is in marked contrast with other numeration systems such as the decimal system, where the digits are stochastically independent. In the case of the Ostrowski numeration system the situation is made more complicated by the extra rule that $b_{\ell} =0$ whenever $b_{\ell +1}=a_{\ell +2}$, consequently the stochastic structure is described by a Markov chain (see \cite{ds} for details). Even though there is no true independence, the degree of dependence between $b_\ell$ and $b_{\ell+1}$ decreases when the maximal possible value of $b_{\ell+1}$ (i.e.\ the number $a_{\ell+2}$) becomes larger; this comes from the fact that the necessity of applying the extra rule becomes less likely. Thus the factorization of the sum in \eqref{approx6} at a certain position $k$ into two sums over $(b_0, \dots, b_{k-1})$ resp.\ $(b_k,\dots,b_{L-1})$ requires that $a_{k+1}$ (i.e.\ the maximal possible value of $b_k$) is large -- this is one place where our assumption on the existence of large partial quotients is of crucial use. 

Secondly, there is another source of dependence, which comes from the fact that the numbers $\varepsilon_\ell$ for $\ell \leq k-1$ (in the first part of the desired factorization) depend also on the values of $b_k,\dots,b_{L-1}$ (in the second part of the factorization); cf.\ the definition of $\varepsilon_\ell$ in \eqref{epsilondef}. This is \emph{not} the same effect as the one described in the previous paragraph, which was only concerned with the inherent dependence of the Ostrowski numeration system -- now we have another source of  dependence which comes from the specific definition of our products $P_N$.

The strategy for the solution is the following. The value of $\mathbf{J}_{4_1}(r)$ is a sum over $q_L$ different products $P_N(r)^2$, but only relatively few of them actually make a contribution of significant size. As indicated above, a significant contribution comes only from those numbers $N$ for which the most relevant Ostrowski coefficients, namely $b_\ell$ for which $a_{\ell+1}$ is large, satisfy $b_\ell \approx (5/6) a_{\ell+1}$. In particular, if we know that $a_{k+1}$ is large ($k$ being the index where we try to split the summation in \eqref{approx6}, as in \eqref{approx5}), then we can show that there is a significant contribution to $\mathbf{J}_{4_1}(r)$ only from those $N$ for which $b_{k}(N) \approx (5/6) a_{k+1}$. Knowing the size of $b_k$ allows us to obtain good estimates for $\varepsilon_\ell,~\ell \leq k-1$, since the effect of $b_{k+1},\dots,b_{L-1}$ on these $\varepsilon_\ell$'s can be shown to be small -- all of that provided $a_{k+1}$ is ``large,'' so again we need to use the existence of large partial quotients. 

Assume that \eqref{approx6} can thus be decomposed into
\begin{equation} \label{approx7}
\sum_{(b_0,b_1,\dots,b_{k-1})} \prod_{\ell =0}^{k-1} \prod_{b=0}^{b_{\ell}-1} P_{q_{\ell}} \left( r, (-1)^{\ell} \frac{b/a_{\ell+1} + \varepsilon_{\ell}}{q_{\ell}} \right) \times \sum_{(b_k,\dots,b_{L-1})} \prod_{\ell =k}^{L-1} \prod_{b=0}^{b_{\ell}-1} P_{q_{\ell}} \left( r, (-1)^{\ell} \frac{b/a_{\ell+1} + \varepsilon_{\ell}}{q_{\ell}} \right) ,
\end{equation}
which we can write as $A_k(r) \times B_k(r)$. In a similar way, we can decompose $\mathbf{J}_{4_1}(r')$ into $A_k(r') \times B_k(r')$, and we need to study
$$
h(r) \approx \log \frac{A_k(r) B_k(r)}{A_k(r') B_k(r')}.
$$
Assume that $r \to \alpha$ and that accordingly $r' \to \alpha' = \{1/\alpha\}$. Note that $A_k$ is composed of Sudler products $P_{q_\ell}$ for $\ell \leq k-1$. In all these products we can replace $r$ by $\alpha$ up to a very small error, so that instead of the quotient $A_k(r) / A_k(r')$, which depends on $k$ as well as on $r$, we now have a quotient which depends only on $k$ and $\alpha$. It remains to show that this quotient converges as $k \to \infty$, which essentially means that the influence of the initial partial quotient $a_1$ contributing to $A_k(r)$ ``stabilizes'' as the length of the products increases. This will follow from an application of the Cauchy convergence criterion. Furthermore, it turns out that $B_k(r)/B_k(r') \to 1$ as $r \to \alpha$ and $r' \to \alpha'$ (somewhat surprisingly without the need for any further technical assumptions on the sequence $r$). This convergence crucially relies on the fact that the continued fraction expansion of $r'$ arises from that of $r$ by a simple shift to the left, which implies that the Ostrowski numeration systems generated by $r$ and $r'$ are structurally very similar. Thus we can relate a Sudler product $P_N(r)$ for some $N$ to a corresponding Sudler product $P_{N'}(r')$ for a suitable $N'$  in such a way that the Ostrowski expansion of $N'$ (with respect to $r'$) is obtained from the one of $N$ (with respect to $r$) by a simple shift to the left. In other words, there is an (asymptotic) shift-invariance of the Sudler product, when the same shift is applied to the (continued fraction expansion of the) argument as well as to the (Ostrowski expansion of the) index. This effect is captured in Proposition \ref{tailprop} below.

We will carry out the steps sketched above in the following order. In Section \ref{sec_56} we obtain a precise version of the observation that $P_N(r)^2$ contributes significantly to $\mathbf{J}_{4_1}(r)$ only if $b_k \approx (5/6) a_{k+1}$ whenever $a_{k+1}$ is large (Proposition \ref{local5/6prop}). In Section \ref{fact_s_section} we show that the Sudler product $P_N(r)$ can be factorized into two components associated with $(b_0, \dots, b_{k-1})$ and $(b_k,\dots,b_L)$ respectively, provided that $a_{k+1}$ is large and that $b_k \approx (5/6) a_{k+1}$ (Lemma \ref{sudlerfactorlemma}). In other words, this result takes care of the dependence caused by the presence of the $\varepsilon_\ell$'s. In Section \ref{fact_k_section} we factorize $\mathbf{J}_{4_1}$ as sketched in \eqref{approx7}, again assuming that $a_{k+1}$ is large, thus eliminating the dependence caused by the Ostrowski numeration system (Proposition \ref{kashaevfactorprop}). In Section \ref{tailsection} we consider the ``tail'' part of the factorization, i.e.\ the quotient $B_k(r)/B_k(r')$ in the terminology from above. In Section \ref{zag_sec} we combine these ingredients and settle the continuity problem. 

\section{Approximate factorization of $\mathbf{J}_{4_1}$}\label{factorsection}

\subsection{The local $5/6$-principle} \label{sec_56}

In our previous paper \cite{AB1} we observed that $P_N(\alpha )$ is maximized when the overwhelming majority of the Ostrowski digits $b_{\ell}(N)$ are close to the ``optimal'' value $(5/6)a_{\ell +1}$, and found the precise asymptotics of $P_N(\alpha )$ in terms of the deviation of the Ostrowski digits from this optimum. The main results of \cite{AB1}, however, apply only under the regularity condition $(1+\log a_k) \ll a_{k+1}$ on the partial quotients of $\alpha$; most crucially, this is not satisfied by almost all $\alpha$ in the sense of Lebesgue measure, and thus such a restriction would not allow us to arrive at Corollary \ref{cor_th1}. In this paper we prove a ``local'' form of the $5/6$-principle which is concerned with the effect of one particular digit taking a value away from the optimum; our result holds without any regularity condition on the partial quotients.
\begin{prop}[Local $5/6$-principle]\label{local5/6prop} Let $0 \le k <L$ be such that $a_{k+1} \ge 7$, and set $b_k^*:=\lfloor (5/6)a_{k+1} \rfloor$. Let $0 \le N <q_L$.
\begin{enumerate}
\item[(i)] If $b_{k+1}(N)<a_{k+2}$, then $N^*=N+(b_k^*-b_k(N))q_k$ satisfies
\[ \begin{split} \log P_{N^*} &(\alpha ) - \log P_N (\alpha ) \\ & \begin{split} \ge 0.2326 \frac{(b_k^*-b_k(N))^2}{a_{k+1}} -C \bigg( &\frac{|b_k^*-b_k(N)|}{a_{k+1}} \left( 1+\log \max_{1 \le m \le k} a_m \right) \\ &+ I_{\{ b_k(N) \le 1 \}} I_{\{ b_{k+1}(N)>0.99 a_{k+2} \}} \log a_{k+2} + \frac{1}{q_k^2} \bigg) \end{split} \end{split} \]
with a universal constant $C>0$.
\item[(ii)] If $b_{k+1}(N)=a_{k+2}$, then $N^*=N+b_k^* q_k - q_{k+1}$ satisfies
\[ \begin{split} \log &P_{N^*} (\alpha ) - \log P_N (\alpha ) \\ \ge &0.1615 a_{k+1} -C \left( 1 +\log \max_{1 \le m \le k} a_m +\log a_{k+2}+I_{\{ a_{k+2}=1 \}} I_{\{ b_{k+2}(N)>0.99a_{k+3} \}} a_{k+3} \right) \end{split} \]
with a universal constant $C>0$.
\end{enumerate}
We use the convention $\log \max_{1 \le m \le k} a_m =0$ if $k=0$.
\end{prop}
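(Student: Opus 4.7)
The plan is to exploit the product formula \eqref{pnproductform} and write $\log P_{N^*}(\alpha) - \log P_N(\alpha)$ as a sum indexed by the Ostrowski levels $\ell$. For case (i), the hypothesis $b_{k+1}(N) < a_{k+2}$ ensures $N^* = N + (b_k^* - b_k(N))q_k$ has Ostrowski digits $b_\ell(N^*) = b_\ell(N)$ for $\ell \neq k$ and $b_k(N^*) = b_k^*$. From the definition \eqref{epsilondef}, $\varepsilon_\ell(N^*) = \varepsilon_\ell(N)$ for $\ell \ge k$, while $\varepsilon_\ell(N^*) - \varepsilon_\ell(N) = (-1)^{\ell+k}(b_k^* - b_k(N))q_\ell \|q_k\alpha\|$ for $\ell < k$. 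Thus levels $\ell > k$ contribute nothing, level $\ell = k$ contributes the main gain from a change in the number of factors, and levels $\ell < k$ contribute only a small shift in the argument of their shifted Sudler products.

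For the main contribution at $\ell = k$, I would approximate $\alpha$ by $p_k/q_k$ and use the Kubert-type identity $\prod_{n=0}^{q-1}|2\sin(\pi(n/q+y))| = |2\sin(\pi q y)|$ applied at $q = q_k$, $p = p_k$, combined with $q_k\|q_k\alpha\| \approx 1/a_{k+1}$, to obtain
\[ \log P_{q_k}\!\left(\alpha, (-1)^k \tfrac{b q_k\|q_k\alpha\| + \varepsilon_k(N)}{q_k}\right) \approx \log\left|2\sin(\pi b/a_{k+1})\right|, \]
with an error bounded by the shifted cotangent sum of Lemma \ref{cotangentsumlemma}. The indicator $I_{\{b_k(N)\le 1\}} I_{\{b_{k+1}(N) > 0.99 a_{k+2}\}}\log a_{k+2}$ precisely captures the degenerate regime where, by Lemma \ref{epsilonlemma}, $\varepsilon_k(N)$ approaches an extremal value and the cotangent sum picks up an extra logarithmic factor near the nearest pole. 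Interpreting $\sum_{b=0}^{b_k-1}\log|2\sin(\pi b/a_{k+1})|$ as a Riemann sum gives approximately $(a_{k+1}/2)\Psi(b_k/a_{k+1})$ with $\Psi(y) = 2\int_0^y \log|2\sin(\pi x)|\,\mathrm{d}x$. Since $\Psi$ attains its unique maximum $\Psi(5/6) = \V/(2\pi)$ and $0.2326 \cdot (5/6)^2 \approx 0.1615 \approx \V/(4\pi)$, the required quadratic lower bound reduces to a uniform inequality $\Psi(5/6) - \Psi(y) \ge 2 \cdot 0.2326\,(y-5/6)^2$ on $[0,1]$, which is tight only at the endpoints.

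For the shift error at levels $\ell < k$, I would apply the mean value theorem to $y\mapsto \log P_{q_\ell}(\alpha,y)$, whose derivative is a cotangent sum bounded by $q_\ell(1+\log\max_{m\le \ell} a_m)$ via Lemma \ref{cotangentsumlemma}. The shift magnitude $|\varepsilon_\ell(N^*) - \varepsilon_\ell(N)|/q_\ell \le |b_k^* - b_k(N)|/(a_{k+1} q_k)$ is tiny, and summing over the $b_\ell(N) \le a_{\ell+1}$ factors and over $\ell = 0,\ldots,k-1$ telescopes to the stated error $O(|b_k^* - b_k(N)|(1+\log\max_{m\le k} a_m)/a_{k+1})$. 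The $1/q_k^2$ term absorbs the error from replacing $\alpha$ by $p_k/q_k$ inside $P_{q_k}$. For case (ii), the hypothesis $b_{k+1}(N) = a_{k+2}$ together with the Ostrowski extra rule forces $b_k(N) = 0$; the modification $N^* = N + b_k^* q_k - q_{k+1}$ is a borrow realizing $b_k(N^*) = b_k^*$ and $b_{k+1}(N^*) = a_{k+2}-1$. The $\ell = k$ block thus delivers the full gain $(a_{k+1}/2)\Psi(5/6) \approx 0.1615\, a_{k+1}$, losing a single factor at level $\ell = k+1$ gives the $\log a_{k+2}$ error through Lemma \ref{cotangentsumlemma}, and the term $I_{\{a_{k+2}=1\}} I_{\{b_{k+2}(N) > 0.99 a_{k+3}\}} a_{k+3}$ handles the nested-borrow edge case where $a_{k+2}=1$ propagates the borrow one level further.

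The main obstacle will be matching the tight constant $0.2326$: this requires a Riemann-sum approximation of $\sum_{b=0}^{b_k-1}\log|2\sin(\pi b/a_{k+1})|$ by $(a_{k+1}/2)\Psi(b_k/a_{k+1})$ that is uniform in $b_k \in \{0,1,\ldots,a_{k+1}\}$ and accurate enough to preserve the $25/36$ balance between the endpoint values $b_k \in \{0, a_{k+1}\}$ and the optimum $b_k^*$, while simultaneously integrating the cotangent-sum errors from replacing $\alpha$ by $p_k/q_k$. Handling both the near-optimum quadratic behavior and the linear gain toward the endpoints with a single quadratic lower bound is the core analytic difficulty.
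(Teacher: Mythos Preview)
Your overall strategy matches the paper's: split $\log P_{N^*}-\log P_N$ by Ostrowski level, extract the main gain at level $k$ via a Riemann-sum comparison to $\Psi$, and bound the shift error at levels $\ell<k$ via cotangent sums. Your identification of the constants and of the role of the indicator terms is also correct.

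There is, however, a genuine gap in your treatment of the levels $\ell<k$. You claim that the derivative of $y\mapsto\log P_{q_\ell}(\alpha,y)$ is bounded by $q_\ell(1+\log\max_{m\le\ell}a_m)$ via Lemma~\ref{cotangentsumlemma}. But that lemma actually gives
\[
\Big|\sum_{n=1}^{q_\ell}\cot\big(\pi(n\alpha+y)\big)\Big|\ll q_\ell\Big(\tfrac{1}{1-|y|/\|q_{\ell-1}\alpha\|}+\log\max_{m\le\ell}a_m\Big),
\]
and the first term is \emph{not} uniformly bounded: the shifts $y=(-1)^\ell(bq_\ell\|q_\ell\alpha\|+\varepsilon_\ell)/q_\ell$ appearing in \eqref{pnproductform} can come arbitrarily close to $\pm\|q_{\ell-1}\alpha\|$. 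A naive mean-value argument then has to control the derivative at an intermediate $\varepsilon_\ell$ between $\varepsilon_\ell(N)$ and $\varepsilon_\ell(N^*)$. For $\ell=k-2$ with $b_k(N)$ near $a_{k+1}$, Lemma~\ref{epsilonlemma} only gives $\varepsilon_{k-2}(N)\le q_{k-2}\|q_{k-1}\alpha\|$ with no margin, so the boundary factor can be as large as $q_{k+1}/q_{k-1}$; carrying this through yields an error of order $|b_k^*-b_k(N)|$ rather than $|b_k^*-b_k(N)|/a_{k+1}$, which would destroy the main term in (i).

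The paper avoids this by two devices you should incorporate. First, it groups all $b$-factors at level $\ell$ into a single function $g_\ell(x)=\sum_{n=1}^{b_\ell(N)q_\ell}\log f(n\alpha+(-1)^\ell x/q_\ell)$ of the variable $x=\varepsilon_\ell$; since $g_\ell'$ is \emph{decreasing} (it is a cotangent sum), one has $g_\ell(\varepsilon_\ell(N^*))-g_\ell(\varepsilon_\ell(N))\ge -|g_\ell'(\varepsilon_\ell(N^*))|\cdot|\varepsilon_\ell(N^*)-\varepsilon_\ell(N)|$ regardless of which endpoint is larger, so only the $N^*$ endpoint matters. Second, because $b_k(N^*)=b_k^*\le(5/6)a_{k+1}$ is bounded away from the extremes, the refined clauses of Lemma~\ref{epsilonlemma} give improved bounds $\varepsilon_{k-2}(N^*)\le(17/18)q_{k-2}\|q_{k-1}\alpha\|$ and $\varepsilon_{k-1}(N^*)\ge -(17/18)q_{k-1}\|q_{k-1}\alpha\|$, which keep the boundary factor under control precisely at the two delicate levels $\ell=k-2,k-1$. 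With these fixes your ``telescoping'' estimate goes through. For the level-$k$ main term, the paper uses a ready-made decomposition \eqref{logpqlestimate} (from \cite{AB1}) separating $\log f$, a cotangent sum $V_k$, a logarithmic term, and a small remainder $E_k$; this is the clean way to execute your Kubert-plus-Riemann-sum idea while keeping track of the errors needed for the sharp constant $0.2326$.
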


\begin{remark} Note that in (i), $N^*$ is obtained from $N$ by replacing the Ostrowski digit $b_k(N)$ by $b_k^*$. In (ii), the assumption $b_{k+1}(N)=a_{k+2}$ forces $b_k(N)=0$, and $N^*$ is obtained from $N$ by reducing the Ostrowski digit $b_{k+1}(N)=a_{k+2}$ to $a_{k+2}-1$, and increasing $b_k(N)=0$ to $b_k^*$. In both (i) and (ii), we arrive at a valid Ostrowski expansion for $N^*$.
\end{remark}

Simply put, Proposition \ref{local5/6prop} states a Gaussian upper bound to $P_N(\alpha)/P_{N^*}(\alpha)$ in terms of the deviation of the Ostrowski digit $b_k(N)$ from the optimum $b_k^*$. In (i) resp.\ (ii), we could have used any numerical value less than
\[ \frac{9 \V}{25 \pi} =0.23260748\dots \qquad \textrm{resp.} \qquad \frac{\V}{4 \pi} = 0.16153297\dots . \]
These values are actually sharp, although they will not play any special role here. The precise decay of $P_N(\alpha)/P_{N^*}(\alpha)$ is in fact non-Gaussian \cite{AB1}, but for our purposes an upper bound will suffice.

\begin{proof}[Proof of Proposition \ref{local5/6prop}] For the sake of readability, set $f(x)=|2 \sin (\pi x)|$. We give a detailed proof for $k \ge 1$, and indicate at the end how to modify the proof for $k=0$.

Let
\[ V_{\ell}(x):= \sum_{n=1}^{q_{\ell}-1} \sin \left( \pi \frac{n \| q_{\ell} \alpha \|}{q_{\ell}} \right) \cot \left( \pi \frac{n (-1)^{\ell} p_{\ell}+x}{q_{\ell}} \right) \]
be the cotangent sum first introduced in \cite{AB1}. Observe that $V_{\ell}(x)$ is decreasing on the interval $(-1,1)$. Lemma \ref{cotangentsumlemma} and summation by parts yield
\begin{equation}\label{vlxbound}
|V_{\ell}(x)| \ll q_{\ell} \| q_{\ell} \alpha \| \left( \frac{1}{1-|x|} + \log \max_{1 \le m \le \ell} a_m \right) , \qquad x \in (-1,1).
\end{equation}
A key result in our previous paper \cite[Proposition 12]{AB1} states that for any $1 \le \ell <L$ such that $b_{\ell} (N) \ge 1$,
\begin{equation}\label{logpqlestimate}
\begin{split} \sum_{b=0}^{b_{\ell}(N)-1} \log P_{q_{\ell}} \left( \alpha, (-1)^{\ell} \frac{b q_{\ell} \| q_{\ell} \alpha \| + \varepsilon_{\ell}(N)}{q_{\ell}} \right) = &\sum_{b=1}^{b_{\ell}(N)-1} \log f(b q_{\ell} \| q_{\ell} \alpha \| + \varepsilon_{\ell}(N)) \\ &+\sum_{b=0}^{b_{\ell}(N)-1} V_{\ell} (b q_{\ell} \| q_{\ell} \alpha \| + \varepsilon_{\ell}(N)) \\ &+\log (2 \pi (b_{\ell}(N) q_{\ell} \| q_{\ell} \alpha \| + \varepsilon_{\ell}(N))) \\ &+E_{\ell}(N), \end{split}
\end{equation}
where $E_{\ell}(N) \le C/(a_{\ell +1}q_{\ell})$. A porism of \cite[Proposition 12]{AB1} shows that if in addition $b_{\ell}(N)/a_{\ell +1}$ is bounded away from $1$ and $a_{\ell +1}$ is large enough (e.g.\ $b_{\ell}(N) \le (5/6)a_{\ell +1}$ and $a_{\ell +1} \ge 7$ suffice), then we also have the lower bound $E_{\ell}(N) \ge -C (1/a_{\ell +1} + 1/q_{\ell}^2)$.\\

\noindent\textbf{(i)} Assume that $b_{k+1}(N)<a_{k+2}$, and let $N^*=N+(b_k^*-b_k(N))q_k$. In particular, $b_k(N^*)=b_k^*$ and $b_{\ell}(N^*)=b_{\ell}(N)$ for all $\ell \neq k$. By the definition \eqref{epsilondef} of $\varepsilon_{\ell}$,
\begin{equation}\label{epsilon*-epsilon(i)}
\varepsilon_{\ell}(N^*)-\varepsilon_{\ell}(N)= (-1)^{k+\ell} q_{\ell} (b_k^*-b_k(N)) \| q_k \alpha \|, \qquad 0 \le \ell <k,
\end{equation}
and $\varepsilon_{\ell}(N^*)=\varepsilon_{\ell}(N)$ for $k \le \ell <L$. The product form \eqref{pnproductform} thus gives
\begin{equation}\label{sumlogpql(i)}
\begin{split} \log P_{N^*}(\alpha) - \log P_N(\alpha ) = &\left( \sum_{b=0}^{b_k^*-1} - \sum_{b=0}^{b_k(N)-1} \right) \log P_{q_k} \left( \alpha , (-1)^k \frac{b q_k \| q_k \alpha \| + \varepsilon_k(N)}{q_k} \right) \\ & +\sum_{\ell =0}^{k-1} \left( g_{\ell} (\varepsilon_{\ell}(N^*)) - g_{\ell}(\varepsilon_{\ell}(N)) \right) , \end{split}
\end{equation}
where
\[ \begin{split} g_{\ell}(x) := &\sum_{b=0}^{b_{\ell}(N)-1} \log P_{q_{\ell}} \left( \alpha, (-1)^{\ell} \frac{b q_{\ell} \| q_{\ell} \alpha \| + x}{q_{\ell}} \right) \\ = &\sum_{b=0}^{b_{\ell}(N)-1} \sum_{n=1}^{q_{\ell}} \log f \left( n \alpha + (-1)^{\ell} \frac{b q_{\ell} \| q_{\ell} \alpha \|+x}{q_{\ell}} \right) \\ = &\sum_{b=0}^{b_{\ell}(N)-1} \sum_{n=1}^{q_{\ell}} \log f \left( (n+bq_{\ell}) \alpha + (-1)^{\ell} \frac{x}{q_{\ell}} \right) \\ = &\sum_{n=1}^{b_{\ell}(N) q_{\ell}} \log f \left( n \alpha + (-1)^{\ell} \frac{x}{q_{\ell}} \right) . \end{split} \]

We claim that the second line in \eqref{sumlogpql(i)} is negligible. Since $b_{\ell}(N) q_{\ell}<q_{\ell +1}$, Lemma \ref{cotangentsumlemma} yields that for any real $|x|<q_{\ell} \| q_{\ell} \alpha \|$,
\[ \begin{split} |g_{\ell}'(x)| &= \left| \frac{(-1)^{\ell} \pi}{q_{\ell}} \sum_{n=1}^{b_{\ell}(N) q_{\ell}} \cot \left( \pi \left( n \alpha + (-1)^{\ell} \frac{x}{q_{\ell}} \right) \right) \right| \\ &\ll \frac{q_{\ell +1}}{q_{\ell}} \left( \frac{1}{1-\frac{|x|}{q_{\ell} \| q_{\ell} \alpha \|}} +\log \max_{1 \le m \le \ell +1} a_m \right) . \end{split} \]
Note that by Lemma \ref{epsilonlemma}, $|\varepsilon_{\ell} (N^*)|<q_{\ell} \| q_{\ell} \alpha \|$ and $|\varepsilon_{\ell} (N)|<q_{\ell} \| q_{\ell} \alpha \|$. Observing that $g_{\ell}'$ is decreasing, we have
\[ g_{\ell} (\varepsilon_{\ell}(N^*)) -g_{\ell} (\varepsilon_{\ell}(N)) = \int_{\varepsilon_{\ell}(N)}^{\varepsilon_{\ell}(N^*)} g_{\ell}' (x) \, \mathrm{d} x \ge - |g_{\ell}'(\varepsilon_{\ell}(N^*))| \cdot |\varepsilon_{\ell}(N^*) - \varepsilon_{\ell}(N) | \]
regardless of whether $\varepsilon_{\ell}(N^*)$ or $\varepsilon_{\ell}(N)$ is greater. From the previous two formulas and \eqref{epsilon*-epsilon(i)} we thus deduce
\begin{equation}\label{gleps*-gleps}
g_{\ell}(\varepsilon_{\ell}(N^*)) - g_{\ell} (\varepsilon_{\ell}(N)) \ge -C \frac{|b_k^*-b_k(N)|}{a_{k+1}} \cdot \frac{q_{\ell +1}}{q_k} \left( \frac{1}{1-\frac{|\varepsilon_{\ell}(N^*)|}{q_{\ell} \| q_{\ell} \alpha \|}} +\log \max_{1 \le m \le \ell +1} a_m \right) .
\end{equation}
First, let $0 \le \ell \le k-3$. Lemma \ref{epsilonlemma} implies that here
\[ 1-\frac{|\varepsilon_{\ell}(N^*)|}{q_{\ell} \| q_{\ell} \alpha \|} \ge \min \left\{ \frac{\| q_{\ell +1} \alpha \|}{\| q_{\ell} \alpha \|}, 1- \frac{\| q_{\ell +1} \alpha \|}{\| q_{\ell} \alpha \|} \right\} \ge \frac{\| q_{\ell +2} \alpha \|}{\| q_{\ell} \alpha \|} \gg \frac{q_{\ell +1}}{q_{\ell +3}} , \]
and we obtain
\[ \begin{split} \sum_{\ell =0}^{k-3} \left( g_{\ell}(\varepsilon_{\ell}(N^*)) - g_{\ell} (\varepsilon_{\ell}(N)) \right) &\ge -C \frac{|b_k^*-b_k(N)|}{a_{k+1}} \cdot \frac{1}{q_k} \sum_{\ell =0}^{k-3} q_{\ell +3} \left( 1+\log \max_{1 \le m \le \ell +1} a_m \right) \\ &\ge -C \frac{|b_k^*-b_k(N)|}{a_{k+1}} \left( 1+\log \max_{1 \le m \le k-2} a_m \right) . \end{split} \]
Next, consider the $\ell =k-2$ term. Since $b_k^* \le (5/6) a_{k+1}$, Lemma \ref{epsilonlemma} gives the better upper bound $\varepsilon_{k-2}(N^*) \le (17/18) q_{k-2} \| q_{k-1} \alpha \|$. Therefore
\[ 1-\frac{|\varepsilon_{k-2}(N^*)|}{q_{k-2} \| q_{k-2} \alpha \|} \ge \min \left\{ \frac{\| q_{k-1} \alpha \|}{\| q_{k-2} \alpha \|}, 1-\frac{17 \| q_{k-1} \alpha \|}{18 \| q_{k-2} \alpha \|} \right\} \ge \frac{\| q_{k-1} \alpha \|}{18 \| q_{k-2} \alpha \|} \gg \frac{q_{k-1}}{q_k} , \]
and \eqref{gleps*-gleps} leads to
\[ g_{k-2}(\varepsilon_{k-2}(N^*)) - g_{k-2} (\varepsilon_{k-2}(N)) \ge -C \frac{|b_k^*-b_k(N)|}{a_{k+1}} \left( 1+\log \max_{1 \le m \le k-1} a_m \right) . \]
Finally, consider the $\ell =k-1$ term. Lemma \ref{epsilonlemma} now gives the better lower bound $\varepsilon_{k-1}(N^*) \ge -(17/18)q_{k-1} \| q_{k-1} \alpha \|$. The assumption $a_{k+1} \ge 7$ ensures that $q_{k-1} \| q_k \alpha \| \le (17/18) q_{k-1} \| q_{k-1} \alpha \|$. Therefore
\[ 1-\frac{|\varepsilon_{k-1}(N^*)|}{q_{k-1} \| q_{k-1} \alpha \|} \ge \frac{1}{18}, \]
and \eqref{gleps*-gleps} yields
\[ g_{k-1}(\varepsilon_{k-1}(N^*)) - g_{k-1} (\varepsilon_{k-1}(N)) \ge -C \frac{|b_k^*-b_k(N)|}{a_{k+1}} \left( 1+\log \max_{1 \le m \le k} a_m \right) . \]
We have thus proved that
\[ \sum_{\ell =0}^{k-1} \left( g_{\ell} (\varepsilon_{\ell}(N^*)) - g_{\ell}(\varepsilon_{\ell}(N)) \right) \ge -C \frac{|b_k^*-b_k(N)|}{a_{k+1}} \left( 1+\log \max_{1 \le m \le k} a_m \right) , \]
and it remains to estimate the first line in \eqref{sumlogpql(i)}.

Applying \eqref{logpqlestimate} to both $N$ and $N^*$ gives
\[ \begin{split} \left( \sum_{b=0}^{b_k^*-1} - \sum_{b=0}^{b_k(N)-1} \right) \log P_{q_k} \bigg( \alpha, (-1)^{k} &\frac{b q_k \| q_k \alpha \| + \varepsilon_k (N)}{q_k} \bigg) \\ = &\left( \sum_{b=1}^{b_k^*-1} - \sum_{b=1}^{b_k(N)-1} \right) \log f(b q_k \| q_k \alpha \| + \varepsilon_k (N)) \\ &+\left( \sum_{b=0}^{b_k^*-1} - \sum_{b=0}^{b_k(N)-1} \right) V_k (b q_k \| q_k \alpha \| + \varepsilon_k (N)) \\ &+\log \frac{b_k^* q_k \| q_k \alpha \| + \varepsilon_k (N)}{b_k(N) q_k \| q_k \alpha \| + \varepsilon_k (N)} \\ &+E_k (N^*) - E_k(N) , \end{split} \]
provided that $b_k(N) \ge 1$; an obviously modified formula holds when $b_k(N)=0$. According to the porism mentioned after \eqref{logpqlestimate}, the assumption $a_{k+1} \ge 7$ ensures that $E_k(N^*)\ge -C (1/a_{k+1}+1/q_k^2)$, thus
\[ E_k (N^*) -E_k (N) \ge -C \left( \frac{1}{a_{k+1}} + \frac{1}{q_k^2} \right) . \]
It is also easy to see that
\[ \log \frac{b_k^* q_k \| q_k \alpha \| + \varepsilon_k (N)}{b_k(N) q_k \| q_k \alpha \| + \varepsilon_k (N)} \ge -C \frac{|b_k^*-b_k(N)|}{a_{k+1}} . \]
Formula \eqref{vlxbound} and the facts that $V_k(x)$ is decreasing and that $b_k^* q_k \| q_k \alpha \| + \varepsilon_k(N)$ is bounded away from $1$ show that
\[ \begin{split} \left( \sum_{b=0}^{b_k^*-1} - \sum_{b=0}^{b_k(N)-1} \right) V_k (b q_k \| q_k \alpha \| + \varepsilon_k (N)) &\ge -|b_k^*-b_k(N)| \cdot |V_k (b_k^* q_k \| q_k \alpha \| + \varepsilon_k (N))| \\ &\ge -C \frac{|b_k^*-b_k(N)|}{a_{k+1}} \left( 1+\log \max_{1 \le m \le k} a_m \right) \end{split} \]
regardless of whether $b_k(N)$ or $b_k^*$ is greater. Finally, we compare the sum of $\log f(b q_k \| q_k \alpha \| + \varepsilon_k (N))$ to the corresponding Riemann integral. Note that $b q_k \| q_k \alpha \| + \varepsilon_k (N)=b/a_{k+1}+O(1/a_{k+1})$, and in particular $b_k^* q_k \| q_k \alpha \| + \varepsilon_k (N) =5/6+O(1/a_{k+1})$.

Assume first, that $|b_k^*-b_k(N)| \le a_{k+1}/100$. Then for all $b$ between $b_k^*$ and $b_k(N)$, the points $b q_k \| q_k \alpha \| + \varepsilon_k (N)$ are bounded away from $0$ and $1$. Since $\log f(5/6)=0$, each term also satisfies $|\log f(b q_k \| q_k \alpha \| + \varepsilon_k )| \ll |b_k^*-b_k(N)|/a_{k+1}$. We thus obtain
\[ \begin{split} \left( \sum_{b=1}^{b_k^*-1} - \sum_{b=1}^{b_k(N)-1} \right) \log f(b q_k \| q_k \alpha \| &+ \varepsilon_k (N)) \\ &= a_{k+1} \int_{b_k(N)/a_{k+1}}^{b_k^*/a_{k+1}} \log f(x) \, \mathrm{d} x + O \left( \frac{|b_k^*-b_k(N)|}{a_{k+1}} \right) . \end{split} \]
The concavity of $\log f(x)$ implies that
\[ \inf_{\substack{y \in [0,1] \\ y \neq 5/6}} \frac{1}{(5/6-y)^2} \int_y^{5/6} \log f(x) \, \mathrm{d}x = \frac{1}{(5/6)^2} \int_0^{5/6} \log f(x) \, \mathrm{d}x = \frac{9 \V}{25 \pi} . \]
The first line in \eqref{sumlogpql(i)} thus satisfies
\[ \begin{split} \left( \sum_{b=0}^{b_k^*-1} \right. &- \left. \sum_{b=0}^{b_k(N)-1} \right) \log P_{q_k} \left( \alpha, (-1)^{k} \frac{b q_k \| q_k \alpha \| + \varepsilon_k (N)}{q_k} \right) \\ &\ge \frac{9\V}{25 \pi} \cdot \frac{(b_k^*-b_k(N))^2}{a_{k+1}} -C \left( \frac{|b_k^*-b_k(N)|}{a_{k+1}} \left( 1 +\log \max_{1 \le m \le k} a_m \right) + \frac{1}{q_k^2} \right) , \end{split} \]
and the claim follows.

Next, assume that $|b_k^*-b_k(N)|>a_{k+1}/100$. By Lemma \ref{epsilonlemma},
\[ \begin{split} b q_k \| q_k \alpha \| + \varepsilon_k (N) &\le (a_{k+1}-1) q_k \| q_k \alpha \| + q_k \| q_{k+1} \alpha \| \le 1-\frac{1}{a_{k+1}+2}, \\ b q_k \| q_k \alpha \| + \varepsilon_k (N) &\ge q_k \| q_{k+1} \alpha \| \gg \frac{1}{a_{k+1} a_{k+2}} . \end{split} \]
Hence each term satisfies $|\log f(b q_k \| q_k \alpha \| + \varepsilon_k (N))| \ll \log (a_{k+1} a_{k+2})$, and by comparing the sum to the corresponding integral we obtain
\[ \begin{split} \left( \sum_{b=1}^{b_k^*-1} - \sum_{b=1}^{b_k(N)-1} \right) \log f(b q_k \| q_k \alpha \| + \varepsilon_k (N)) &= a_{k+1} \int_{b_k(N)/a_{k+1}}^{b_k^*/a_{k+1}} \log f(x) \, \mathrm{d} x + O \left( \log (a_{k+1} a_{k+2}) \right) \\ &\ge \frac{9 \V}{25 \pi} \cdot \frac{(b_k^*-b_k(N))^2}{a_{k+1}} - C \log (a_{k+1} a_{k+2}) . \end{split} \]
If $b_k(N) \ge 2$, then the term $b=1$ does not appear in the previous formula, and we have the better lower bound
\[ b q_k \| q_k \alpha \| + \varepsilon_k (N) \ge 2 q_k \| q_k \alpha \| + \varepsilon_k (N) \gg \frac{1}{a_{k+1}} . \]
If $b_{k+1}(N) \le 0.99 a_{k+2}$, then Lemma \ref{epsilonlemma} gives the better lower bound
\[ b q_k \| q_k \alpha \| + \varepsilon_k (N) \ge 0.001 q_k \| q_k \alpha \| \gg \frac{1}{a_{k+1}} . \]
Therefore in these two cases the error term $\log a_{k+2}$ can be removed. The first line in \eqref{sumlogpql(i)} thus satisfies
\[ \begin{split} \left( \sum_{b=0}^{b_k^*-1} \right. -& \left. \sum_{b=0}^{b_k(N)-1} \right) \log P_{q_k} \left( \alpha, (-1)^{k} \frac{b q_k \| q_k \alpha \| + \varepsilon_k (N)}{q_k} \right) \\ \ge &\frac{9\V}{25 \pi} \cdot \frac{(b_k^*-b_k(N))^2}{a_{k+1}} \\ &-C \left( 1+ \log a_{k+1} + I_{\{ b_k(N) \le 1 \}} I_{\{ b_{k+1}(N) >0.99 a_{k+2} \}} \log a_{k+2} +\log \max_{1 \le m \le k} a_m + \frac{1}{q_k^2} \right) . \end{split} \]
After an arbitrarily small reduction in the value of $9\V /(25 \pi)$, the error term $\log a_{k+1}$ can be removed, and the claim follows. This finishes the proof of (i) when $k \ge 1$.\\

\noindent\textbf{(ii)} Assume that $b_{k+1}(N)=a_{k+2}$ (hence $b_k(N)=0$), and let $N^*=N+b_k^* q_k - q_{k+1}$. In particular, $b_k(N^*)=b_k^*$, $b_{k+1}(N^*)=a_{k+2}-1$ and $b_{\ell}(N^*)=b_{\ell}(N)$ for all $\ell \neq k,k+1$. By the definition \eqref{epsilondef} of $\varepsilon_{\ell}$,
\begin{equation}\label{epsilon*-epsilon(ii)}
\begin{split} \varepsilon_{\ell}(N^*) - \varepsilon_{\ell}(N) &= (-1)^{k+\ell} q_{\ell} (b_k^* \| q_k \alpha \| + \| q_{k+1} \alpha \| ) , \qquad 0 \le \ell \le k-1, \\ \varepsilon_k (N^*) - \varepsilon_k(N) &= q_k \| q_{k+1} \alpha \| , \end{split}
\end{equation}
and $\varepsilon_{\ell}(N^*)=\varepsilon_{\ell}(N)$ for $k+1 \le \ell <L$. The product form \eqref{pnproductform} thus gives
\begin{equation}\label{sumlogpql(ii)}
\begin{split} \log P_{N^*}(\alpha ) - &\log P_N (\alpha ) = \\ &-\log P_{q_{k+1}} \left( \alpha, (-1)^{k+1} \frac{(a_{k+2}-1) q_{k+1} \| q_{k+1} \alpha \| + \varepsilon_{k+1}(N)}{q_{k+1}} \right) \\ &+ \sum_{b=0}^{b_k^*-1} \log P_{q_k} \left( \alpha, (-1)^k \frac{b q_k \| q_k \alpha \| + \varepsilon_k (N^*)}{q_k} \right) \\ &+ \sum_{\ell =0}^{k-1} \left( g_{\ell} (\varepsilon_{\ell}(N^*)) - g_{\ell} (\varepsilon_{\ell}(N)) \right) , \end{split}
\end{equation}
where $g_{\ell}(x)$ is as before. The same arguments as in the proof of (i) yield
\[ \sum_{\ell =0}^{k-1} \left( g_{\ell} (\varepsilon_{\ell}(N^*)) - g_{\ell} (\varepsilon_{\ell}(N)) \right) \ge -C \left( 1+\log \max_{1 \le m \le k} a_m \right) \]
and
\[ \begin{split} \sum_{b=0}^{b_k^*-1} \log P_{q_k} &\left( \alpha, (-1)^k \frac{b q_k \| q_k \alpha \| + \varepsilon_k (N^*)}{q_k} \right) \\ &\ge \frac{\V}{4 \pi} a_{k+1} -C \left( 1+\log (a_{k+1} a_{k+2}) + \log \max_{1 \le m \le k} a_m + \frac{1}{q_k^2} \right) ,  \end{split} \]
and it remains to estimate the first line in \eqref{sumlogpql(ii)}.

A special case of a general estimate in our previous paper \cite[Proposition 11 (ii)]{AB1} states that the point $x=(a_{k+2}-1) q_{k+1} \| q_{k+1} \alpha \| +\varepsilon_{k+1}(N)$ satisfies
\[ \begin{split} \log P_{q_{k+1}} &\left( \alpha, (-1)^{k+1} \frac{x}{q_{k+1}} \right) \\ &\le \log \left( f \left( \| q_{k+1} \alpha \| +x/q_{k+1} \right) \frac{f(x)}{f(x/q_{k+1})} \right) + V_{k+1}(x) + \frac{C}{a_{k+2}^2 q_{k+1}} , \end{split} \]
with the convention that $f(x)/f(x/q_{k+1})=q_{k+1}$ in case $x=0$. Here
\[ \log \left( f \left( \| q_{k+1} \alpha \| +x/q_{k+1} \right) \frac{f(x)}{f(x/q_{k+1})} \right) \le \log \left( f \left( \| q_{k+1} \alpha \| +x/q_{k+1} \right) q_{k+1} \right) \le C. \]
Assume first, that $x \ge 0$. Then \eqref{vlxbound} and the fact that $V_{k+1}$ is decreasing give
\[ V_{k+1}(x) \le V_{k+1}(0) \le C \left( 1+\log \max_{1 \le m \le k+1} a_m \right) . \]
Next, assume that $x<0$. Then $a_{k+2}=1$. The general identity from the theory of continued fractions
\[ \frac{1}{q_{k+1} \| q_{k+1} \alpha \|} = [a_{k+2}; a_{k+3}, \dots ] + [0;a_{k+1}, a_k, \dots ] \ge 1+\frac{1}{a_{k+3}+1}  \]
and Lemma \ref{epsilonlemma} show that
\[ \varepsilon_{k+1} (N) \ge -q_{k+1} \| q_{k+1} \alpha \| \ge - \frac{a_{k+3}+1}{a_{k+3}+2} . \]
Formula \eqref{vlxbound} now gives
\[ V_{k+1}(x) \le C \left( \frac{1}{1+x} + \log \max_{1 \le m \le k+1} a_m \right) \le C \left( a_{k+3} + \log \max_{1 \le m \le k+1} a_m  \right) . \]
On the other hand, if $b_{k+2}(N) \le 0.99 a_{k+3}$, then Lemma \ref{epsilonlemma} yields the better lower bound $\varepsilon_{k+1}(N) \ge -0.999 q_{k+1} \| q_{k+1} \alpha \|$, and \eqref{vlxbound} similarly leads to
\[ V_{k+1}(x) \le C \left( 1 + \log \max_{1 \le m \le k+1} a_m \right) . \]
The first line in \eqref{sumlogpql(ii)} thus satisfies
\[ \begin{split} -\log P_{q_{k+1}} &\left( \alpha, (-1)^{k+1} \frac{(a_{k+2}-1) q_{k+1} \| q_{k+1} \alpha \| + \varepsilon_{k+1}(N)}{q_{k+1}} \right) \\ &\ge -C \left( 1 + I_{\{ a_{k+2}=1 \}} I_{\{ b_{k+2}(N)>0.99 a_{k+3} \}} a_{k+3} + \log \max_{1 \le m \le k+1} a_m  \right) . \end{split} \]
After an arbitrarily small reduction in the value of $\V /(4 \pi)$, the error term $\log a_{k+1}$ can be removed, and the claim follows. This finishes the proof of (ii) when $k \ge 1$.\\

We now indicate how to modify the proof for $k=0$. In (i) resp.\ (ii) formula \eqref{sumlogpql(i)} resp.\ \eqref{sumlogpql(ii)} still hold, with $\sum_{\ell=0}^{k-1} \left( g_{\ell} (\varepsilon_{\ell}(N^*)) - g_{\ell}(\varepsilon_{\ell}(N)) \right) =0$ being an empty sum. Since now $q_k=q_0=1$, we have $P_{q_k}(\alpha, x)=f(\alpha +x)$. Instead of applying \eqref{logpqlestimate} to $N$ and $N^*$, we can simply use
\[ \begin{split} \left( \sum_{b=0}^{b_k^*-1} - \sum_{b=0}^{b_k(N)-1} \right) &\log P_{q_k} \left( \alpha, (-1)^k \frac{bq_k \| q_k \alpha \| + \varepsilon_{k}(N)}{q_k} \right) \\ &= \left( \sum_{b=0}^{b_k^*-1} - \sum_{b=0}^{b_k(N)-1} \right) \log f ((b+1) \alpha + \varepsilon_0(N) ) , \end{split} \]
and compare the right hand side to the corresponding Riemann integral as in the case $k \ge 1$ above. This finishes the proof of (i) and (ii) when $k=0$.
\end{proof}

\begin{cor}\label{5/6corollary} Let $0 \le k <K \le L$ be such that
\begin{equation}\label{ak+1condition}
\frac{1+\log \max_{1 \le m \le k}a_m}{\sqrt{a_{k+1} \log (1+a_{k+1})}} \le A
\end{equation}
with a suitably small universal constant $A>0$, and set $b_k^*:=\lfloor (5/6)a_{k+1} \rfloor$. Then
\begin{equation}\label{cor1}
\sum_{\substack{0 \le N <q_K \\ |b_k(N)-b_k^*| \ge 10 \sqrt{a_{k+1} \log a_{k+1}}}} P_N (\alpha )^2 \le a_{k+1}^{-20} \sum_{0 \le N < q_K} P_N (\alpha )^2 ,
\end{equation}
and also
\begin{equation}\label{cor2}
\sum_{\substack{0 \le N <q_K \\ b_0(N)=b_1(N)=\cdots =b_{k-1}(N)=0 \\ |b_k(N)-b_k^*| \ge 10 \sqrt{a_{k+1} \log a_{k+1}}}} P_N (\alpha )^2 \le a_{k+1}^{-20} \sum_{\substack{0 \le N < q_K \\ b_0(N)=b_1(N)=\cdots =b_{k-1}(N)=0}} P_N (\alpha )^2 .
\end{equation}
\end{cor}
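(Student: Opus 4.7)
The plan is to associate every ``bad'' index (meaning $N$ with $|b_k(N)-b_k^*|\ge 10\sqrt{a_{k+1}\log a_{k+1}}$) with a ``good'' companion $N^*$ having $b_k(N^*)=b_k^*$ and
\[
P_N(\alpha)^2 \le a_{k+1}^{-40}\, P_{N^*}(\alpha)^2,
\]
in such a way that the multiplicity of the map $N\mapsto N^*$ is at most $O(a_{k+1})$. Summing the pointwise estimate then yields \eqref{cor1}, and since the construction will only modify $b_k$ (and in one subcase $b_{k+1}$), leaving the digits $b_\ell$ for $\ell<k$ unchanged, the identical reasoning furnishes \eqref{cor2}.

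In the generic case $b_{k+1}(N)<a_{k+2}$ with the $\log a_{k+2}$ indicator from Proposition \ref{local5/6prop}(i) inactive, define $N^*=N+(b_k^*-b_k(N))q_k$ as in part (i). Under hypothesis \eqref{ak+1condition} with $A$ sufficiently small, the doubled Gaussian gain $0.4652(b_k(N)-b_k^*)^2/a_{k+1}\ge 46.52\log a_{k+1}$ dominates the remaining error, which by \eqref{ak+1condition} is at most
\[
C|b_k(N)-b_k^*|(1+\log\max_{1\le m\le k}a_m)/a_{k+1} \le CA|b_k(N)-b_k^*|\sqrt{\log(1+a_{k+1})/a_{k+1}},
\]
bounded via $|b_k(N)-b_k^*|\ge 10\sqrt{a_{k+1}\log a_{k+1}}$. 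The $1/q_k^2$ term is negligible. The case $b_{k+1}(N)=a_{k+2}$ is handled analogously using part (ii), whose linear gain $0.323\,a_{k+1}$ dominates the corresponding $\log\max_{1\le m\le k}a_m$ error after choosing $A$ small enough to force $a_{k+1}$ large.

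The main obstacle is the non-generic subcase in which $b_k(N)\le 1$ and $0.99a_{k+2}<b_{k+1}(N)<a_{k+2}$, since then the $\log a_{k+2}$ penalty in Proposition \ref{local5/6prop}(i) is active and involves a partial quotient beyond level $k+1$, which is \emph{not} controlled by \eqref{ak+1condition}. The plan to overcome this is a two-step modification. First, because $|b_{k+1}(N)-\lfloor 5a_{k+2}/6\rfloor|\ge 0.157\,a_{k+2}$, apply Proposition \ref{local5/6prop}(i) at level $k+1$ to replace $b_{k+1}(N)$ by $\lfloor 5a_{k+2}/6\rfloor$, producing an intermediate index $N'$; the Gaussian gain of order $a_{k+2}$ at this step easily absorbs both the $\log a_{k+2}$ penalty and the $\log\max_{m\le k+1}a_m$ error (note that the condition $0.99a_{k+2}<b_{k+1}(N)$ already forces $a_{k+2}\gtrsim 100$). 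Second, since $b_{k+1}(N')<0.99\,a_{k+2}$, apply Proposition \ref{local5/6prop}(i) at level $k$ to $N'$ to obtain $N^*$, now falling under the generic case. The enlarged multiplicity $O(a_{k+1}a_{k+2})$ is absorbed by the factor $e^{-ca_{k+2}}$ from the first step, because $a_{k+2}e^{-ca_{k+2}}$ is uniformly bounded. The analogous $a_{k+3}$-indicator arising in part (ii) is dispatched by the same two-level device. Combining the three cases furnishes the required uniform pointwise bound, and summing yields both \eqref{cor1} and \eqref{cor2}.
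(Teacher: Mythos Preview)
Your strategy—pairing each bad $N$ with a companion $N^*$ via Proposition~\ref{local5/6prop}, controlling the multiplicity of the map, and summing—is exactly the paper's approach, and your idea of a preliminary application at level $k+1$ (or $k+2$) to neutralize the uncontrolled $\log a_{k+2}$ and $a_{k+3}$ errors is also the paper's idea. However, two of your justifications are not sound as written.

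First, in your case $b_{k+1}(N)=a_{k+2}$ you invoke Proposition~\ref{local5/6prop}(ii) and assert that the gain $0.323\,a_{k+1}$ dominates ``the corresponding $\log\max_{1\le m\le k}a_m$ error'', but the error in (ii) also contains a $\log a_{k+2}$ term which you never address. Nothing in \eqref{ak+1condition} bounds $a_{k+2}$, so when $a_{k+2}\ge e^{ca_{k+1}}$ this term swamps the gain and the argument collapses. The fix is the same two-step device you use for the non-generic subcase: first bring $b_{k+1}$ down to $b_{k+1}^*$ via part (i) at level $k+1$, then modify $b_k$.

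Second, in your non-generic subcase you claim that the level-$(k{+}1)$ Gaussian gain of order $a_{k+2}$ ``easily absorbs'' the $\log\max_{m\le k+1}a_m$ error from that step. This is false in general: that error includes $\log a_{k+1}$, and if (say) $a_{k+2}=200$ while $a_{k+1}=10^{100}$ the level-$(k{+}1)$ step by itself produces a \emph{loss}, not a gain, so there is no factor $e^{-ca_{k+2}}$ available to absorb the multiplicity. The two-step procedure does still succeed here, but only because the level-$k$ step contributes a gain of order $a_{k+1}$ that overwhelms both the multiplicity $a_{k+1}a_{k+2}$ and any loss incurred at the first step; you must combine the two steps before comparing gain against error, not treat them in isolation.

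The paper avoids both pitfalls by organizing the case split around the \emph{sizes} of $a_{k+2}$ and $a_{k+3}$ relative to $a_{k+1}$ (through an auxiliary constant $B$), which guarantees in advance that the problematic error terms in each case are dominated. Your digit-based split can be salvaged, but it requires the more careful combined-gain bookkeeping indicated above.
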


\begin{proof} Let $S:=\sum_{0 \le N <q_K} P_N(\alpha )^2$, and consider the sets
\[ \begin{split} H_{\ell}(b):= &\left\{ 0 \le N <q_K \, : \, b_{\ell}(N)=b \right\} , \\ H_{\ell ,\ell'}(b,c):= &\left\{ 0 \le N <q_K \, : \, b_{\ell}(N)=b, \,\, b_{\ell'}(N)=c \right\} . \end{split} \]
Let $B>0$ be a suitably small universal constant. We consider 3 cases depending on the sizes of $a_{k+2}$ and $a_{k+3}$.\\

\noindent\textbf{Case 1.} Assume that $a_{k+2}>B a_{k+1}$. For any $0.99 a_{k+2} < b \le a_{k+2}$, the map $N \mapsto N+(b_{k+1}^*-b)q_{k+1}$ is an injection from $H_{k+1}(b)$ to $H_{k+1}(b_{k+1}^*)$; here as before $b_{k+1}^*:=\lfloor (5/6)a_{k+2} \rfloor$. Choosing $A$ small enough in terms of $B$, condition \eqref{ak+1condition} ensures that $a_{k+2}$ dominates $\log \max_{1 \le m \le k+1} a_m$. Since $0.2326 \cdot 0.99^2>0.2279$, Proposition \ref{local5/6prop} (i) applied with $k+1$ thus shows that
\[ \sum_{N \in H_{k+1}(b)} P_N(\alpha )^2 \le \sum_{N \in H_{k+1}(b_{k+1}^*)} \exp (-0.2279 a_{k+2}) P_N (\alpha )^2 \le \exp (-0.2279 a_{k+2}) S . \]
Summing over $0.99 a_{k+2} < b \le a_{k+2}$ leads to
\[ \sum_{\substack{0 \le N < q_K \\ b_{k+1}(N)>0.99 a_{k+2}}} P_N (\alpha )^2 \le a_{k+2} \exp (-0.2279 a_{k+2}) S \le a_{k+1}^{-100} S . \]
For any $b,c$ with $|b-b_k^*| \ge 10 \sqrt{a_{k+1} \log a_{k+1}}$ and $0 \le c \le 0.99 a_{k+2}$, the map $N \mapsto N+(b_k^*-b)q_k$ is an injection from $H_{k,k+1}(b,c)$ to $H_{k,k+1}(b_k^*,c)$. Condition \eqref{ak+1condition} ensures that the main term
\[ 0.2326 \frac{(b_k^*-b)^2}{a_{k+1}} \ge 23.26 \log a_{k+1} \]
dominates the error term in Proposition \ref{local5/6prop} (i), thus
\[ \sum_{N \in H_{k,k+1}(b,c)} P_N (\alpha )^2 \le \sum_{N \in H_{k,k+1}(b_k^*,c)} a_{k+1}^{-23.26} P_N(\alpha )^2 . \]
Summing over $b,c$ leads to
\[ \sum_{\substack{0 \le N < q_K \\ |b_k(N)-b_k^*| \ge 10 \sqrt{a_{k+1} \log a_{k+1}} \\ b_{k+1}(N) \le 0.99 a_{k+2}}} P_N (\alpha )^2 \le a_{k+1}^{-22.26} S , \]
and \eqref{cor1} follows.\\

\noindent\textbf{Case 2.} Assume that $a_{k+2}\le B a_{k+1}$ and $a_{k+3}>B a_{k+1}$. Choosing $A$ small enough in terms of $B$ ensures that $a_{k+3}$ dominates $\log \max_{1 \le m \le k+2} a_m$. Proposition \ref{local5/6prop} (i) applied with $k+2$ now leads to
\[ \sum_{\substack{0 \le N <q_K \\ b_{k+2}(N)>0.99a_{k+3}}} P_N(\alpha )^2 \le a_{k+3} \exp (-0.2279 a_{k+3}) S \le a_{k+1}^{-100} S . \]
Since the error term $\log a_{k+2}$ is negligible, using Proposition \ref{local5/6prop} (i) and (ii) we similarly deduce
\[ \sum_{\substack{0 \le N <q_K \\ |b_k(N)-b_k^*| \ge 10 \sqrt{a_{k+1} \log a_{k+1}} \\ b_{k+2}(N) \le 0.99 a_{k+3}}} P_N(\alpha )^2 \le a_{k+1}^{-22.26} S , \]
and \eqref{cor1} follows.\\

\noindent\textbf{Case 3.} Assume that $a_{k+2} \le B a_{k+1}$ and $a_{k+3} \le B a_{k+1}$. Choosing $B$ small enough, the error terms $\log a_{k+2}$ and $a_{k+3}$ are now negligible. Proposition \ref{local5/6prop} (i) and (ii) directly give
\[ \sum_{\substack{0 \le N <q_K \\ |b_k(N)-b_k^*| \ge 10 \sqrt{a_{k+1} \log a_{k+1}}}} P_N(\alpha )^2 \le a_{k+1}^{-22.26} S, \]
as claimed in \eqref{cor1}.\\

A straightforward modification of the proof leads to \eqref{cor2}.
\end{proof}

\subsection{Factoring the Sudler product} \label{fact_s_section}

\begin{lem}\label{sudlerfactorlemma} Let $1 \le k <L$ be such that $a_{k+1} \ge 150$, and set $b_k^*:= \lfloor (5/6) a_{k+1} \rfloor$. Let $0 \le N<q_L$, and set $N_1:= \sum_{\ell =0}^{k-1} b_{\ell}(N) q_{\ell}$ and $N_2:= \sum_{\ell =k}^{L-1} b_{\ell}(N) q_{\ell}$. If $|b_k(N)-b_k^*| \le a_{k+1}/10$, then
\[ P_N(\alpha ) = P_{N_1}\left( \alpha, (-1)^k \frac{5/6}{q_k} \right) P_{N_2}(\alpha ) \exp \left( O \left( \frac{|b_k(N)-b_k^*|+1}{a_{k+1}} \left( 1+\log \max_{1 \le m \le k} a_m \right) \right) \right) \]
with a universal implied constant.
\end{lem}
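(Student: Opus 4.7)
The plan is to start from the exact identity
\[ P_N(\alpha) = P_{N_2}(\alpha) \cdot P_{N_1}(\alpha, N_2\alpha), \]
obtained by splitting $\prod_{n=1}^N |2\sin(\pi n\alpha)|$ into its first $N_2$ factors and its remaining $N_1$ factors and then re-indexing $n \mapsto n-N_2$ in the second block to produce the shift $N_2\alpha$. This reduces the lemma to proving
\[ \log P_{N_1}(\alpha, N_2\alpha) - \log P_{N_1}\bigl(\alpha, (-1)^k \tfrac{5/6}{q_k}\bigr) \ll \frac{|b_k(N)-b_k^*|+1}{a_{k+1}}\bigl(1+\log\max_{1\le m\le k}a_m\bigr), \]
which I plan to obtain in two stages: first showing that the two shifts are close modulo $1$, then controlling the variation of the shifted Sudler product across them via the cotangent sum bound of Lemma \ref{cotangentsumlemma}.

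For the first stage, I would expand $N_2\alpha \equiv \sum_{\ell\ge k}(-1)^\ell b_\ell(N)\|q_\ell\alpha\|\pmod{1}$ using $q_\ell\alpha\equiv(-1)^\ell\|q_\ell\alpha\|\pmod{1}$ and separate the $\ell=k$ term from the tail. The tail $\sum_{\ell\ge k+1}(-1)^\ell b_\ell(N)\|q_\ell\alpha\|$ equals, up to sign, $(-1)^k \varepsilon_k(N)/q_k$, which by Lemma \ref{epsilonlemma} has modulus at most $\|q_k\alpha\|\le 1/(a_{k+1}q_k)$. For the leading term, combining the standard bound $q_k\|q_k\alpha\|=1/(a_{k+1}+\theta)$ with $\theta\in[0,2]$ and $b_k^*=\lfloor(5/6)a_{k+1}\rfloor$ yields $|b_k^*\|q_k\alpha\|-(5/6)/q_k|\ll 1/(a_{k+1}q_k)$, while $|(b_k(N)-b_k^*)\|q_k\alpha\||\le |b_k(N)-b_k^*|/(a_{k+1}q_k)$. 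The hypotheses $a_{k+1}\ge 150$ and $|b_k(N)-b_k^*|\le a_{k+1}/10$ force $b_k(N)\ge 110$, so Lemma \ref{epsilonlemma} does apply. Altogether, an appropriate integer translate $\delta$ of $N_2\alpha-(-1)^k(5/6)/q_k$ satisfies $|\delta|\ll(|b_k(N)-b_k^*|+1)/(a_{k+1}q_k)$.

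In the second stage I would set $F(x)=\log P_{N_1}(\alpha,x)$ and apply the mean value theorem. Since $F'(x)=\pi\sum_{n=1}^{N_1}\cot(\pi(n\alpha+x))$ and $N_1<q_k$, Lemma \ref{cotangentsumlemma} gives $|F'(x)|\ll q_k\bigl(1/(1-|x|/\|q_{k-1}\alpha\|)+\log\max_{m\le k}a_m\bigr)$ whenever $|x|<\|q_{k-1}\alpha\|$. The identity $q_k\|q_{k-1}\alpha\|+q_{k-1}\|q_k\alpha\|=1$ together with $q_{k-1}\|q_k\alpha\|\le 1/a_{k+1}\le 1/150$ yields $\|q_{k-1}\alpha\|\ge 149/(150 q_k)$, and a careful tracking of the constants in the first stage shows that $|x|\le(5/6)/q_k+|\delta|$ stays bounded away from $\|q_{k-1}\alpha\|$ throughout the integration segment. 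Hence $|F'(x)|\ll q_k(1+\log\max_{m\le k}a_m)$, and multiplying by $|\delta|$ produces the desired error bound. The main obstacle, and the reason the hypothesis $a_{k+1}\ge 150$ is considerably stronger than the $a_{k+1}\ge 7$ appearing in Proposition \ref{local5/6prop}, is precisely this last point: one must keep $(5/6)/q_k$ plus an $O(1/a_{k+1})$ perturbation strictly inside the ``safe zone'' $|x|<\|q_{k-1}\alpha\|$ where the cotangent sum bound is valid, and only a large lower bound on $a_{k+1}$ forces $\|q_{k-1}\alpha\|$ close enough to $1/q_k$ to leave the needed slack.
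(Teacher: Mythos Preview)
Your proposal is correct and follows essentially the same route as the paper: both start from the exact identity $P_N(\alpha)=P_{N_2}(\alpha)\,P_{N_1}(\alpha,N_2\alpha)$, identify $N_2\alpha\equiv(-1)^k(b_k(N)q_k\|q_k\alpha\|+\varepsilon_k(N))/q_k$, show this differs from $(-1)^k(5/6)/q_k$ by $O((|b_k(N)-b_k^*|+1)/(a_{k+1}q_k))$, and then control the variation of the shifted Sudler product via Lemma~\ref{cotangentsumlemma}, using $q_k\|q_{k-1}\alpha\|\ge 149/150>5/6$ to keep the shift in the admissible range. The only difference is packaging: the paper writes the ratio as $\prod_{n=1}^{N_1}|1+R+Q_n|$ via the sine addition formula and then bounds $\sum Q_n$, $\sum Q_n^2$, $N_1R$, $N_1R^2$ separately through $e^{t-2t^2}\le 1+t\le e^t$, whereas you apply the mean value theorem directly to $x\mapsto\log P_{N_1}(\alpha,x)$; your version is a bit cleaner but the underlying computation is the same.
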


\begin{remark} Note that $N_1$ contains the initial segment of the Ostrowski digits of $N$, and $N_2$ contains the final segment. Roughly speaking, the lemma says that we can decompose $P_N$ into two products $P_{N_1}$ and $P_{N_2}$ which only depend on the Ostrowski representation of $N$ up to digit $k-1$, resp.\ only on the Ostrowski representation from the $k$-th digit onwards; the error in this decomposition is small provided that $a_{k+1}$ is large and $b_k$ is close to $b_k^*$ . From Corollary \ref{5/6corollary} we know that whenever $a_{k+1}$ is large, then only those $N$ with $b_k \approx b_k^*$ make a significant contribution towards the value of $\mathbf{J}_{4_1}$. Together Corollary \ref{5/6corollary} and Lemma \ref{sudlerfactorlemma} will allow us to obtain the desired factorization of $\mathbf{J}_{4_1}$ in the next section.
\end{remark}

\begin{proof}[Proof of Lemma \ref{sudlerfactorlemma}] We write $f(x):= |2 \sin (\pi x)|$. Using
\[ N_2 \alpha \equiv (-1)^k \frac{b_k(N) q_k \| q_k \alpha \| + \varepsilon_k(N)}{q_k} \pmod{1}, \]
we deduce
\[ \begin{split} P_N (\alpha ) &= P_{N_2}(\alpha ) \prod_{n=1}^{N_1} f (n \alpha + N_2 \alpha ) \\ &= P_{N_2}(\alpha ) \prod_{n=1}^{N_1} f \left( n \alpha + (-1)^k \frac{b_k(N) q_k \| q_k \alpha \| + \varepsilon_k (N)}{q_k} \right) . \end{split} \]
It will thus be enough to estimate
\begin{equation}\label{prodR+Qn}
\frac{P_N(\alpha )}{P_{N_1}\left( \alpha, (-1)^k \frac{5/6}{q_k} \right) P_{N_2}(\alpha )} = \prod_{n=1}^{N_1} \frac{f \left( n \alpha + (-1)^k \frac{b_k(N) q_k \| q_k \alpha \| + \varepsilon_k (N)}{q_k} \right)}{f \left( n \alpha + (-1)^k \frac{5/6}{q_k} \right)} = \prod_{n=1}^{N_1} |1+R+Q_n|,
\end{equation}
where, by trigonometric identities,
\[ \begin{split} R:= &\cos \left( \pi (-1)^k \frac{b_k(N) q_k \| q_k \alpha \| -5/6 + \varepsilon_k(N)}{q_k} \right) -1, \\ Q_n := &\sin \left( \pi (-1)^k \frac{b_k(N) q_k \| q_k \alpha \| -5/6 + \varepsilon_k(N)}{q_k} \right) \cot \left( \pi \left( n \alpha + (-1)^k \frac{5/6}{q_k} \right) \right) .  \end{split} \]

Clearly,
\[ 0 \ge b_k^* q_k \| q_k \alpha \| - \frac{5}{6} \ge \frac{(5/6)a_{k+1}-1}{a_{k+1}+2} - \frac{5}{6} \ge - \frac{8/3}{a_{k+1}}, \]
hence the assumptions $a_{k+1} \ge 150$, $|b_k(N)-b_k^*| \le a_{k+1}/10$ and Lemma \ref{epsilonlemma} lead to
\[ \begin{split} \frac{|b_k(N) q_k \| q_k \alpha \| -5/6 + \varepsilon_k(N)|}{q_k} &\le \frac{|b_k(N)-b_k^*| q_k \| q_k \alpha \| + |b_k^* q_k \| q_k \alpha \| -5/6| + |\varepsilon_k(N)|}{q_k} \\ &\le \frac{|b_k(N)-b_k^*|+11/3}{a_{k+1} q_k} \\ &\le \frac{28}{225 q_k} . \end{split} \]
By the general inequality $1-\cos (\pi t) \le (\pi^2 /2) t^2$, we thus have
\[ |R| \le \frac{\pi^2}{2} \left( \frac{|b_k(N)-b_k^*|+11/3}{a_{k+1} q_k} \right)^2  \le \frac{\pi^2}{2} \left( \frac{28}{225 q_k} \right)^2 < \frac{0.08}{q_k^2} . \]
Observe also that for all $1 \le n \le N_1$,
\[ \left\| n \alpha + (-1)^k \frac{5/6}{q_k} \right\| = \left\| \frac{np_k}{q_k} + (-1)^k \frac{5/6+n \| q_k \alpha \|}{q_k} \right\| \ge  \left( \frac{1}{6} - \frac{1}{a_{k+1}} \right) \left\| \frac{np_k}{q_k} \right\| \ge \frac{4}{25} \left\| \frac{np_k}{q_k} \right\| . \]
Therefore by the general inequality $|\cot (\pi t)| \le 1/(\pi \| t \|)$,
\[ |Q_n| \le \pi \frac{|b_k(N)-b_k^*|+11/3}{a_{k+1} q_k} \cdot \frac{25}{4 \pi \| np_k/q_k \|} \le \frac{28}{225 q_k} \cdot \frac{25}{4 \| np_k/q_k \|} < 0.78 . \]

In particular, $|R+Q_n| \le 0.86$, and so each factor $1+R+Q_n$ in \eqref{prodR+Qn} is bounded away from zero. Using the fact that $e^{t-2t^2} \le 1+t \le e^t$ for $|t| \le 0.86$ we obtain
\[ \exp \left( \sum_{n=1}^{N_1} (R+Q_n) - \sum_{n=1}^{N_1} 2(R+Q_n)^2 \right) \le \prod_{n=1}^{N_1} |1+R+Q_n| \le \exp \left( \sum_{n=1}^{N_1} (R+Q_n) \right) . \]
Here
\[ N_1 |R| \ll \frac{|b_k(N)-b_k^*|+1}{a_{k+1}} \qquad \textrm{and} \qquad N_1 R^2 \ll \frac{|b_k(N)-b_k^*|+1}{a_{k+1}} \]
are negligible, and so is
\[ \sum_{n=1}^{N_1} Q_n^2 \ll \frac{|b_k(N)-b_k^*|+1}{a_{k+1}} \sum_{n=1}^{q_k-1} \frac{1}{q_k^2 \| np_k/q_k \|^2} \ll \frac{|b_k(N)-b_k^*|+1}{a_{k+1}} . \]
Finally, the fact $q_k \| q_{k-1} \alpha \| \ge 1-1/a_{k+1} \ge 149/150>5/6$ and Lemma \ref{cotangentsumlemma} yield

\[ \begin{split} \left| \sum_{n=1}^{N_1} Q_n \right| &\ll \frac{|b_k(N)-b_k^*|+1}{a_{k+1}q_k} \left| \sum_{n=1}^{N_1} \cot \left( \pi \left( n \alpha + (-1)^k \frac{5/6}{q_k} \right) \right) \right| \\ &\ll \frac{|b_k(N)-b_k^*|+1}{a_{k+1}} \left( 1 + \log \max_{1 \le m \le k} a_m \right) , \end{split} \]
and the claim follows.
\end{proof}

\subsection{Factoring $\mathbf{J}_{4_1}$} \label{fact_k_section}

We now prove the key result of this section. In the special case of $\alpha \in \mathbb{Q}$ and $K=L$, the following proposition states an approximate factorization of $\mathbf{J}_{4_1}(\alpha)$ with a negligible multiplicative error provided that $a_{k+1}$ dominates $a_1, \dots, a_k$. Observe that the first factor in this factorization depends only on $a_1, \dots, a_k$; this will play a crucial role in the proof of Theorem \ref{th1}.
\begin{prop}\label{kashaevfactorprop} Let $1 \le k<K \le L$ be such that
\[ \xi_k := \frac{\sqrt{\log (1+a_{k+1})}}{\sqrt{a_{k+1}}} \left( 1 +\log \max_{1 \le m \le k} a_m \right) \le A \]
with a suitably small universal constant $A>0$. Then
\[ \begin{split} \sum_{0 \le N <q_K} P_N(\alpha )^2 = &\left( \sum_{0 \le N <q_k} P_N \left( \frac{p_k}{q_k}, (-1)^k \frac{5/6}{q_k} \right)^2 \right) \left( \sum_{\substack{0 \le N < q_K \\ b_0(N)=b_1(N)=\cdots =b_{k-1}(N)=0}} P_N(\alpha )^2 \right) \\ &\times \left( 1+O(\xi_k) \right) \end{split} \]
with a universal implied constant.
\end{prop}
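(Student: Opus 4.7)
The strategy is to use Corollary \ref{5/6corollary}(i) to restrict the sum to those $N$ with $b_k(N)$ close to $b_k^*:=\lfloor (5/6)a_{k+1}\rfloor$, apply Lemma \ref{sudlerfactorlemma} to each surviving term, split the resulting double sum into the two independent halves $N_1,N_2$ of the Ostrowski representation, replace $\alpha$ by $p_k/q_k$ in the finite-length factor involving $N_1$, and finally remove the restriction on $b_k$ via Corollary \ref{5/6corollary}(ii). The hypothesis $\xi_k\le A$ with $A$ small forces $a_{k+1}$ to dominate $1+\log\max_{1\le m\le k}a_m$; in particular $a_{k+1}\ge 150$ and $10\sqrt{a_{k+1}\log a_{k+1}}\le a_{k+1}/10$, so both Corollary \ref{5/6corollary} and Lemma \ref{sudlerfactorlemma} apply, and $a_{k+1}^{-20}\ll\xi_k$.

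\textbf{Reduction and splitting.} Corollary \ref{5/6corollary}(i) lets us restrict to $N$ satisfying $|b_k(N)-b_k^*|\le 10\sqrt{a_{k+1}\log a_{k+1}}$ at multiplicative cost $1+O(\xi_k)$. For these $N$ we have $0<b_k(N)<a_{k+1}$, so the Ostrowski admissibility rule does not couple the digits across position $k$: writing $N=N_1+N_2$ with $N_1:=\sum_{\ell<k}b_\ell(N)q_\ell<q_k$ and $N_2:=\sum_{\ell\ge k}b_\ell(N)q_\ell$, the admissibility constraints decouple into internal constraints on $N_1$ and on $N_2$. Lemma \ref{sudlerfactorlemma} then yields
\[ P_N(\alpha)^2 \;=\; P_{N_1}\!\left(\alpha,(-1)^k\frac{5/6}{q_k}\right)^{\!2} P_{N_2}(\alpha)^2\cdot\bigl(1+O(\xi_k)\bigr), \]
since the error exponent $\frac{|b_k(N)-b_k^*|+1}{a_{k+1}}(1+\log\max_{1\le m\le k}a_m)$ is $O(\xi_k)$ uniformly on the restricted range. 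The restricted sum therefore factorizes, up to $1+O(\xi_k)$, as the product of $\sum_{N_1=0}^{q_k-1}P_{N_1}(\alpha,(-1)^k(5/6)/q_k)^2$ and a sum over admissible $N_2$ with $b_0=\cdots=b_{k-1}=0$ and $|b_k-b_k^*|\le 10\sqrt{a_{k+1}\log a_{k+1}}$. Corollary \ref{5/6corollary}(ii) then lets us drop the restriction on $b_k$ in the second factor at further cost $1+O(a_{k+1}^{-20})\subset 1+O(\xi_k)$, producing the target second factor $\sum_{\substack{0\le N<q_K\\ b_0=\cdots=b_{k-1}=0}}P_N(\alpha)^2$.

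\textbf{Replacement of $\alpha$ by $p_k/q_k$ and main obstacle.} The delicate step is to prove, uniformly in $N_1<q_k$, that
\[ P_{N_1}\!\left(\alpha,(-1)^k\frac{5/6}{q_k}\right)^{\!2} \;=\; P_{N_1}\!\left(\frac{p_k}{q_k},(-1)^k\frac{5/6}{q_k}\right)^{\!2}\cdot\bigl(1+O(\xi_k)\bigr), \]
which by positivity then transfers to the sum over $N_1$. Writing $\alpha=p_k/q_k+\eta$ with $|\eta|\le 1/(q_kq_{k+1})$ and $x:=(-1)^k(5/6)/q_k$, the shift $|x|=5/(6q_k)$ guarantees $\|np_k/q_k+x\|\ge 1/(6q_k)$, whence $|\cot(\pi(np_k/q_k+x))|\ll q_k$ and $|n\eta\cot(\pi(np_k/q_k+x))|\ll 1/a_{k+1}$. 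The trigonometric expansion $\log[f(a+b)/f(a)]=\pi b\cot(\pi a)+O((b\cot(\pi a))^2)$, where $f(x)=|2\sin(\pi x)|$, linearizes the log-ratio as
\[ \log\frac{P_{N_1}(\alpha,x)}{P_{N_1}(p_k/q_k,x)} \;=\; \pi\eta\sum_{n=1}^{N_1}n\cot(\pi(np_k/q_k+x))+O(1/a_{k+1}^2). \]
The weight $n$ inside the cotangent sum forces partial summation; combined with the second estimate of Lemma \ref{cotangentsumlemma} (valid since $q_k|x|=5/6$), which gives $|\sum_{n=1}^m\cot(\pi(np_k/q_k+x))|\ll q_k(1+\log\max_{1\le m\le k}a_m)$ uniformly in $m<q_k$, this yields $|\sum_n n\cot(\pi(np_k/q_k+x))|\ll q_k^2(1+\log\max_{1\le m\le k}a_m)$. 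Multiplying by $|\eta|\le 1/(q_kq_{k+1})$ produces a main term of size $(1+\log\max_{1\le m\le k}a_m)/a_{k+1}=O(\xi_k)$, and the quadratic error is $O(1/a_{k+1}^2)=O(\xi_k)$. Assembling the five reductions gives the proposition. This replacement step is the principal obstacle: the specific shift $5/(6q_k)$ (keeping the arguments bounded away from integers) and the partial summation against the shifted cotangent sum of Lemma \ref{cotangentsumlemma} are both essential to obtain the required uniform-in-$N_1$ bound, without which the contribution to the log-ratio could not be absorbed into the error $O(\xi_k)$.
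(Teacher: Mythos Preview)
Your proposal is correct and follows essentially the same route as the paper: restrict via Corollary~\ref{5/6corollary}, factor via Lemma~\ref{sudlerfactorlemma} using the bijection $N\mapsto(N_1,N_2)$, drop the restriction on $b_k$ via Corollary~\ref{5/6corollary} again, and replace $\alpha$ by $p_k/q_k$ in the $N_1$-sum. The one substantive difference is in the replacement step: the paper invokes the ``transfer principle'' \cite[Proposition~11(i)]{AB1}, which packages the linearization and its error as
\[
\log \frac{P_{N_1}(\alpha,(-1)^k(5/6)/q_k)}{P_{N_1}(p_k/q_k,(-1)^k(5/6)/q_k)}
=\sum_{n=1}^{N_1}\sin\!\left(\pi\frac{n\|q_k\alpha\|}{q_k}\right)\cot\!\left(\pi\frac{n(-1)^kp_k+5/6}{q_k}\right)+O(a_{k+1}^{-2}),
\]
whereas you redo this by direct Taylor expansion. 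Since $\sin(\pi n\|q_k\alpha\|/q_k)\approx \pi n\eta$, the two main terms coincide and the subsequent partial summation against Lemma~\ref{cotangentsumlemma} is identical. One small point: your claim that the aggregate quadratic error is $O(a_{k+1}^{-2})$ needs the observation $\sum_{n<q_k}\|np_k/q_k+x\|^{-2}\ll q_k^2$, not merely the uniform per-term bound $|n\eta\cot|\ll 1/a_{k+1}$ (which alone would yield $O(q_k/a_{k+1}^2)$ after summing); this is easy but should be stated.
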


\begin{proof} Corollary \ref{5/6corollary} gives
\[ \sum_{0 \le N <q_K} P_N(\alpha )^2 = \left( 1 +O \left( a_{k+1}^{-20} \right) \right) \sum_{\substack{0 \le N < q_K \\ |b_k(N)-b_k^*|<10 \sqrt{a_{k+1} \log a_{k+1}}}} P_N(\alpha )^2 . \]
Let us now apply Lemma \ref{sudlerfactorlemma} to each term of the sum on the right hand side. Letting $N_1, N_2$ be as in Lemma \ref{sudlerfactorlemma}, observe that the map $N \mapsto (N_1,N_2)$ is a bijection from
\[ \left\{ 0 \le N < q_K \, : \, |b_k(N)-b_k^*| < 10 \sqrt{a_{k+1} \log a_{k+1}} \right\} \]
to the product set
\[ [0,q_k) \times \left\{ 0 \le N < q_K \, : \, b_0(N)=\cdots =b_{k-1}(N)=0, \,\,\, |b_k(N)-b_k^*|< 10 \sqrt{a_{k+1} \log a_{k+1}} \right\} . \]
This leads to the factorization
\[ \begin{split} \sum_{0 \le N <q_K} P_N(\alpha )^2 = &\left( \sum_{0 \le N <q_k} P_N \left( \alpha , (-1)^k \frac{5/6}{q_k} \right)^2 \right) \left( \sum_{\substack{0 \le N < q_K \\ b_0(N)=b_1(N)=\cdots =b_{k-1}(N)=0 \\ |b_k(N)-b_k^*|<10 \sqrt{a_{k+1} \log a_{k+1}}}} P_N(\alpha )^2 \right) \\ &\times \left( 1+O(\xi_k) \right) . \end{split} \]
Corollary \ref{5/6corollary} shows that the condition $|b_k(N)-b_k^*|<10 \sqrt{a_{k+1} \log a_{k+1}}$ can be removed from the second sum, and it remains to replace $\alpha$ by $p_k/q_k$ in the first sum on the right hand side.

The so-called transfer principle for shifted Sudler products \cite[Proposition 11 (i)]{AB1} shows that for any $0 \le N <q_k$,
\[ \log \frac{P_N(\alpha, (-1)^k (5/6)/q_k)}{P_N(p_k/q_k , (-1)^k (5/6)/q_k)} = \sum_{n=1}^N \sin \left( \pi \frac{n \| q_k \alpha \|}{q_k} \right) \cot \left( \pi \frac{n (-1)^k p_k+5/6}{q_k} \right) + O (a_{k+1}^{-2}) . \]
Lemma \ref{cotangentsumlemma} and summation by parts yield
\[ \left| \sum_{n=1}^N \sin \left( \pi \frac{n \| q_k \alpha \|}{q_k} \right) \cot \left( \pi \frac{n (-1)^k p_k+5/6}{q_k} \right) \right| \ll q_k \| q_k \alpha \| \left( 1+\log \max_{1 \le m \le k} a_m \right) \ll \xi_k . \]
Hence for all $0 \le N <q_k$,
\[ P_N \left( \alpha, (-1)^k \frac{5/6}{q_k} \right) = P_N \left( \frac{p_k}{q_k}, (-1)^k \frac{5/6}{q_k} \right) \left( 1+O (\xi_k ) \right) , \]
and the claim follows.
\end{proof}

\section{A tail estimate}\label{tailsection}

Throughout this section, let $0< \alpha <1$ be a real number, and let $\alpha':= \{ 1/\alpha \}$. We write their continued fraction expansions as in Section \ref{trig_section}: $\alpha=[0;a_1,a_2,\dots, a_L]$ and $\alpha'=[0;a_2,a_3,\dots, a_L]$ in the rational case, whereas $\alpha=[0;a_1,a_2, \dots ]$ and $\alpha'=[0;a_2,a_3,\dots ]$ in the irrational case. Let $p_{\ell}/q_{\ell}=[0;a_1,a_2,\dots , a_{\ell}]$ and $p_{\ell}'/q_{\ell}'=[0;a_2,a_3,\dots, a_{\ell}]$ be the convergents to $\alpha$ and $\alpha'$, respectively. In particular,
\begin{equation}\label{ql'pl'formula}
q_{\ell}'=p_{\ell} \qquad \textrm{and} \qquad p_{\ell}'=q_{\ell}-a_1 p_{\ell} .
\end{equation}
To any $0 \le N <q_L$ whose Ostrowski expansion $N=\sum_{\ell=0}^{L-1}b_{\ell}(N) q_{\ell}$ with respect to $\alpha$ satisfies $b_1(N)<a_2$, let us associate $0 \le N' < q_L'$ defined as
\[ N':=\sum_{\ell =1}^{L-1} b_{\ell}(N) q_{\ell}' . \]
Observe that this is a valid Ostrowski expansion of $N'$ with respect to $\alpha'$. We will apply the mapping $N \mapsto N'$ three times, namely once in the proof of Theorem \ref{quantitativecontinuitytheorem} and twice in the proof of Theorem \ref{hasymptoticstheorem}, and we will always ensure that it is only applied to numbers $N$ for which indeed $b_1(N) < a_2$ (the domain and codomain of the mapping will be chosen in such a way that the mapping is bijective).

The product form \eqref{pnproductform} then gives
\begin{equation}\label{pn/pn'productform}
\frac{P_N(\alpha )}{P_{N'}(\alpha' )} = \prod_{b=0}^{b_0(N)-1} P_{q_0} \left( \alpha, \frac{b q_0 \| q_0 \alpha \| + \varepsilon_0 (N)}{q_0} \right) \prod_{\ell =1}^{L-1} \prod_{b=0}^{b_{\ell}(N)-1} \frac{P_{q_{\ell}} \left( \alpha, (-1)^{\ell} \frac{b q_{\ell} \| q_{\ell} \alpha \| + \varepsilon_{\ell}(N)}{q_{\ell}} \right)}{P_{q_{\ell}'} \left( \alpha', (-1)^{\ell -1} \frac{b q_{\ell}' \| q_{\ell}' \alpha' \| + \varepsilon_{\ell}'(N)}{q_{\ell}'} \right)} ,
\end{equation}
where $\varepsilon_{\ell}'(N):=q_{\ell}' \sum_{m=\ell+1}^{L-1} (-1)^{\ell+m-1} b_m (N) \| q_m' \alpha' \|$. The main result of this section is a tail estimate for the previous formula.
\begin{prop}\label{tailprop} Let $1 \le \ell <L$, and assume the following two conditions:
\begin{enumerate}
\item[(i)] $a_{\ell +1} \le (q_{\ell}')^{1/100}$ or $b_{\ell}(N) \le 0.99 a_{\ell +1}$,
\item[(ii)] $a_{\ell +2} \le (q_{\ell +1}')^{1/100}$ or $b_{\ell +1}(N) \le 0.99 a_{\ell +2}$.
\end{enumerate}
Then
\[ \prod_{b=0}^{b_{\ell}(N)-1} \frac{P_{q_{\ell}} \left( \alpha, (-1)^{\ell} \frac{b q_{\ell} \| q_{\ell} \alpha \| + \varepsilon_{\ell}(N)}{q_{\ell}} \right)}{P_{q_{\ell}'} \left( \alpha', (-1)^{\ell -1} \frac{b q_{\ell}' \| q_{\ell}' \alpha' \| + \varepsilon_{\ell}'(N)}{q_{\ell}'} \right)} = \exp \left( O \left( \frac{(a_2+\cdots +a_{\ell})^{3/4}}{(q_{\ell}')^{3/4}} + \frac{\log (a_1+1)}{q_{\ell}'} \right) \right) \]
with a universal implied constant.
\end{prop}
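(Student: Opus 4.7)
I would aim to show each factor on the left is close to $1$ individually and then sum the resulting logarithmic errors. The key tools are the transfer principle from \cite[Proposition 11 (i)]{AB1} (already invoked in the proof of Proposition \ref{kashaevfactorprop}), the Kubert-type identity $P_q(p/q, y/q) = |2\sin(\pi y)|$ valid for $\gcd(p,q)=1$, and the fact that the continued fraction expansions of $\alpha$ and $\alpha' = \{1/\alpha\}$ agree past the first partial quotient.

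First, apply the transfer principle to write $P_{q_\ell}(\alpha, \cdot) = P_{q_\ell}(p_\ell/q_\ell, \cdot) \exp(T_\ell)$ and $P_{q_\ell'}(\alpha', \cdot) = P_{q_\ell'}(p_\ell'/q_\ell', \cdot) \exp(T'_\ell)$, with $T_\ell, T'_\ell$ sine-weighted cotangent sums controllable via Lemma \ref{cotangentsumlemma}. The Kubert identity then collapses each rational-base product to a single sine: the numerator becomes $|2\sin(\pi y_b)|$ with $y_b := b q_\ell \|q_\ell \alpha\| + \varepsilon_\ell(N)$, and the denominator $|2\sin(\pi z_b)|$ with $z_b := b q_\ell'\|q_\ell' \alpha'\| + \varepsilon_\ell'(N)$, the signs $(-1)^\ell$ and $(-1)^{\ell-1}$ being absorbed by the absolute values. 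The classical formula
\[
(q_\ell\|q_\ell\alpha\|)^{-1} = [a_{\ell+1}; a_{\ell+2}, \dots, a_L] + [0; a_\ell, a_{\ell-1}, \dots, a_1]
\]
and its analogue for $q_\ell'\|q_\ell'\alpha'\|$ (in which the backward tail ends in $a_2$ rather than $a_1$) differ only in the innermost entry, so $q_\ell\|q_\ell\alpha\| - q_\ell'\|q_\ell'\alpha'\| = O(1/(q_\ell')^2)$ up to an $a_1$-dependent factor. A parallel term-by-term comparison of the series defining $\varepsilon_\ell(N)$ in \eqref{epsilondef} and its $\alpha'$-analogue bounds the shift-term discrepancy analogously. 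Taylor expanding $\log|2\sin\pi y_b| - \log|2\sin\pi z_b|$ and summing over $b$ using Lemma \ref{cotangentsumlemma}, together with conditions (i) and (ii) which via Lemma \ref{epsilonlemma} keep $y_b, z_b$ bounded away from integers so the cotangents remain tame, then yields the stated estimate.

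The main obstacle is the fractional exponent $3/4$: the per-$b$ discrepancy $|y_b - z_b|$ contains a piece linear in $b$ coming from $b \cdot (q_\ell\|q_\ell\alpha\| - q_\ell'\|q_\ell'\alpha'\|)$, so summing over $b \le b_\ell(N) \le a_{\ell+1}$ naively yields a bound of order $a_{\ell+1}^2/(q_\ell')^2$, while the cotangent weights are most dangerous near $b = 0$ or $b = b_\ell(N)$. I expect the $3/4$ exponent to arise from a H\"older-type split of the $b$-range, balancing a bulk contribution (where the cotangents are bounded) against a boundary regime (where they are large but few $b$ contribute); the optimal split yields the $(a_2+\cdots+a_\ell)^{3/4}/(q_\ell')^{3/4}$ shape, while the additive $\log(a_1+1)/q_\ell'$ term captures the direct contribution of removing $a_1$ from the backward continued fractions. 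Keeping all the cotangent sums under control, as guaranteed by hypotheses (i) and (ii) via Lemma \ref{epsilonlemma}, is the main technical subtlety.
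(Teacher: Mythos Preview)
Your plan has a genuine gap at the very first step. Transferring the numerator and denominator separately to their rational approximations $p_\ell/q_\ell$ and $p_\ell'/q_\ell'$ produces two transfer errors $T_\ell$ and $T_\ell'$, each a sine-weighted cotangent sum of the type $V_\ell$ in \eqref{vlxbound}. Lemma \ref{cotangentsumlemma} only \emph{bounds} these sums: $|T_\ell| \ll q_\ell\|q_\ell\alpha\|(1+\log\max_{m\le\ell}a_m) \asymp a_{\ell+1}^{-1}(1+\log\max_{m\le\ell}a_m)$. Summed over the $b_\ell(N)$ values of $b$ (which can be as large as $a_{\ell+1}$), this yields a contribution of order $1+\log\max_{m\le\ell}a_m$, which does not decay at all as $\ell\to\infty$ and is far larger than the target $(a_2+\cdots+a_\ell)^{3/4}(q_\ell')^{-3/4}$. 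What you actually need is that the \emph{difference} $T_\ell - T_\ell'$ is small, but these are sums of different lengths ($q_\ell-1$ versus $q_\ell'-1$) over cotangents with different moduli, and Lemma \ref{cotangentsumlemma} says nothing about their proximity. Showing this difference is small is essentially the whole content of the proposition, so the reduction has not gained anything.

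The paper proceeds quite differently: it never transfers to the rational convergents. Instead it uses an explicit product formula (Lemma \ref{explicitlemma}) expressing $P_{q_\ell}(\alpha,\cdot)$ as a main factor times $\prod_{n=1}^{q_\ell-1} f((n-y_n-z)/q_\ell)/f((n-z)/q_\ell)$, with $y_n$ depending on $\{nq_{\ell-1}/q_\ell\}$, and the analogous formula for $\alpha'$. The $\alpha$ and $\alpha'$ products are then compared \emph{term by term} in $n$, after introducing a cutoff $\psi_\ell$ in the $n$-variable. For $|n|>\psi_\ell$ the comparison is handled by the discrepancy of $\{nq_{\ell-1}/q_\ell\}$ (Koksma's inequality, yielding the $a_2+\cdots+a_\ell$ numerator and the $\log(a_1+1)$ term), and for small $|n|$ by direct estimates using Lemma \ref{ql-ql'lemma}. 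The exponent $3/4$ arises from optimising this cutoff as $\psi_\ell \approx (a_2+\cdots+a_\ell)^{1/4}(q_\ell')^{3/4}$, balancing the two regimes in $n$; it has nothing to do with splitting the $b$-range as you suggest.
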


We give the proof of Proposition \ref{tailprop} after two preliminary lemmas. As before we write $f(x):=|2 \sin (\pi x)|$.

\begin{lem}\label{explicitlemma} For any $x,x' \in \mathbb{R}$, we have the explicit formulas
\[ P_{q_{\ell}} \left( \alpha, (-1)^{\ell} \frac{x}{q_{\ell}} \right) = f \left( \| q_{\ell} \alpha \| + \frac{x}{q_{\ell}} \right) \frac{f(z)}{f(z/q_{\ell})} \prod_{n=1}^{q_{\ell}-1} \frac{f \left( \frac{n-y_n-z}{q_{\ell}} \right)}{f \left( \frac{n-z}{q_{\ell}} \right)} , \]
and similarly
\[ P_{q_{\ell}'} \left( \alpha', (-1)^{\ell -1} \frac{x'}{q_{\ell}'} \right) = f \left( \| q_{\ell}' \alpha' \| + \frac{x'}{q_{\ell}'} \right) \frac{f(z')}{f(z'/q_{\ell}')} \prod_{n=1}^{q_{\ell}'-1} \frac{f \left( \frac{n-y_n'-z'}{q_{\ell}'} \right)}{f \left( \frac{n-z'}{q_{\ell}'} \right)} , \]
where
\begin{equation}\label{ynyn'zz'}
\begin{split} y_n:= \left( \left\{ \frac{nq_{\ell -1}}{q_{\ell}} \right\} - \frac{1}{2} \right) q_{\ell} \| q_{\ell} \alpha \| \qquad &\textrm{and} \qquad y_n':= \left( \left\{ \frac{nq_{\ell -1}'}{q_{\ell}'} \right\} - \frac{1}{2} \right) q_{\ell}' \| q_{\ell}' \alpha' \| , \\ z:= x+\frac{q_{\ell} \| q_{\ell} \alpha \|}{2} \qquad &\textrm{and} \qquad z':=x'+\frac{q_{\ell}' \| q_{\ell}' \alpha' \|}{2} . \end{split}
\end{equation}
\end{lem}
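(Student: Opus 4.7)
The plan is to reduce both identities to the classical cyclotomic factorization
\[ \prod_{n=0}^{q-1} f((n+w)/q) = f(w), \qquad f(x)=|2\sin(\pi x)|, \]
which follows by setting $z=e^{2\pi iw/q}$ in $\prod_{n=0}^{q-1}(z-e^{2\pi in/q})=z^q-1$ and taking absolute values. Separating the $n=0$ term then yields in particular
\[ \prod_{n=1}^{q_\ell-1} f((n-z)/q_\ell) = f(z)/f(z/q_\ell), \]
which already accounts for the prefactor $f(z)/f(z/q_\ell)$ in the statement.

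For the first formula I would start from $P_{q_\ell}(\alpha,(-1)^\ell x/q_\ell)=\prod_{n=1}^{q_\ell} f(n\alpha+(-1)^\ell x/q_\ell)$ and peel off the $n=q_\ell$ factor using $q_\ell\alpha-p_\ell=(-1)^\ell\|q_\ell\alpha\|$ together with $f$ being $1$-periodic and even; this gives $f(\|q_\ell\alpha\|+x/q_\ell)$ exactly. For $1\le n\le q_\ell-1$, substituting $n\alpha=np_\ell/q_\ell+(-1)^\ell n\|q_\ell\alpha\|/q_\ell$ and reducing $np_\ell$ modulo $q_\ell$ transforms the factor into $f((m+(-1)^\ell(n\|q_\ell\alpha\|+x))/q_\ell)$, where $m=np_\ell\bmod q_\ell$. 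The identity $p_\ell q_{\ell-1}\equiv(-1)^{\ell-1}\pmod{q_\ell}$ gives the inverse permutation $m\mapsto(-1)^{\ell-1}mq_{\ell-1}\bmod q_\ell$, so that $n\|q_\ell\alpha\|$ equals $\{mq_{\ell-1}/q_\ell\}\,q_\ell\|q_\ell\alpha\|$ for $\ell$ odd and $(1-\{mq_{\ell-1}/q_\ell\})\,q_\ell\|q_\ell\alpha\|$ for $\ell$ even. Centering $\{mq_{\ell-1}/q_\ell\}$ around $1/2$ to extract $y_m$ and absorbing $q_\ell\|q_\ell\alpha\|/2$ into $z=x+q_\ell\|q_\ell\alpha\|/2$, the odd case immediately gives $f((m-y_m-z)/q_\ell)$, while the even case gives $f((m-y_m+z)/q_\ell)$, which is converted into $f((m'-y_{m'}-z)/q_\ell)$ by the further substitution $m'=q_\ell-m$ using the symmetry $y_{q_\ell-m}=-y_m$ and the evenness and $1$-periodicity of $f$.

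At this point the product has been reduced to $f(\|q_\ell\alpha\|+x/q_\ell)\prod_{n=1}^{q_\ell-1} f((n-y_n-z)/q_\ell)$, and multiplying and dividing by $\prod_{n=1}^{q_\ell-1} f((n-z)/q_\ell)=f(z)/f(z/q_\ell)$ produces the claimed closed form. The second formula follows from an identical argument applied to $\alpha'$: since $p_\ell'/q_\ell'$ is the $(\ell-1)$-th convergent of $\alpha'$, the relation $q_\ell'\alpha'-p_\ell'=(-1)^{\ell-1}\|q_\ell'\alpha'\|$ replaces its counterpart for $\alpha$, and all signs $(-1)^\ell$ become $(-1)^{\ell-1}$ accordingly.

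The only delicate point is bookkeeping the signs across the two parities of $\ell$—specifically, checking that the even-$\ell$ step $m\mapsto q_\ell-m$ is compatible with $y_{q_\ell-m}=-y_m$ so that both parities unify into the single closed form stated in the lemma. Once this symmetry is verified, the remaining manipulations are routine.
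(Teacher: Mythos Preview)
Your proof is correct and follows essentially the same approach as the paper: peel off the $n=q_\ell$ factor, rewrite $n\alpha$ via the convergent $p_\ell/q_\ell$, reindex using the bijection on nonzero residues mod $q_\ell$ coming from $p_\ell q_{\ell-1}\equiv(-1)^{\ell-1}$, and then apply the cyclotomic identity $\prod_{n=1}^{q-1}f((n-t)/q)=f(t)/f(t/q)$. The only cosmetic difference is that the paper centers $\{n/q_\ell\}$ around $1/2$ \emph{before} the substitution and then invokes the evenness of $f$ once, which handles both parities of $\ell$ uniformly and avoids your extra reflection $m\mapsto q_\ell-m$ in the even case.
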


\begin{proof} By peeling off the last factor,
\[ \begin{split} P_{q_{\ell}} &\left( \alpha, (-1)^{\ell} \frac{x}{q_{\ell}} \right) \\ &= f \left( q_{\ell} \alpha + (-1)^{\ell} \frac{x}{q_{\ell}} \right) \prod_{n=1}^{q_{\ell}-1} f \left( n \alpha + (-1)^{\ell} \frac{x}{q_{\ell}} \right) \\ &= f \left( \| q_{\ell} \alpha \| + \frac{x}{q_{\ell}} \right) \prod_{n=1}^{q_{\ell}-1} f \left( \frac{np_{\ell}}{q_{\ell}} + (-1)^{\ell} \left( \left\{ \frac{n}{q_{\ell}} \right\} - \frac{1}{2} \right) \| q_{\ell} \alpha \| + (-1)^{\ell} \frac{x+\frac{q_{\ell} \| q_{\ell} \|}{2}}{q_{\ell}} \right) . \end{split} \]
The mapping $n \mapsto n q_{\ell -1}$ is a bijection on the set of nonzero residues modulo $q_{\ell}$, which sends $n p_\ell$ to $(-1)^{\ell+1} n$ as a consequence of $p_{\ell-1} q_\ell - p_\ell q_{\ell-1} = (-1)^\ell$. Using this bijection to reorder the product in the previous formula, by the symmetry of $f$ we obtain
\[ P_{q_{\ell}} \left( \alpha, (-1)^{\ell} \frac{x}{q_{\ell}} \right) = f \left( \| q_{\ell} \alpha \| + \frac{x}{q_{\ell}} \right) \prod_{n=1}^{q_{\ell}-1} f \left( \frac{n-y_n-z}{q_{\ell}} \right) . \]
The simple identity \cite[Proposition 9]{AB1}
\[ \prod_{n=1}^{q-1} f \left( \frac{n-t}{q} \right) = \left\{ \begin{array}{ll} \frac{f(t)}{f(t/q)} & \textrm{if } t/q \not\in \mathbb{Z}, \\ q & \textrm{if } t/q \in \mathbb{Z} \end{array} \right. \qquad (q \in \mathbb{N}, \,\, t \in \mathbb{R}) \]
further shows that
\[ P_{q_{\ell}} \left( \alpha, (-1)^{\ell} \frac{x}{q_{\ell}} \right) = f \left( \| q_{\ell} \alpha \| + \frac{x}{q_{\ell}} \right) \frac{f(z)}{f(z/q_{\ell})} \prod_{n=1}^{q_{\ell}-1} \frac{f \left( \frac{n-y_n-z}{q_{\ell}} \right)}{f \left( \frac{n-z}{q_{\ell}} \right)} , \]
as claimed. The proof for $\alpha'$ is entirely analogous.
\end{proof}

\begin{lem}\label{ql-ql'lemma} Let $1 \le \ell \le m<L$. If either $m \ge 2$, or $m=1$ and $a_2>1$, then
\[ \big|q_{\ell} \| q_m \alpha \| - q_{\ell}' \| q_m' \alpha' \| \big| \le \frac{2}{q_{\ell +1} q_{m+1}'} . \]
\end{lem}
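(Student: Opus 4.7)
The plan is to reduce the lemma to a single algebraic identity, namely $\|q_m' \alpha'\| = \|q_m \alpha\|/\alpha$, which I expect to hold exactly under the stated hypothesis. Once this is in hand, the claimed bound (in fact with constant $1$ instead of $2$) will come out of a two-line telescoping computation combined with the standard estimate $\|q_n \beta\| \le 1/q_{n+1}$ applied to both $\alpha$ and $\alpha'$.

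First I would derive the identity. Using $\alpha' = 1/\alpha - a_1$ together with the formulas $q_m' = p_m$ and $p_m' = q_m - a_1 p_m$ from \eqref{ql'pl'formula}, a direct expansion gives
\[ q_m' \alpha' - p_m' = p_m \left( \frac{1}{\alpha} - a_1 \right) - (q_m - a_1 p_m) = \frac{p_m - q_m \alpha}{\alpha}, \]
so $|q_m' \alpha' - p_m'| = \|q_m \alpha\|/\alpha$ (since $m \ge 1$, $p_m$ is the nearest integer to $q_m \alpha$). The step that actually requires the hypothesis is concluding that $|q_m' \alpha' - p_m'| = \|q_m' \alpha'\|$, i.e.\ that $p_m'$ is the nearest integer to $q_m' \alpha'$. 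For $m \ge 2$, $p_m'/q_m'$ is a convergent of $\alpha'$ of positive depth and the best-approximation property gives this for free. For $m = 1$, however, the degenerate ``convergent'' $p_1'/q_1' = 0/1$ is the nearest integer to $q_1' \alpha' = \alpha'$ only when $\alpha' \le 1/2$, which is exactly the condition $a_2 \ge 2$. This is precisely why the hypothesis has to split into two cases, and it is the single genuine obstacle in the proof.

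Once the identity is in place, the rest is immediate. Substituting $q_\ell' = p_\ell$ and the identity gives
\[ q_\ell \|q_m \alpha\| - q_\ell' \|q_m' \alpha'\| = q_\ell \|q_m \alpha\| - p_\ell \frac{\|q_m \alpha\|}{\alpha} = \frac{\|q_m \alpha\|}{\alpha} \bigl( q_\ell \alpha - p_\ell \bigr), \]
whose absolute value equals $(\|q_m \alpha\|/\alpha) \cdot \|q_\ell \alpha\| = \|q_m' \alpha'\| \cdot \|q_\ell \alpha\|$ (using that $|q_\ell \alpha - p_\ell| = \|q_\ell \alpha\|$ since $\ell \ge 1$). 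Bounding both factors by the standard inequality yields
\[ \bigl| q_\ell \|q_m \alpha\| - q_\ell' \|q_m' \alpha'\| \bigr| \le \frac{1}{q_{m+1}' q_{\ell+1}} \le \frac{2}{q_{\ell+1} q_{m+1}'}, \]
as claimed. The factor of $2$ in the lemma's statement is therefore slack, presumably for robustness in the application in Proposition \ref{tailprop}.
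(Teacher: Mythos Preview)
Your proof is correct and in fact cleaner than the paper's. The key identity $\|q_m'\alpha'\| = \|q_m\alpha\|/\alpha$ (under the hypothesis) is exactly right, and your verification of when $p_m'$ is the nearest integer to $q_m'\alpha'$ pinpoints precisely why the case distinction in the statement is needed. The final bound $\|q_m'\alpha'\|\cdot\|q_\ell\alpha\| \le 1/(q_{m+1}'q_{\ell+1})$ is sharp with constant $1$.

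The paper proceeds quite differently: it writes $\|q_m\alpha\| = 1/(rq_m+q_{m-1})$ and $\|q_m'\alpha'\| = 1/(rq_m'+q_{m-1}')$ with $r=[a_{m+1};a_{m+2},\dots]$, forms the difference as a single rational expression, and then splits into the three cases $m=\ell$, $m=\ell+1$, $m\ge\ell+2$ (the last handled via $|p_m/q_m - p_\ell/q_\ell| \le 2|\alpha - p_\ell/q_\ell|$, which is where the factor $2$ enters). The case $m=1$, $a_2>1$ is dismissed as checkable ``by hand''. Your approach avoids all of this case analysis by recognizing that the Gauss map relation $\alpha'=1/\alpha-a_1$ forces the exact factorization $q_\ell\|q_m\alpha\| - q_\ell'\|q_m'\alpha'\| = \|q_m'\alpha'\|\,(q_\ell\alpha-p_\ell)$, so the two ``errors'' $\|q_m'\alpha'\|$ and $\|q_\ell\alpha\|$ simply multiply. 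This is both shorter and yields the stronger constant $1$; the paper's $2$ is indeed slack.
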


\begin{proof} The case $m=1$, $a_2>1$ can be checked ``by hand.'' Assume that $m \ge 2$, and let $r=[a_{m+1};a_{m+2}, \dots ]$. Using \eqref{ql'pl'formula} and the identities $\| q_m \alpha \| = 1/(rq_m+q_{m-1})$ and $\| q_m' \alpha' \|=1/(rq_m'+q_{m-1}')$ from the theory of continued fractions, we deduce
\[ q_{\ell} \| q_m \alpha \| - q_{\ell}' \| q_m' \alpha' \| = \frac{r q_{\ell} q_m \left( \frac{p_m}{q_m} - \frac{p_{\ell}}{q_{\ell}} \right) + q_{\ell} q_{m-1} \left( \frac{p_{m-1}}{q_{m-1}} - \frac{p_{\ell}}{q_{\ell}} \right)}{(rq_m+q_{m-1}) (rq_m'+q_{m-1}')} . \]
Note that $a_{m+1} \le r \le a_{m+1}+1$. If $m=\ell$, then
\[ \big| q_{\ell} \| q_m \alpha \| - q_{\ell}' \| q_m' \alpha' \| \big| = \frac{1}{(r q_{\ell} + q_{\ell -1})(rq_{\ell}'+q_{\ell-1}')} \le \frac{1}{q_{\ell +1} q_{\ell +1}'} , \]
as claimed. If $m=\ell +1$, then
\[ \big| q_{\ell} \| q_m \alpha \| - q_{\ell}' \| q_m' \alpha' \| \big| = \frac{r}{(r q_{\ell +1} + q_{\ell})(rq_{\ell +1}'+q_{\ell}')} \le \frac{1}{q_{\ell +1} q_{\ell +2}'} , \]
as claimed. If $m \ge \ell +2$, then somewhat roughly we have $|p_m/q_m - p_{\ell}/q_{\ell}| \le 2 |\alpha - p_{\ell}/q_{\ell}|$ and $|p_{m-1}/q_{m-1} - p_{\ell}/q_{\ell}| \le 2 |\alpha - p_{\ell}/q_{\ell}|$, which gives
\[ \big| q_{\ell} \| q_m \alpha \| - q_{\ell}' \| q_m' \alpha' \| \big| \le \frac{(r q_{\ell} q_m + q_{\ell} q_{m-1}) 2 |\alpha - p_{\ell}/q_{\ell}|}{(r q_m + q_{m-1}) (r q_m' + q_{m-1}')} = \frac{2 \| q_{\ell} \alpha \|}{r q_m' +q_{m-1}'} \le \frac{2}{q_{\ell +1} q_{m+1}'} , \]
as claimed.
\end{proof}

\begin{proof}[Proof of Proposition \ref{tailprop}] For any $0 \le b \le b_{\ell}(N)-1$, let
\[ x := b q_{\ell} \| q_{\ell} \alpha \| + \varepsilon_{\ell}(N) \qquad \textrm{and} \qquad x' := b q_{\ell}' \| q_{\ell}' \alpha' \| + \varepsilon_{\ell}'(N) ,  \]
and let $y_n,y_n',z,z'$ be as in \eqref{ynyn'zz'}; for the sake of readability, the dependence of $x,x',z,z'$ on $b$ is suppressed. The explicit formulas in Lemma \ref{explicitlemma} give
\begin{equation}\label{prodbexplicit}
\begin{split} \prod_{b=0}^{b_{\ell}(N)-1} \frac{P_{q_{\ell}} \left( \alpha, (-1)^{\ell} \frac{b q_{\ell} \| q_{\ell} \alpha \| + \varepsilon_{\ell}(N)}{q_{\ell}} \right)}{P_{q_{\ell}'} \left( \alpha', (-1)^{\ell -1} \frac{b q_{\ell}' \| q_{\ell}' \alpha' \| + \varepsilon_{\ell}'(N)}{q_{\ell}'} \right)} = & \prod_{b=0}^{b_{\ell}(N)-1} \frac{f \left( \| q_{\ell} \alpha \| + \frac{x}{q_{\ell}} \right) \frac{f(z)}{f(z/q_{\ell})}}{f \left( \| q_{\ell}' \alpha' \| + \frac{x'}{q_{\ell}'} \right) \frac{f(z')}{f(z'/q_{\ell}')}} \\ &\times \prod_{b=0}^{b_{\ell}(N)-1} \frac{\prod_{n=1}^{q_{\ell}-1}f \left( \frac{n-y_n-z}{q_{\ell}} \right) / f \left( \frac{n-z}{q_{\ell}} \right)}{\prod_{n=1}^{q_{\ell}'-1}f \left( \frac{n-y_n'-z'}{q_{\ell}'} \right) / f \left( \frac{n-z'}{q_{\ell}'} \right)} . \end{split}
\end{equation}
We may assume that either $\ell \ge 2$, or $\ell =1$ and $a_2>1$; indeed, otherwise the assumption $b_1(N)<a_2$ ensures that the left hand side of \eqref{prodbexplicit} is $1$, and we are done. Observe first, that by Lemma \ref{ql-ql'lemma} we have
\begin{equation}\label{x-x'}
\begin{split} |x-x'| &\le b \big|q_{\ell} \| q_{\ell} \alpha \| - q_{\ell}' \| q_{\ell}' \alpha' \| \big| + \left| \varepsilon_{\ell}(N) - \varepsilon_{\ell}'(N) \right| \\ &\le b \big|q_{\ell} \| q_{\ell} \alpha \| - q_{\ell}' \| q_{\ell}' \alpha' \| \big| + \sum_{m=\ell +1}^{L-1} b_m(N) \big|q_{\ell} \| q_m \alpha \| - q_{\ell}' \| q_m' \alpha' \| \big| \\ &\le \frac{2b}{q_{\ell +1} q_{\ell +1}'} + \sum_{m=\ell +1}^{L-1} a_{m+1} \frac{2}{q_{\ell +1} q_{m+1}'} \\ &\ll \frac{b+1}{q_{\ell +1} q_{\ell +1}'} . \end{split}
\end{equation}
Similarly, $|z-z'| \ll (b+1)/(q_{\ell +1} q_{\ell +1}')$. Lemma \ref{epsilonlemma} gives
\[ (b-1) q_{\ell} \| q_{\ell} \alpha \| + q_{\ell} \| q_{\ell +1} \alpha \| \le x \le q_{\ell} \| q_{\ell -1} \alpha \| - (a_{\ell +1} -b) q_{\ell} \| q_{\ell} \alpha \| , \]
and similar inequalities hold for $x'$.

We will use the following elementary estimate several times. Given a fixed parameter $0<\delta<1/2$, one readily checks that
\begin{equation}\label{expestimatet}
e^{t-4 ( \log 1/\delta ) t^2} \le 1+t \le e^t \qquad \textrm{for all } t \ge -1+\delta ,
\end{equation}
and consequently
\begin{equation}\label{expestimateU/V}
\exp \left( \frac{U-V}{V} - \left( 4 \log \frac{1}{\delta} \right) \left( \frac{U-V}{V} \right)^2 \right) \le \frac{U}{V} \le \exp \left( \frac{U-V}{V} \right) \qquad \textrm{for all } \frac{U}{V} \ge \delta .
\end{equation}

Let us now estimate the first line in \eqref{prodbexplicit}. Standard trigonometric identities and estimates yield
\[ \begin{split} f \left( \| q_{\ell} \alpha \| + \frac{x}{q_{\ell}} \right) &= f \left( \frac{z}{q_{\ell}} + \frac{\| q_{\ell} \alpha \|}{2} \right) \\ &= 2 \left| \sin \left( \pi \frac{z}{q_{\ell}} \right) + \cos \left( \pi \frac{z}{q_{\ell}} \right) \pi \frac{\| q_{\ell} \alpha \|}{2} \right| + O \left( \| q_{\ell} \alpha \|^2 f \left( \frac{z}{q_{\ell}} \right) + \| q_{\ell} \alpha \|^3 \right) , \end{split} \]
and so
\[ \begin{split} f \left( \| q_{\ell} \alpha \| + \frac{x}{q_{\ell}} \right) \frac{f(z)}{f(z/q_{\ell})} &= \left| 1 + \cot \left( \pi \frac{z}{q_{\ell}} \right) \pi \frac{\| q_{\ell} \alpha \|}{2} \right| f(z) + O \left( \| q_{\ell} \alpha \|^2 f(z) + \| q_{\ell} \alpha \|^3 q_{\ell} \right) \\ &= \big| x+q_{\ell} \| q_{\ell} \alpha \| \big| \frac{f(z)}{|z|} + O \left( \left( \| q_{\ell} \alpha \|^2 + \frac{\| q_{\ell} \alpha \|}{q_{\ell}^2} \right) f(z) + \| q_{\ell} \alpha \|^3 q_{\ell} \right) . \end{split} \]
We now turn the additive error into a multiplicative one. If $b \ge 1$, then by checking that $1 \le |x+q_{\ell} \| q_{\ell} \alpha \| | /|z| \le 2$, $f(z) \gg 1/a_{\ell +1}$ and
\[ f \left( \| q_{\ell} \alpha \| + \frac{x}{q_{\ell}} \right) \frac{f(z)}{f(z/q_{\ell})} \left( |x+q_{\ell} \| q_{\ell} \alpha \| | \frac{f(z)}{|z|} \right)^{-1} \gg \frac{1}{a_{\ell +1}} , \]
the estimate \eqref{expestimateU/V} with $\delta \approx 1/a_{\ell +1}$ gives
\[ f \left( \| q_{\ell} \alpha \| + \frac{x}{q_{\ell}} \right) \frac{f(z)}{f(z/q_{\ell})} = \big|x+q_{\ell} \| q_{\ell} \alpha \| \big| \frac{f(z)}{|z|} \exp \left( O \left( \frac{1}{a_{\ell +1}q_{\ell}^2} \right) \right) . \]
If $b=0$, then $f(z)/|z| \gg 1$, $f(z)\ll 1/a_{\ell +1}$ and
\[ f \left( \| q_{\ell} \alpha \| + \frac{x}{q_{\ell}} \right) \frac{f(z)}{f(z/q_{\ell})} \left( \big|x+q_{\ell} \| q_{\ell} \alpha \| \big| \frac{f(z)}{|z|} \right)^{-1} \gg 1 , \]
hence \eqref{expestimateU/V} with $\delta \approx 1$ gives
\[ f \left( \| q_{\ell} \alpha \| + \frac{x}{q_{\ell}} \right) \frac{f(z)}{f(z/q_{\ell})} = \big|x+q_{\ell} \| q_{\ell} \alpha \| \big| \frac{f(z)}{|z|} \exp \left( O \left( \frac{1}{a_{\ell +1}^2 q_{\ell}^2 (q_{\ell} \| q_{\ell} \alpha \| + \varepsilon_{\ell}(N))} \right) \right) . \]
Here $q_{\ell} \| q_{\ell} \alpha \| + \varepsilon_{\ell}(N) \gg 1/(a_{\ell +1} a_{\ell +2})$. On the other hand, if $b_{\ell +1}(N) \le 0.99 a_{\ell +2}$, then Lemma \ref{epsilonlemma} yields the better lower bound $q_{\ell} \| q_{\ell} \alpha \| + \varepsilon_{\ell}(N) \gg 1/a_{\ell +1}$. Combining all these cases and using assumption (ii), we thus have
\[ \begin{split} f \left( \| q_{\ell} \alpha \| + \frac{x}{q_{\ell}} \right) \frac{f(z)}{f(z/q_{\ell})} = & \big|x+q_{\ell} \| q_{\ell} \alpha \| \big| \frac{f(z)}{|z|} \\ &\times \exp \left( O \left( \frac{ I_{\{ b \ge 1 \}}}{a_{\ell +1}q_{\ell}^2} + I_{\{ b=0 \}} \frac{1+I_{\{ b_{\ell +1}(N) >0.99 a_{\ell +2} \} }a_{\ell +2}}{a_{\ell +1} q_{\ell}^2} \right) \right) \\ = &\big|x+q_{\ell} \| q_{\ell} \alpha \| \big| \frac{f(z)}{|z|} \exp \left( O \left( \frac{ I_{\{ b \ge 1 \}}}{a_{\ell +1}q_{\ell}^2} + \frac{ I_{\{ b=0 \}}}{q_{\ell +1}^{0.99} q_{\ell}} \right) \right) . \end{split} \]
An identical proof gives
\[ f \left( \| q_{\ell}' \alpha' \| + \frac{x'}{q_{\ell}'} \right) \frac{f(z')}{f(z'/q_{\ell}')} = \big|x'+q_{\ell}' \| q_{\ell}' \alpha' \| \big| \frac{f(z')}{|z'|} \exp \left( O \left( \frac{ I_{\{ b \ge 1 \}}}{a_{\ell +1}(q_{\ell}')^2} + \frac{ I_{\{ b=0 \}}}{(q_{\ell +1}')^{0.99} q_{\ell}'} \right) \right) , \]
therefore
\begin{equation}\label{firstline1}
\prod_{b=0}^{b_{\ell}(N)-1} \frac{f \left( \| q_{\ell} \alpha \| + \frac{x}{q_{\ell}} \right) \frac{f(z)}{f(z/q_{\ell})}}{f \left( \| q_{\ell}' \alpha' \| + \frac{x'}{q_{\ell}'} \right) \frac{f(z')}{f(z'/q_{\ell}')}} = \exp \left( O \left( \frac{1}{(q_{\ell}')^{1.99}} \right) \right) \prod_{b=0}^{b_{\ell}(N)-1} \frac{\big|x+q_{\ell} \| q_{\ell} \alpha \| \big| \frac{f(z)}{|z|}}{\big |x'+q_{\ell}' \| q_{\ell}' \alpha' \| \big| \frac{f(z')}{|z'|}} .
\end{equation}

Lemma \ref{ql-ql'lemma} and \eqref{x-x'} show that here
\[ x+q_{\ell} \| q_{\ell} \alpha \| = x'+q_{\ell}' \| q_{\ell}' \alpha' \| + O \left( \frac{b+1}{a_{\ell +1}^2q_{\ell} q_{\ell}'} \right) . \]
If $b \ge 1$, then $b/a_{\ell +1} \ll x+q_{\ell} \| q_{\ell} \alpha \| \ll b/a_{\ell +1}$ and the same holds for $\alpha'$, hence \eqref{expestimateU/V} with $\delta \approx 1$ gives
\[ \frac{\big|x+q_{\ell} \| q_{\ell} \alpha \|  \big|}{\big| x' +q_{\ell}' \| q_{\ell}' \alpha' \| \big|} = \exp \left( O \left( \frac{1}{a_{\ell +1} q_{\ell} q_{\ell}'} \right) \right) . \]
If $b=0$, then \eqref{expestimateU/V} with $\delta \approx I_{\{ b_{\ell +1}(N) \le 0.99 a_{\ell +2}\}} 1/a_{\ell +1} + 1/(a_{\ell +1} a_{\ell +2})$ and assumption (ii) yield
\[ \begin{split} &\frac{\big| x+q_{\ell} \| q_{\ell} \alpha \| \big|}{\big|x' +q_{\ell}' \| q_{\ell}' \alpha' \| \big|} \\ &= \exp \left( O \left( \frac{1}{a_{\ell +1}^2 q_{\ell} q_{\ell}' (q_{\ell} \| q_{\ell} \alpha \| + \varepsilon_{\ell}(N))} + \frac{1+\log a_{\ell +1} + I_{\{ b_{\ell +1}(N)>0.99 a_{\ell +2} \}} \log (a_{\ell+1}a_{\ell +2})}{a_{\ell +1}^4 q_{\ell}^2 (q_{\ell}')^2 (q_{\ell} \| q_{\ell} \alpha \| + \varepsilon_{\ell}(N))^2} \right) \right) \\ &= \exp \left( O \left( \frac{1+I_{\{ b_{\ell +1}(N) > 0.99 a_{\ell +2} \}}a_{\ell +2}}{a_{\ell +1}q_{\ell} q_{\ell}'} + \frac{\log a_{\ell +1} + I_{\{ b_{\ell +1}(N) > 0.99 a_{\ell +2} \}} a_{\ell +2}^2 \log (a_{\ell+1}a_{\ell +2})}{a_{\ell +1}^2 q_{\ell}^2 (q_{\ell}')^2} \right) \right) \\ &= \exp \left( O \left( \frac{1}{(q_{\ell}')^{1.99}} \right) \right) . \end{split} \]
The previous two formulas show that
\begin{equation}\label{firstline2}
\prod_{b=0}^{b_{\ell}(N)-1} \frac{\big|x+q_{\ell} \| q_{\ell} \alpha \| \big|}{\big|x' +q_{\ell}' \| q_{\ell}' \alpha' \| \big|} = \exp \left( O \left( \frac{1}{(q_{\ell}')^{1.99}} \right) \right) .
\end{equation}

Note that $z,z' \in (-1/2,1)$, and the function $f(t)/|t|$ is Lipschitz on $(-1/2,1)$. It is easy to see that $f(z)/|z| \gg (a_{\ell +1}-b)/a_{\ell +1}$, and \eqref{x-x'} also shows that
\[ \frac{f(z)}{|z|} = \frac{f(z')}{|z'|} + O \left( \frac{b+1}{a_{\ell +1}^2 q_{\ell} q_{\ell}'} \right) . \]
Estimate \eqref{expestimateU/V} with $\delta \approx 1/a_{\ell +1}$ thus gives
\[ \frac{f(z)/|z|}{f(z')/|z'|} = \exp \left( O \left( \frac{b+1}{(a_{\ell +1}-b) a_{\ell +1} q_{\ell} q_{\ell}'} + \frac{(b+1)^2 (1+\log a_{\ell +1})}{(a_{\ell +1}-b)^2 a_{\ell +1}^2 q_{\ell}^2 (q_{\ell}')^2} \right) \right) , \]
and using assumption (i) we get
\[ \prod_{b=0}^{b_{\ell}(N)-1} \frac{f(z)/|z|}{f(z')/|z'|} = \exp \left( O \left( \frac{1+I_{\{ b_{\ell}(N) > 0.99 a_{\ell +1} \}} \log a_{\ell +1}}{q_{\ell} q_{\ell}'}  \right) \right) = \exp \left( O \left( \frac{\log q_{\ell}'}{(q_{\ell}')^2} \right) \right) . \]
The previous formula, \eqref{firstline1} and \eqref{firstline2} show that the first line in \eqref{prodbexplicit} satisfies
\begin{equation}\label{firstline}
\prod_{b=0}^{b_{\ell}(N)-1} \frac{f \left( \| q_{\ell} \alpha \| + \frac{x}{q_{\ell}} \right) \frac{f(z)}{f(z/q_{\ell})}}{f \left( \| q_{\ell}' \alpha' \| + \frac{x'}{q_{\ell}'} \right) \frac{f(z')}{f(z'/q_{\ell}')}} = \exp \left( O \left( \frac{1}{(q_{\ell}')^{1.99}} \right) \right) .
\end{equation}

Next, we estimate the second line of \eqref{prodbexplicit}. We give a detailed proof of the case when $q_{\ell}' \ge 3$, and then indicate at the end how to modify the proof for $q_{\ell}' <3$.

Note that $|y_n| \le q_{\ell} \| q_{\ell} \alpha \|/2$, and $-1/2<z <1$. By standard trigonometric identities,
\begin{equation}\label{f/f}
\frac{f \left( \frac{n-y_n-z}{q_{\ell}} \right)}{f \left( \frac{n-z}{q_{\ell}} \right)} = \left| \cos \left( \pi \frac{y_n}{q_{\ell}} \right) - \sin \left( \pi \frac{y_n}{q_{\ell}} \right) \cot \left( \pi \frac{n-z}{q_{\ell}} \right) \right| .
\end{equation}
For any integer $|n| \ge 2$, here $\cos (\pi y_n/q_{\ell}) \ge \cos (\pi /6)$ and $|\sin (\pi y_n/q_{\ell}) \cot (\pi (n-z)/q_{\ell})| \le |y_n|/|n-z| \le 1/2$.

Let $1 \le \psi_{\ell} < q_{\ell}'/2$ be a parameter to be chosen. Applying \eqref{expestimatet} with $\delta \approx 1$ leads to
\[ \prod_{\psi_{\ell}<n \le q_{\ell}/2} \frac{f \left( \frac{n-y_n-z}{q_{\ell}} \right)}{f \left( \frac{n-z}{q_{\ell}} \right)} = \exp \left( - \sum_{\psi_{\ell}<n\le q_{\ell}/2} \sin \left( \pi \frac{y_n}{q_{\ell}} \right) \cot \left( \pi \frac{n-z}{q_{\ell}} \right) + O \left( \frac{1}{a_{\ell +1}^2 \psi_{\ell}} \right) \right) . \]
Since $q_{\ell -1}/q_{\ell}= [0;a_{\ell}, a_{\ell -1}, \dots , a_1]$, a classical estimate \cite[p.\ 126]{KN} states that the discrepancy of the sequence $\{ n q_{\ell -1}/q_{\ell} \}$, $1 \le n \le N$ is $\ll (N/q_{\ell}'+a_2+\cdots +a_{\ell})/N$. Koksma's inequality \cite[p.\ 143]{KN} thus yields
\[ \left| \sum_{n=1}^N \sin \left( \pi \left( \left\{ \frac{n q_{\ell -1}}{q_{\ell}} \right\} - \frac{1}{2} \right) \| q_{\ell} \alpha \| \right) \right| \ll (N/q_{\ell}'+a_2+\cdots +a_{\ell}) \| q_{\ell} \alpha \| \qquad (1 \le N \le q_{\ell}/2), \]
and summation by parts leads to
\[ \begin{split} \left| \sum_{\psi_{\ell}<n\le q_{\ell}/2} \sin \left( \pi \frac{y_n}{q_{\ell}} \right) \cot \left( \pi \frac{n-z}{q_{\ell}} \right) \right| &\ll q_{\ell} \| q_{\ell} \alpha \| \left( \frac{1+\log q_{\ell}}{q_{\ell}'} + \frac{a_2+\cdots +a_{\ell}}{\psi_{\ell}} \right) \\ &\ll \frac{\log (a_1+1)}{a_{\ell +1} q_{\ell}'} + \frac{a_2+\cdots +a_{\ell}}{a_{\ell +1} \psi_{\ell}} . \end{split} \]
Estimating the factors $-q_{\ell}/2<n<-\psi_{\ell}$ is entirely analogous, therefore we have
\begin{equation}\label{secondline1}
\prod_{n=1}^{q_{\ell}-1} \frac{f \left( \frac{n-y_n-z}{q_{\ell}} \right)}{f \left( \frac{n-z}{q_{\ell}} \right)} = \exp \left( O \left( \frac{\log (a_1+1)}{a_{\ell +1} q_{\ell}'} + \frac{a_2+\cdots +a_{\ell}}{a_{\ell +1} \psi_{\ell}} \right) \right)  \prod_{0<|n| \le \psi_{\ell}} \frac{f \left( \frac{n-y_n-z}{q_{\ell}} \right)}{f \left( \frac{n-z}{q_{\ell}} \right)}.
\end{equation}
An identical proof gives
\begin{equation}\label{secondline2}
\prod_{n=1}^{q_{\ell}'-1} \frac{f \left( \frac{n-y_n'-z'}{q_{\ell}'} \right)}{f \left( \frac{n-z'}{q_{\ell}'} \right)} = \exp \left( O \left( \frac{a_2+\cdots +a_{\ell}}{a_{\ell +1} \psi_{\ell}} \right) \right)  \prod_{0<|n| \le \psi_{\ell}} \frac{f \left( \frac{n-y_n'-z'}{q_{\ell}'} \right)}{f \left( \frac{n-z'}{q_{\ell}'} \right)}.
\end{equation}
We emphasize that we use the same cutoff $\psi_{\ell}$ for both $\alpha$ and $\alpha'$.

Consider now the ratio of the factors $2 \le |n| \le \psi_{\ell}$ in the previous two formulas. Formula \eqref{f/f} and its analogue for $\alpha'$ are still bounded away from zero. Therefore \eqref{expestimateU/V} with $\delta \approx 1$ gives
\[ \begin{split} &\frac{f \left( \frac{n-y_n-z}{q_{\ell}} \right) / f \left( \frac{n-z}{q_{\ell}} \right)}{f \left( \frac{n-y_n'-z'}{q_{\ell}'} \right) / f \left( \frac{n-z'}{q_{\ell}'} \right)} \\ &= \frac{\cos \left( \pi \frac{y_n}{q_{\ell}} \right) - \sin \left( \pi \frac{y_n}{q_{\ell}} \right) \cot \left( \pi \frac{n-z}{q_{\ell}} \right)}{\cos \left( \pi \frac{y_n'}{q_{\ell}'} \right) - \sin \left( \pi \frac{y_n'}{q_{\ell}'} \right) \cot \left( \pi \frac{n-z'}{q_{\ell}'} \right) } \\ &= \exp \left( O \left( \left| \sin \left( \pi \frac{y_n}{q_{\ell}} \right) \cot \left( \pi \frac{n-z}{q_{\ell}} \right) - \sin \left( \pi \frac{y_n'}{q_{\ell}'} \right) \cot \left( \pi \frac{n-z'}{q_{\ell}'} \right) \right| + \frac{1}{a_{\ell +1}^2 (q_{\ell}')^2} \right) \right) . \end{split} \]
Standard trigonometric estimates and \eqref{x-x'} show that here
\[ \begin{split} \bigg| \sin \left( \pi \frac{y_n}{q_{\ell}} \right) \cot \left( \pi \frac{n-z}{q_{\ell}} \right) &- \sin \left( \pi \frac{y_n'}{q_{\ell}'} \right) \cot \left( \pi \frac{n-z'}{q_{\ell}'} \right) \bigg| \\ &= \left| \frac{y_n}{n-z} - \frac{y_n'}{n-z'} \right| + O \left( \frac{1}{a_{\ell +1}^3 (q_{\ell}')^2} + \frac{n^2}{a_{\ell +1} (q_{\ell}')^3} \right) \\ &\ll \frac{|y_n-y_n'|}{|n|} + \frac{b+1}{a_{\ell +1}^3 n^2 q_{\ell} q_{\ell}'}+ \frac{1}{a_{\ell +1}^3 (q_{\ell}')^2} + \frac{n^2}{a_{\ell +1} (q_{\ell}')^3} . \end{split} \]
Observe that the function $(\{ n t \} -1/2)/|n|$ consists of linear segments of slope $\pm 1$ with jumps at the points $j/n$, $j \in \mathbb{Z}$. Recalling \eqref{ql'pl'formula}, we have
\[ \left| \frac{q_{\ell -1}}{q_{\ell}} - \frac{q_{\ell -1}'}{q_{\ell}'} \right| = \frac{1}{q_{\ell} q_{\ell}'} , \]
whereas the distance from $q_{\ell -1}/q_{\ell}$ to any jump $j/n$ is
\[ \left| \frac{q_{\ell -1}}{q_{\ell}} - \frac{j}{n} \right| \ge \frac{1}{|n| q_{\ell}} . \]
Hence there is no jump between $q_{\ell -1}/q_{\ell}$ and $q_{\ell-1}'/q_{\ell}'$, and using also Lemma \ref{ql-ql'lemma} we obtain
\[ \frac{|y_n-y_n'|}{|n|} \le \left| \frac{q_{\ell -1}}{q_{\ell}} - \frac{q_{\ell -1}'}{q_{\ell}'} \right| q_{\ell} \| q_{\ell} \alpha \| + \frac{1}{2|n|} |q_{\ell} \| q_{\ell} \alpha \| - q_{\ell}' \| q_{\ell}' \alpha' \| | \ll \frac{1}{a_{\ell +1} q_{\ell} q_{\ell}'} . \]
After discarding negligible error terms, the previous estimates yield
\begin{equation}\label{secondline3}
\begin{split} \prod_{2 \le |n| \le \psi_{\ell}} \frac{f \left( \frac{n-y_n-z}{q_{\ell}} \right) / f \left( \frac{n-z}{q_{\ell}} \right)}{f \left( \frac{n-y_n'-z'}{q_{\ell}'} \right) / f \left( \frac{n-z'}{q_{\ell}'} \right)} &= \prod_{2 \le |n| \le \psi_{\ell}} \exp \left( O \left( \frac{1}{a_{\ell +1} (q_{\ell}')^2} + \frac{n^2}{a_{\ell +1} (q_{\ell}')^3} \right) \right) \\ &= \exp \left( O \left( \frac{\psi_{\ell}}{a_{\ell +1} (q_{\ell}')^2} + \frac{\psi_{\ell}^3}{a_{\ell +1} (q_{\ell}')^3} \right) \right) . \end{split}
\end{equation}

Finally, let $n= \pm 1$. Under the assumptions
\begin{equation}\label{cdconditions}
\begin{split} \frac{q_{\ell} \| q_{\ell} \alpha \| /2}{|n-z|} \le 1-\frac{1}{c},& \qquad \frac{q_{\ell}' \| q_{\ell}' \alpha' \| /2}{|n-z'|} \le 1-\frac{1}{c}, \\ |n-z| \ge \frac{1}{d},& \qquad |n-z'| \ge \frac{1}{d} \end{split}
\end{equation}
with some $c \ge 2$ and $d \ge 1$, it is not difficult to see that $1/c \ll f \left( \frac{n-y_n-z}{q_{\ell}} \right) / f \left( \frac{n-z}{q_{\ell}} \right) \ll 1$, and the same holds for $\alpha'$. Following the steps in the previous paragraph, we also deduce
\[ \begin{split} &\left| \frac{f \left( \frac{n-y_n-z}{q_{\ell}} \right)}{f \left( \frac{n-z}{q_{\ell}} \right)} - \frac{f \left( \frac{n-y_n'-z'}{q_{\ell}'} \right)}{f \left( \frac{n-z'}{q_{\ell}'} \right)} \right| \\ &= \Bigg| \left| \cos \left( \pi \frac{y_n}{q_{\ell}} \right) - \sin \left( \pi \frac{y_n}{q_{\ell}} \right) \cot \left( \pi \frac{n-z}{q_{\ell}} \right) \right| - \left| \cos \left( \pi \frac{y_n'}{q_{\ell}'} \right) - \sin \left( \pi \frac{y_n'}{q_{\ell}'} \right) \cot \left( \pi \frac{n-z'}{q_{\ell}'} \right) \right| \Bigg| \\ &\ll \frac{d}{a_{\ell +1} (q_{\ell}')^2} + \frac{d^2}{a_{\ell +1}^2 (q_{\ell}')^2} . \end{split} \]
Estimate \eqref{expestimateU/V} with $\delta \approx 1/c$ thus gives
\begin{equation}\label{npm1}
\frac{f \left( \frac{n-y_n-z}{q_{\ell}} \right) / f \left( \frac{n-z}{q_{\ell}} \right)}{f \left( \frac{n-y_n'-z'}{q_{\ell}'} \right) / f \left( \frac{n-z'}{q_{\ell}'} \right)} = \exp \left( O \left( \frac{cd}{a_{\ell +1} (q_{\ell}')^2} + \frac{cd^2}{a_{\ell +1}^2 (q_{\ell}')^2} + \frac{c^2 d^2 \log c}{a_{\ell +1}^2 (q_{\ell}')^4} + \frac{c^2 d^4 \log c}{a_{\ell +1}^4 (q_{\ell}')^4} \right) \right) .
\end{equation}

Consider the factor $n=-1$. If $b \ge 1$, then $z>0$; in particular, \eqref{cdconditions} is satisfied with $c=2$ and $d=1$. If $b=0$, then the general fact
\[ q_{\ell} \| q_{\ell} \alpha \| \le \frac{1}{a_{\ell +1} + \frac{1}{a_{\ell +2}+1}} \le 1-\frac{1}{a_{\ell +2}+2} \]
and its analogue for $\alpha'$ show that \eqref{cdconditions} is satisfied with $c=a_{\ell+2}+2$ and $d=2$. On the other hand, if $b_{\ell +1} (N) \le 0.99 a_{\ell +2}$, then by Lemma \ref{epsilonlemma} we have the better lower bound $\varepsilon_{\ell}(N) \ge -0.999 q_{\ell} \| q_{\ell} \alpha \|$, and consequently
\[ z = \frac{q_{\ell} \| q_{\ell} \alpha \|}{2} + \varepsilon_{\ell}(N) \ge -0.499 q_{\ell} \| q_{\ell} \alpha \| ; \]
in particular, \eqref{cdconditions} holds with $c \ll 1$ and $d=2$. Combining all these cases, \eqref{npm1} gives
\[ \begin{split} \frac{f \left( \frac{-1-y_{-1}-z}{q_{\ell}} \right) / f \left( \frac{-1-z}{q_{\ell}} \right)}{f \left( \frac{-1-y_{-1}'-z'}{q_{\ell}'} \right) / f \left( \frac{-1-z'}{q_{\ell}'} \right)} = \exp \bigg( O \bigg( &\frac{I_{\{ b \ge 1 \}}}{a_{\ell +1} (q_{\ell}')^2} + I_{\{ b=0 \}} \frac{1+I_{\{ b_{\ell +1} (N) >0.99 a_{\ell +2} \}} a_{\ell +2}}{a_{\ell +1} (q_{\ell}')^2} \\ &+ I_{\{ b=0 \}} \frac{I_{\{ b_{\ell +1}(N)>0.99 a_{\ell +2} \}} a_{\ell +2}^2 \log a_{\ell +2}}{a_{\ell +1}^2 (q_{\ell}')^4} \bigg) \bigg) . \end{split} \]
Simplifying the error using assumption (ii) shows
\begin{equation}\label{secondline4}
\prod_{b=0}^{b_{\ell}(N)-1} \frac{f \left( \frac{-1-y_{-1}-z}{q_{\ell}} \right) / f \left( \frac{-1-z}{q_{\ell}} \right)}{f \left( \frac{-1-y_{-1}'-z'}{q_{\ell}'} \right) / f \left( \frac{-1-z'}{q_{\ell}'} \right)} = \exp \left( O \left( \frac{1}{(q_{\ell}')^{1.99}} \right) \right) .
\end{equation}

Consider the factor $n=1$. By Lemma \ref{epsilonlemma}, we have
\[ |1-z| = 1-\left( \left( b+\frac{1}{2} \right) q_{\ell} \| q_{\ell} \alpha \| + \varepsilon_{\ell}(N) \right) \ge q_{\ell -1} \| q_{\ell} \alpha \| + \left( a_{\ell +1} -b-\frac{1}{2} \right) q_{\ell} \| q_{\ell} \alpha \| \gg \frac{1}{a_{\ell +1}} , \]
and
\[ \frac{q_{\ell} \| q_{\ell} \alpha \| /2}{|1-z|} \le \frac{1/2}{\frac{q_{\ell -1}}{q_{\ell}} + a_{\ell +1}-b-\frac{1}{2}} \le \frac{1}{\frac{2 q_{\ell -1}}{q_{\ell}}+1} . \]
Repeating the same estimates for $\alpha'$, we see that \eqref{cdconditions} is satisfied with $c \ll a_{\ell}$ and $d \ll a_{\ell +1}$. On the other hand, if $b_{\ell}(N) \le 0.99 a_{\ell +1}$, then \eqref{cdconditions} is satisfied with $c \ll 1$ and $d \ll 1$. Combining these cases, \eqref{npm1} gives
\[ \frac{f \left( \frac{1-y_1-z}{q_{\ell}} \right) / f \left( \frac{1-z}{q_{\ell}} \right)}{f \left( \frac{1-y_1'-z'}{q_{\ell}'} \right) / f \left( \frac{1-z'}{q_{\ell}'} \right)} = \exp \left( O \left( \frac{1}{a_{\ell +1} (q_{\ell}')^2} + \frac{I_{\{ b_{\ell} (N) > 0.99 a_{\ell +1} \} }}{q_{\ell -1}' q_{\ell}'} \right) \right) . \]
Simplifying the error using assumption (i) shows
\begin{equation}\label{secondline5}
\prod_{b=0}^{b_{\ell}(N)-1} \frac{f \left( \frac{1-y_1-z}{q_{\ell}} \right) / f \left( \frac{1-z}{q_{\ell}} \right)}{f \left( \frac{1-y_1'-z'}{q_{\ell}'} \right) / f \left( \frac{1-z'}{q_{\ell}'} \right)} = \exp \left( O \left( \frac{1}{q_{\ell -1}' (q_{\ell}')^{0.99}} \right) \right) .
\end{equation}

By \eqref{secondline1}--\eqref{secondline3}, \eqref{secondline4} and \eqref{secondline5} the second line of \eqref{prodbexplicit} satisfies
\[ \begin{split} \prod_{b=0}^{b_{\ell}(N)-1} &\frac{\prod_{n=1}^{q_{\ell}-1}f \left( \frac{n-y_n-z}{q_{\ell}} \right) / f \left( \frac{n-z}{q_{\ell}} \right)}{\prod_{n=1}^{q_{\ell}'-1}f \left( \frac{n-y_n'-z'}{q_{\ell}'} \right) / f \left( \frac{n-z'}{q_{\ell}'} \right)} \\ &= \exp \left( O \left( \frac{\log (a_1+1)}{q_{\ell}'} + \frac{a_2+\cdots +a_{\ell}}{\psi_{\ell}} + \frac{\psi_{\ell}}{(q_{\ell}')^2} + \frac{\psi_{\ell}^3}{(q_{\ell}')^3} + \frac{1}{q_{\ell -1}' (q_{\ell}')^{0.99}} \right) \right) . \end{split} \]
The optimal choice is
\[ \psi_{\ell} \approx (a_2+\cdots +a_{\ell})^{1/4} (q_{\ell}')^{3/4} , \]
which is easily seen to satisfy the required bounds $1 \le \psi_{\ell} < q_{\ell}'/2$. Discarding negligible error terms, we finally obtain
\[ \prod_{b=0}^{b_{\ell}(N)-1} \frac{\prod_{n=1}^{q_{\ell}-1}f \left( \frac{n-y_n-z}{q_{\ell}} \right) / f \left( \frac{n-z}{q_{\ell}} \right)}{\prod_{n=1}^{q_{\ell}'-1}f \left( \frac{n-y_n'-z'}{q_{\ell}'} \right) / f \left( \frac{n-z'}{q_{\ell}'} \right)} = \exp \left( O \left( \frac{\log (a_1+1)}{q_{\ell}'} +  \frac{(a_2+\cdots +a_{\ell})^{3/4}}{(q_{\ell}')^{3/4}} \right) \right) . \]
This finishes the estimation of the second line of \eqref{prodbexplicit}, and the proof of the proposition in the case when $q_{\ell}' \ge 3$.

We now indicate how to modify the estimate of the second line of \eqref{prodbexplicit} if $q_{\ell}' <3$. First of all note that if $q_{\ell}=2$, then
\[ y_1= \left( \left\{ \frac{q_{\ell -1}}{q_{\ell}} \right\} - \frac{1}{2} \right) q_{\ell} \| q_{\ell} \alpha \| =0 , \]
since we necessarily have $q_{\ell -1}=1$. Similarly, $q_{\ell}'=2$ implies $y_1'=0$. In particular, $\prod_{n=1}^{q_{\ell}'-1} f \left( \frac{n-y_n'-z'}{q_{\ell}'} \right) / f \left( \frac{n-z'}{q_{\ell}'} \right) =1$, as the product is either empty, or consists of the single factor $n=1$. We may thus assume that $q_{\ell} \ge 3$, otherwise the second line of \eqref{prodbexplicit} equals $1$, and we are done.

Following the steps above with $\psi_{\ell}=1$, as an analogue of \eqref{secondline1} we deduce
\[ \prod_{n=1}^{q_{\ell}-1} \frac{f \left( \frac{n-y_n-z}{q_{\ell}} \right)}{f \left( \frac{n-z}{q_{\ell}} \right)} = \exp \left( O \left( \frac{\log (a_1+1)}{a_{\ell +1}} \right) \right) \prod_{n \in \{ \pm 1 \}} \frac{f \left( \frac{n-y_n-z}{q_{\ell}} \right)}{f \left( \frac{n-z}{q_{\ell}} \right)} . \]
Clearly,
\[ f \left( \frac{n-y_n-z}{q_{\ell}} \right) = f \left( \frac{n-z}{q_{\ell}} \right) + O \left( \frac{1}{a_{\ell +1}q_{\ell}} \right) . \]
Under the assumptions
\[ \frac{q_{\ell} \| q_{\ell} \alpha \| /2}{|n-z|} \le 1-\frac{1}{c} \qquad \textrm{and} \qquad |n-z| \ge \frac{1}{d} \]
with some $c \ge 2$ and $d \ge 1$, we have $f \left( \frac{n-z}{q_{\ell}} \right) \gg 1/(dq_{\ell})$ and $f \left( \frac{n-y_n-z}{q_{\ell}} \right) / f \left( \frac{n-z}{q_{\ell}} \right) \gg 1/c$, thus \eqref{expestimateU/V} gives
\[ \frac{f \left( \frac{n-y_n-z}{q_{\ell}} \right)}{f \left( \frac{n-z}{q_{\ell}} \right)} = \exp \left( O \left( \frac{d}{a_{\ell +1}} + \frac{d^2 \log c}{a_{\ell +1}^2} \right) \right) . \]
Choosing $c,d$ optimally for $n=-1$ as above, we deduce
\[ \frac{f \left( \frac{-1-y_{-1}-z}{q_{\ell}} \right)}{f \left( \frac{-1-z}{q_{\ell}} \right)} = \exp \left( O \left( I_{\{ b \ge 1 \}}\frac{1}{a_{\ell +1}} + I_{\{ b=0 \}} \left( \frac{1}{a_{\ell +1}} + \frac{I_{\{ b_{\ell +1}(N) >0.99 a_{\ell +2} \}}a_{\ell +2}}{a_{\ell +1}^2} \right) \right) \right) , \]
hence by assumption (ii),
\[ \prod_{b=0}^{b_{\ell}(N)-1} \frac{f \left( \frac{-1-y_{-1}-z}{q_{\ell}} \right)}{f \left( \frac{-1-z}{q_{\ell}} \right)} = \exp \left( O(1) \right) . \]
Choosing $c,d$ optimally for $n=1$ as above, we deduce
\[ \frac{f \left( \frac{1-y_1-z}{q_{\ell}} \right)}{f \left( \frac{1-z}{q_{\ell}} \right)} = \exp \left( O \left( \frac{1}{a_{\ell +1}} + I_{\{ b_{\ell}(N)>0.99 a_{\ell +1} \}} (1+\log a_{\ell}) \right) \right) , \]
hence by assumption (i),
\[ \prod_{b=0}^{b_{\ell}(N)-1} \frac{f \left( \frac{1-y_1-z}{q_{\ell}} \right)}{f \left( \frac{1-z}{q_{\ell}} \right)} = \exp \left( O \left( 1+\log a_{\ell} \right) \right) . \]
Here either $\ell =1$, or $a_{\ell} \le q_{\ell}' <3$. Combining the previous estimates, the second line of \eqref{prodbexplicit} thus satisfies
\[ \prod_{b=0}^{b_{\ell}(N)-1} \frac{\prod_{n=1}^{q_{\ell}-1}f \left( \frac{n-y_n-z}{q_{\ell}} \right) / f \left( \frac{n-z}{q_{\ell}} \right)}{\prod_{n=1}^{q_{\ell}'-1}f \left( \frac{n-y_n'-z'}{q_{\ell}'} \right) / f \left( \frac{n-z'}{q_{\ell}'} \right)} = \exp \left( O \left( 1+\log a_1 \right) \right) . \]
This finishes the estimation of the second line of \eqref{prodbexplicit} and the proof of the proposition when $q_{\ell}'<3$.
\end{proof}

\section{Zagier's function $h(x)$}  \label{zag_sec}

\subsection{Continuity} \label{cont_section}

Let $\alpha=[0;a_1,a_2,\dots ]$ be irrational with convergents $p_{\ell}/q_{\ell}=[0;a_1, a_2, \dots, a_{\ell}]$, and let
\[ \xi_k := \frac{\sqrt{\log (1+a_{k+1})}}{\sqrt{a_{k+1}}} \left( 1+\log \max_{1 \le m \le k} a_m \right). \]
Let
\[ I_{k+1} := \left\{ [0;c_1,c_2, \dots ] \, : \, c_m = a_m \textrm{ for all } 1 \le m \le k+1 \right\} \]
denote the set of real numbers in $[0,1]$ whose first $k+1$ partial quotients are the same as those of $\alpha$. Recall that $I_{k+1}$ is an interval with rational endpoints $p_{k+1}/q_{k+1}$ and $(p_{k+1}+p_k)/(q_{k+1}+q_k)$, so for every $k$ the irrational $\alpha$ is an interior point of $I_{k+1}$.
\begin{thm}\label{quantitativecontinuitytheorem} Let $k \ge A^{-1} \log \log (a_1+2)$ be such that $\xi_k \le A$ with a suitably small universal constant $A>0$. Then
\[ \sup_{r \in I_{k+1} \cap \mathbb{Q}} h(r) - \inf_{r \in I_{k+1} \cap \mathbb{Q}} h(r) \ll \xi_k + \frac{(a_2+\cdots +a_k)^{3/4}}{(q_k/a_1)^{3/4}} + \frac{\log (a_1+1)}{q_k/a_1} \]
with a universal implied constant.
\end{thm}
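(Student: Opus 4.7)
The plan is to run the strategy sketched at the end of Section \ref{heu_section}: apply the approximate factorization of Proposition \ref{kashaevfactorprop} to both $\mathbf{J}_{4_1}(r)$ and $\mathbf{J}_{4_1}(r')$ for arbitrary $r \in I_{k+1} \cap \mathbb{Q}$, peel off the ``head'' which depends only on the shared partial quotients $a_1,\dots,a_{k+1}$, and use Proposition \ref{tailprop} together with Corollary \ref{5/6corollary} to show that the $r$-dependent ``tail'' contributes only the stated error.

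More concretely, fix $r = [0;a_1,\dots,a_{L_r}] \in I_{k+1} \cap \mathbb{Q}$ and $r' = \{1/r\} = [0;a_2,\dots,a_{L_r}]$, so that $L_r \ge k+1$ and $L_{r'} = L_r - 1 \ge k$. Since the paper's indexing gives $a'_m = a_{m+1}$, one has $\xi'_{k-1} \le \xi_k$, so Proposition \ref{kashaevfactorprop} applies to $r$ at index $k$ and to $r'$ at index $k-1$, yielding
\[ \mathbf{J}_{4_1}(r) = A_k \cdot B_k(r) \cdot (1+O(\xi_k)), \qquad \mathbf{J}_{4_1}(r') = A'_{k-1} \cdot B'_{k-1}(r') \cdot (1+O(\xi_k)), \]
where $A_k$ depends only on $p_k/q_k$, $A'_{k-1}$ depends only on $p'_{k-1}/q'_{k-1}$, and $B_k(r)$, $B'_{k-1}(r')$ are the tail sums with initial Ostrowski digits zero. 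Because all $r \in I_{k+1} \cap \mathbb{Q}$ share the convergents $p_\ell/q_\ell$ and $p'_\ell/q'_\ell$ for $\ell \le k+1$, the head ratio $\log(A_k/A'_{k-1})$ is constant on $I_{k+1} \cap \mathbb{Q}$. Hence
\[ h(r) = \log(A_k/A'_{k-1}) + \log(B_k(r)/B'_{k-1}(r')) + O(\xi_k), \]
and proving the theorem reduces to controlling $\log(B_k(r)/B'_{k-1}(r'))$ uniformly.

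The next step will be to invoke the bijection $N \mapsto N'$ from Section \ref{tailsection}, which, restricted to $\{N : b_0(N) = \cdots = b_{k-1}(N)=0\}$, maps onto the index set of $B'_{k-1}(r')$. Via the product formula \eqref{pn/pn'productform}, each pair $P_N(r)/P_{N'}(r')$ factors as a product of the ratios estimated in Proposition \ref{tailprop} over levels $\ell = k, k+1, \dots, L_r-1$. Assuming the hypotheses of Proposition \ref{tailprop} hold at every level, I would telescope the bounds: the contribution at level $\ell$ is $O((a_2 + \cdots + a_\ell)^{3/4}/(q'_\ell)^{3/4} + \log(a_1+1)/q'_\ell)$, and since $q'_\ell$ grows at least like Fibonacci (hence exponentially in $\ell - k$) while the sums of partial quotients grow only polynomially in this geometric rate, the sum over $\ell \ge k$ is dominated by the $\ell = k$ term. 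Using $q'_k \asymp q_k/a_1$ this yields exactly the two non-$\xi_k$ terms in the statement, uniformly over $N$, and therefore $\log(B_k(r)/B'_{k-1}(r')) = O((a_2+\cdots+a_k)^{3/4}/(q_k/a_1)^{3/4} + \log(a_1+1)/(q_k/a_1))$.

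The main obstacle is that Proposition \ref{tailprop}'s assumptions (i)--(ii) are not automatic: at a level $\ell \ge k$ with $a_{\ell+1} > (q'_\ell)^{1/100}$ and $b_\ell(N) > 0.99 a_{\ell+1}$ the bound fails. To handle this, I plan to partition the sums $B_k(r)$ and $B'_{k-1}(r')$ into a ``good'' part, on which Proposition \ref{tailprop} applies at every level, and a ``bad'' part, and show the latter is negligible. This is where Corollary \ref{5/6corollary}, particularly \eqref{cor2}, is crucial: when $a_{\ell+1}$ is so large that $a_{\ell+1} > (q'_\ell)^{1/100}$, the hypothesis \eqref{ak+1condition} is automatic (given $\xi_k \le A$, which dominates $\log \max_{m \le \ell}a_m$), and \eqref{cor2} kills the contribution of $N$ with $|b_\ell(N) - b_\ell^*| \ge 10\sqrt{a_{\ell+1}\log a_{\ell+1}}$ up to a factor $a_{\ell+1}^{-20}$. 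Summing these negligible losses over the (few) problematic levels and applying the good-part estimate on the complement delivers $B_k(r)/B'_{k-1}(r') = \exp(O(\text{error}))$ with the desired error bound. Finally, the assumption $k \ge A^{-1}\log\log(a_1+2)$ ensures $q'_k \gg \log(a_1+1)$, which is what makes the $\log(a_1+1)/(q_k/a_1)$ term small; combining with the $O(\xi_k)$ from Step 1 yields the theorem by the triangle inequality applied to two arbitrary $r_1,r_2 \in I_{k+1}\cap\mathbb{Q}$.
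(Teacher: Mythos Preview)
Your plan is correct and matches the paper's proof closely: factor via Proposition \ref{kashaevfactorprop}, isolate the $r$-independent head, restrict to a ``good'' set on which Proposition \ref{tailprop} applies at every level $\ell \ge k$, and telescope the tail bounds using $q'_\ell \gg q'_k \asymp q_k/a_1$. Two small points need adjusting. First, Corollary \ref{5/6corollary} \eqref{cor2} as stated has the zero-digit constraint and the large partial quotient at the \emph{same} index, whereas you need the zero-digit constraint at level $k$ but the large quotient at a variable level $\ell \ge k$; the paper sidesteps this by invoking Proposition \ref{local5/6prop} directly (the injection $N \mapsto N^*$ there only alters digits at index $\ell$ or $\ell+1$, so it preserves $b_0 = \cdots = b_{k-1} = 0$). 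Second, your justification ``given $\xi_k \le A$, which dominates $\log \max_{m \le \ell} a_m$'' is wrong for $\ell > k$: the bound $\xi_k \le A$ only controls partial quotients up to index $k$. What actually makes the $5/6$-principle applicable at a level $\ell \ge k$ with $c_{\ell+1} > (\overline{q}_\ell')^{1/100}$ is that $\log \max_{m \le \ell} c_m \ll \log a_1 + \log \overline{q}_\ell'$, and the hypothesis $k \ge A^{-1}\log\log(a_1+2)$ forces $q_k'$ (hence every $\overline{q}_\ell'$ for $\ell \ge k$) to be large enough that $(\overline{q}_\ell')^{1/100}$ dominates both terms. This, not the cosmetic remark at the end of your proposal, is where the lower bound on $k$ enters the argument.
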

Note that here the last two terms in the upper bound converge to $0$:
\[ \lim_{k \to \infty} \left( \frac{(a_2+\cdots +a_k)^{3/4}}{(q_k/a_1)^{3/4}} + \frac{\log (a_1+1)}{q_k/a_1} \right) =0 . \]
If $\sup_{k \ge 1} a_k =\infty$, then $\xi_k \to 0$ along a suitable subsequence, consequently $h$ can be extended to $\mathbb{Q} \cup \{ \alpha \}$ so that it is continuous at $\alpha$. This establishes Zagier's continuity conjecture at all non-badly approximable irrationals, and proves Theorem \ref{th1}. If $\alpha$ is badly approximable, then letting $a:= \limsup_{k \to \infty} a_k$ denote the largest integer which appears in its continued fraction expansion infinitely many times, Theorem \ref{quantitativecontinuitytheorem} gives
\[ \limsup_{x \to \alpha} h(x) - \liminf_{x \to \alpha} h(x) \ll \frac{(\log a)^{3/2}}{\sqrt{a}} , \]
a weaker form of the conjecture which falls short of implying continuity at $\alpha$.

\begin{proof}[Proof of Theorem \ref{quantitativecontinuitytheorem}] Let $\alpha':= \{ 1/\alpha \}=[0;a_2,a_3,\dots]$. Let $r=[0;c_1,c_2, \dots, c_L] \in I_{k+1} \cap \mathbb{Q}$ be arbitrary with convergents $\overline{p}_{\ell}/\overline{q}_{\ell}=[0;c_1,c_2, \dots, c_{\ell}]$, and define $r':= \{ 1/r \}=[0;c_2,c_3,\dots ,c_L]$ with convergents $\overline{p}_{\ell}'/\overline{q}_{\ell}'=[0;c_2,c_3, \dots, c_{\ell}]$. By the assumption $r \in I_{k+1}$, we have $\overline{p}_{\ell}/\overline{q}_{\ell} =p_{\ell}/q_{\ell}$ and $\overline{p}_{\ell}'/\overline{q}_{\ell}' = p_{\ell}'/q_{\ell}'$ for all $\ell \le k+1$.

Let us apply Proposition \ref{kashaevfactorprop} to $r$ and to $r'$:
\[ \begin{split} \sum_{0 \le N <\overline{q}_L} P_N(r)^2 = &\left( \sum_{0 \le N <q_k} P_N \left( \frac{p_k}{q_k}, (-1)^k \frac{5/6}{q_k} \right)^2 \right) \left( \sum_{\substack{0 \le N < \overline{q}_L \\ b_0(N)=b_1(N)=\cdots =b_{k-1}(N)=0}} P_N(r)^2 \right) \\ &\times (1+O(\xi_k )) , \end{split} \]
and
\[ \begin{split} \sum_{0 \le N <\overline{q}_L'} P_N(r')^2 = &\left( \sum_{0 \le N <q_k'} P_N \left( \frac{p_k'}{q_k'}, (-1)^{k-1} \frac{5/6}{q_k'} \right)^2 \right) \left( \sum_{\substack{0 \le N < \overline{q}_L' \\ b_1'(N)=b_2'(N)=\cdots =b_{k-1}'(N)=0}} P_N(r')^2 \right) \\ &\times (1+O(\xi_k )) . \end{split} \]
Note that here $N=\sum_{\ell =0}^{L-1} b_{\ell}(N) \overline{q}_{\ell}$ denotes the Ostrowski expansion of an integer $0 \le N <\overline{q}_L$ with respect to $r$, whereas $N=\sum_{\ell =1}^{L-1} b_{\ell}'(N) \overline{q}_{\ell}'$ denotes the Ostrowski expansion of an integer $0 \le N <\overline{q}_L'$ with respect to $r'$. Hence
\begin{equation}\label{hrestimate}
h(r) = \log \frac{\sum_{0 \le N <\overline{q}_L} P_N(r)^2}{\sum_{0 \le N <\overline{q}_L'} P_N(r')^2} = M_k(\alpha ) + \log \frac{\displaystyle{\sum_{\substack{0 \le N < \overline{q}_L \\ b_0(N)=b_1(N)=\cdots =b_{k-1}(N)=0}} P_N(r)^2}}{\displaystyle{\sum_{\substack{0 \le N < \overline{q}_L' \\ b_1'(N)=b_2'(N)=\cdots =b_{k-1}'(N)=0}} P_N(r')^2}} + O (\xi_k ) ,
\end{equation}
where
\[ M_k(\alpha ) := \log \frac{\sum_{0 \le N <q_k} P_N \left( \frac{p_k}{q_k}, (-1)^k \frac{5/6}{q_k} \right)^2}{\sum_{0 \le N <q_k'} P_N \left( \frac{p_k'}{q_k'}, (-1)^{k-1} \frac{5/6}{q_k'} \right)^2} . \]
The crucial observation is that $M_k(\alpha )$ does not depend on $r$, but only on the first $k$ partial quotients which are the same throughout $I_{k+1}$. It remains to estimate the second term in \eqref{hrestimate} uniformly in $r$.

Let $H$ denote the set of all integers $0 \le N < \overline{q}_L$ such that $b_0(N)=b_1(N)=\cdots =b_{k-1}(N)=0$, and $b_{\ell} (N) \le 0.99 c_{\ell +1}$ for every $k \le \ell \le L-1$ such that $c_{\ell +1} > (\overline{q}_{\ell}')^{1/100}$. Similarly, let $H'$ denote the set of all integers $0 \le N < \overline{q}_L'$ such that $b_1'(N)=b_2'(N)=\cdots =b_{k-1}'(N)=0$, and $b_{\ell}' (N) \le 0.99 c_{\ell +1}$ for every $k \le \ell \le L-1$ such that $c_{\ell +1} > (\overline{q}_{\ell}')^{1/100}$. Note that $0.2326 \cdot (5/6-0.99)^2>0.005$, and that by choosing $A$ small enough, $1+\log \max_{1 \le m \le \ell} c_m$ is negligible compared to $(\overline{q}_{\ell}')^{1/100}$. Applying Proposition \ref{local5/6prop} to all $k \le \ell \le L-1$ such that $c_{\ell +1}>(\overline{q}_{\ell}')^{1/100}$ thus shows that
\[ \begin{split} \sum_{\substack{0 \le N < \overline{q}_L \\ b_0(N)=b_1(N)=\cdots =b_{k-1}(N)=0}} P_N(r)^2 &= \left( 1+O \left( \sum_{\substack{k \le \ell \le L-1 \\ c_{\ell +1}> (\overline{q}_{\ell}')^{1/100}}} c_{\ell +1} e^{-0.005 c_{\ell +1}} \right) \right) \sum_{N \in H} P_N (r)^2 \\ &= \left( 1+O \left( q_k' e^{-0.005 (q_k')^{1/100}} \right) \right) \sum_{N \in H} P_N (r)^2 , \end{split} \]
and similarly
\[ \sum_{\substack{0 \le N < \overline{q}_L' \\ b_1'(N)=b_2'(N)=\cdots =b_{k-1}'(N)=0}} P_N(r')^2 = \left( 1+O \left( q_k' e^{-0.005 (q_k')^{1/100}} \right) \right) \sum_{N \in H'} P_N (r')^2 . \]
The map $N=\sum_{\ell=0}^{L-1}b_{\ell}(N) \overline{q}_{\ell} \mapsto N':= \sum_{\ell =1}^{L-1} b_{\ell}(N) \overline{q}_{\ell}'$, as introduced in Section \ref{tailsection}, is a bijection from $H$ to $H'$ (note that for $N \in H$ we have $b_1(N)= 0$, so the mapping $N \mapsto N'$ is indeed well-defined). By the product form \eqref{pn/pn'productform} and Proposition \ref{tailprop}, for any $N \in H$ we have
\[ \begin{split} \frac{P_N(r)}{P_{N'}(r')} &= \prod_{\ell =k}^{L-1} \prod_{b=0}^{b_{\ell}(N)-1} \frac{P_{\overline{q}_{\ell}} \left( r, (-1)^{\ell} \frac{b \overline{q}_{\ell} \| \overline{q}_{\ell} r \| + \overline{\varepsilon}_{\ell}(N)}{\overline{q}_{\ell}} \right)}{P_{\overline{q}_{\ell}'} \left( r', (-1)^{\ell -1} \frac{b \overline{q}_{\ell}' \| \overline{q}_{\ell}' r' \| + \overline{\varepsilon}_{\ell}'(N)}{\overline{q}_{\ell}'} \right)} \\ &= \prod_{\ell =k}^{L-1} \exp \left( O \left( \frac{(c_2+\cdots +c_{\ell})^{3/4}}{(\overline{q}_{\ell}')^{3/4}} + \frac{\log (c_1+1)}{q_{\ell}'} \right) \right) \\ &= \exp \left( O \left( \frac{(a_2+\cdots +a_k)^{3/4}}{(q_k')^{3/4}} + \frac{\log (a_1+1)}{q_k'} \right) \right) . \end{split} \]
Consequently,
\[ \begin{split} \frac{\displaystyle{\sum_{\substack{0 \le N < \overline{q}_L \\ b_0(N)=b_1(N)=\cdots =b_{k-1}(N)=0}} P_N(r)^2}}{\displaystyle{\sum_{\substack{0 \le N < \overline{q}_L' \\ b_1'(N)=b_2'(N)=\cdots =b_{k-1}'(N)=0}} P_N(r')^2}} &= \left( 1+O \left( q_k' e^{-0.005 (q_k')^{1/100}} \right) \right) \frac{\sum_{N \in H} P_N(r)^2}{\sum_{N \in H'} P_N(r')^2} \\ &= \exp \left( O \left( \frac{(a_2+\cdots +a_k)^{3/4}}{(q_k')^{3/4}} + \frac{\log (a_1+1)}{q_k'} \right) \right) , \end{split} \]
and \eqref{hrestimate} simplifies to
\[ h(r)=M_k(\alpha ) + O \left( \xi_k + \frac{(a_2+\cdots +a_k)^{3/4}}{(q_k')^{3/4}} + \frac{\log (a_1+1)}{q_k'} \right) . \]
Here $q_k' \gg q_k /a_1$, and $M_k(\alpha )$ does not depend on $r$, leading to the upper bound for the oscillation of $h(r)$ on $I_{k+1} \cap \mathbb{Q}$ in the claim.
\end{proof}

\subsection{Asymptotics} \label{asy_section}

\begin{proof}[Proof of Theorem \ref{hasymptoticstheorem}] Let $r$ be a rational in $(0,1)$, and let $r =[0;a_1,a_2,\dots, a_L]$ be its continued fraction expansion, with convergents $p_{\ell}/q_{\ell}=[0;a_1,a_2,\dots , a_{\ell}]$. Let $r':=\{ 1/r \}=[0;a_2,a_3,\dots, a_L]$, with convergents $p_{\ell}'/q_{\ell}'=[0;a_2,a_3,\dots, a_{\ell}]$. The Ostrowski expansion of $0 \le N <q_L$ with respect to $r$ will be written as $N=\sum_{\ell =0}^{L-1} b_{\ell}(N) q_{\ell}$, while that of $0 \le N <q_L'$ with respect to $r'$ as $N=\sum_{\ell =1}^{L-1} b_{\ell}'(N) q_{\ell}'$. The claim of the theorem can be equivalently stated as
\[ h(r) = \frac{\V}{2 \pi} a_1 + O (\log (a_1+1)) . \]

Let $D>0$ be a suitably large universal constant. Let $R$ denote the set of all integers $0 \le N <q_L$ such that $I_{\{ a_2>a_1 +D \}} b_1(N) \le 0.99 a_2$ and $I_{\{ a_2=1 \}}I_{\{ a_3>a_1 +D \}} b_2(N) \le 0.99 a_3$. Applying Corollary \ref{5/6corollary} with $k=1$ and $k=2$ yields the rough estimate
\[ \sum_{0 \le N < q_L} P_N(r)^2 \ll \sum_{N \in R} P_N(r)^2 . \]
Letting $R'$ denote the set of all integers $0 \le N <q_L'$ such that $I_{\{ a_2>a_1+D \}} b_1'(N) \le 0.99 a_2$ and $I_{\{ a_2=1 \}}I_{\{ a_3 > a_1+D \}} b_2'(N) \le 0.99 a_3$, we similarly deduce
\[ \sum_{0 \le N < q_L'} P_N(r')^2 \ll \sum_{N \in R'} P_N(r')^2 , \]
therefore
\begin{equation}\label{hrestimate1}
h(r) = \log \frac{\sum_{0 \le N <q_L}P_N(r)^2}{\sum_{0 \le N <q_L'}P_N(r')^2} = \log \frac{\sum_{N \in R} P_N(r)^2}{\sum_{N \in R'}P_N(r')^2} +O(1) .
\end{equation}
Set $b_0^*:=\lfloor (5/6) a_1 \rfloor$ and $f(x)=|2 \sin (\pi x)|$, as usual, and let
\[ H_{\ell}(c):= \left\{ 0 \le N < q_L \, : \, b_{\ell}(N)=c \right\} . \]
We distinguish between the cases $a_1>D$ and $a_1 \le D$.\\

\noindent\textbf{Case 1.} Assume that $a_1>D$. Corollary \ref{5/6corollary} shows that
\[ \sum_{N \in R} P_N(r)^2 \ll \sum_{\substack{N \in R \\ |b_0(N) -b_0^*|<10 \sqrt{a_1 \log a_1}}} P_N(r)^2 . \]
Hence in the numerator of \eqref{hrestimate1} it is enough to keep those $N$ for which $|b_0(N)-b_0^*|<10 \sqrt{a_1 \log a_1}$, leading to
\begin{equation}\label{hrestimate2}
h(r) = \log \sum_{\substack{c \in \mathbb{N} \\ |c-b_0^*|< 10 \sqrt{a_1 \log a_1}}} \frac{\sum_{N \in H_0(c) \cap R} P_N(r)^2}{\sum_{N \in R'}P_N(r')^2} +O(1).
\end{equation}
The map $N=\sum_{\ell =0}^{L-1}b_{\ell}(N)q_{\ell} \mapsto N':= \sum_{\ell =1}^{L-1} b_{\ell}(N) q_{\ell}'$ introduced in Section \ref{tailsection} is a bijection from $H_0(c) \cap R$ to $R'$, for those $c$ which are in the range of the summation in \eqref{hrestimate2}. To see this, note that choosing $D$ sufficiently large and using the fact that $a_1 > D$ by assumption, the condition $|c-b_0^*|< 10 \sqrt{a_1 \log a_1}$ ensures that $c>0$, and consequently $b_0 (N) >0$ whence $b_1(N) < a_2$; thus the mapping $N \mapsto N'$ is indeed well-defined. By the product form \eqref{pn/pn'productform} and Proposition \ref{tailprop}, for any $N \in H_0(c) \cap R$ we have
\[ \begin{split} \frac{P_N(r)}{P_{N'}(r')} &= \prod_{b=0}^{c-1} P_{q_0} \left( r, \frac{b q_0 \| q_0 r \| + \varepsilon_0(N)}{q_0} \right) \prod_{\ell =1}^{L-1} \exp \left( O \left( \frac{(a_2+\cdots + a_{\ell})^{3/4}}{(q_{\ell}')^{3/4}} + \frac{\log a_1}{q_{\ell}'} \right) \right) \\ &= \left( \prod_{b=0}^{c-1} f \left( (b+1) r +\varepsilon_0(N) \right) \right) \exp (O( \log a_1 ) ) . \end{split} \]
Here $(b+1)r+\varepsilon_0(N)$ is bounded away from $1$. By the definition of $R$, either $a_2 \ll a_1$ or $b_1(N) \le 0.99 a_2$, therefore Lemma \ref{epsilonlemma} gives
\[ (b+1)r + \varepsilon_0(N) \ge r+\varepsilon_0(N) \gg \frac{1}{a_1} . \]
Comparing the sum to the corresponding Riemann integral, we thus get
\[ \sum_{b=0}^{c-1} \log f \left( (b+1)r + \varepsilon_0(N) \right) = a_1 \int_0^{c/a_1} \log f(x) \, \mathrm{d} x + O(\log a_1) . \]
Here $c/a_1 = 5/6+O(\sqrt{(\log a_1)/a_1})$, hence the Taylor expansion
\[ \int_0^y \log f(x) \, \mathrm{d}x = \int_0^{5/6} \log f(x) \, \mathrm{d}x + O((y-5/6)^2) \]
and the definition \eqref{volume_4_1} of $\V$ show that
\[ \sum_{b=0}^{c-1} \log f \left( (b+1)r+\varepsilon_0(N) \right) = \frac{\V}{4 \pi} a_1 + O (\log a_1) . \]
For any $N \in H_0(c) \cap R$ we thus have
\[ \frac{P_N(r)}{P_{N'}(r')} = \exp \left( \frac{\V}{4 \pi} a_1 + O(\log a_1) \right) , \]
consequently
\[ \frac{\sum_{N \in H_0(c) \cap R} P_N(r)^2}{\sum_{N \in R'} P_N(r')^2} = \exp \left( \frac{\V}{2 \pi} a_1 + O(\log a_1) \right) . \]
Summing over all $c \in \mathbb{N}$ such that $|c-b_0^*|<10 \sqrt{a_1 \log a_1}$, estimate \eqref{hrestimate2} simplifies to
\[ h(r) = \frac{\V}{2 \pi} a_1 + O(\log a_1), \]
as claimed.\\

\noindent\textbf{Case 2.} Assume that $a_1 \le D$. We first claim that
\begin{equation}\label{b1<a2}
\sum_{N \in R} P_N(r)^2 \ll \sum_{\substack{N \in R \\ b_1(N)<a_2}} P_N(r)^2 .
\end{equation}
To any $N \in R$ such that $b_1(N)=a_2$, let us associate $N^*:=N-q_1$. Note that $N^*$ is obtained from $N$ by reducing the Ostrowski digit $b_1(N)=a_2$ by $1$, i.e.\ $b_1(N^*)=a_2-1$ and $b_{\ell}(N^*)=b_{\ell}(N)$ for all $\ell \neq 1$. We necessarily have $b_0(N)=b_0(N^*)=0$, and by definition \eqref{epsilondef}, $\varepsilon_{\ell}(N^*)=\varepsilon_{\ell}(N)$ for all $\ell \ge 1$. The product form \eqref{pnproductform} thus gives
\[ \log P_{N^*}(r) - \log P_N(r) = - \log P_{q_1} \left( r, \frac{(a_2-1)q_2 \| q_2 r \| + \varepsilon_2(N)}{q_2} \right) . \]
Following the steps in the proof of Proposition \ref{local5/6prop} (ii), here
\[ \begin{split} -\log P_{q_1} \left( r, \frac{(a_2-1)q_2 \| q_2 r \| + \varepsilon_2(N)}{q_2} \right) &\ge -C \left( 1+I_{\{ a_2=1 \}}I_{\{ b_2(N)>0.99 a_3 \}} a_3 +\log a_1 \right) \\ &\ge -C , \end{split} \]
hence $P_N(r) \ll P_{N^*}(r)$. Since the map $N \mapsto N^*$ is an injection from $\{ N \in R \, : \, b_1(N)=a_2 \}$ to $\{ N \in R \, : \, b_1(N)<a_2 \}$, the estimate \eqref{b1<a2} follows.

In particular, \eqref{hrestimate1} gives
\begin{equation}\label{hrestimate3}
h(r) = \log \sum_{c=0}^{a_1-1} \frac{\sum_{N \in H_0(c) \cap R, \,\, b_1(N)<a_2} P_N(r)^2}{\sum_{N \in R'} P_N(r')^2} + O(1) .
\end{equation}
The map $N=\sum_{\ell =0}^{L-1} b_{\ell}(N) q_{\ell} \mapsto N':= \sum_{\ell =1}^{L-1} b_{\ell}(N) q_{\ell}'$ introduced in Section \ref{tailsection} is a bijection from $\{ N \in H_0(c) \cap R \, : \, b_1(N)<a_2 \}$ to $R'$ (where the condition $b_1(N) < a_2$ ensures that the mapping $N \mapsto N'$ is indeed well-defined). The product form \eqref{pn/pn'productform} and Proposition \ref{tailprop} now yield
\[ \frac{P_N(r)}{P_{N'}(r')} = \left( \prod_{b=0}^{c-1} f \left( (b+1)r+\varepsilon_0(N) \right) \right) \exp (O(1)) . \]
By the definition of $R$, either $a_2 \ll 1$ or $b_1(N) \le 0.99 a_2$, therefore Lemma \ref{epsilonlemma} gives
\[ (b+1)r+\varepsilon_0(N) \ge r+\varepsilon_0(N) \gg 1. \]
The same points are also bounded away from $1$:
\[ (b+1)r+\varepsilon_0(N) \le (a_1-1)r+\| a_1 r \| \le 1-r, \]
therefore $1 \ll \prod_{b=0}^{c-1} f((b+1)r+\varepsilon_0(N)) \ll 1$. In particular, $1 \ll P_N(r)/P_{N'}(r') \ll 1$, and we obtain
\[ 1 \ll \frac{\sum_{N \in H_0(c) \cap R, \,\, b_1(N)<a_2} P_N(r)^2}{\sum_{N \in R'} P_N(r')^2} \ll 1 . \]
The estimate \eqref{hrestimate3} thus simplifies to $h(r)=O(1)$, as claimed.
\end{proof}

\section{Value distribution of $\mathbf{J}_{4_1}$} \label{41_section}

In this section we give the proof of Theorem \ref{th2}. We follow the strategy in \cite[Theorem 4]{BD1}. However, since we will have to work with weaker assumptions than those which are presupposed there, several modifications of the argument are necessary. 

\begin{proof}[Proof of Theorem \ref{th2}] Let $Tx=\{ 1/x \} ~(x \neq 0)$, $T0=0$ denote the Gauss map on the interval $[0,1)$. Let $r \in (0,1)$ be a rational number with continued fraction expansion $r=[0;a_1, \ldots, a_L]$. By definition, $h(x)=\log \mathbf{J}_{4_1} (x) - \log \mathbf{J}_{4_1} (Tx)$ and $\log \mathbf{J}_{4_1} (0)=0$, hence
\[ \begin{split} \log \mathbf{J}_{4_1} (r) &= \sum_{\ell=1}^L h(T^{\ell-1}r) \\ &=\sum_{\ell=1}^L \left( \frac{\textup{Vol}(4_1)}{2 \pi T^{\ell-1}r} + h(T^{\ell-1}r) - \frac{\textup{Vol}(4_1)}{2 \pi T^{\ell-1}r} \right) \\ &= \sum_{\ell=1}^L \left( \frac{\textup{Vol}(4_1)}{2 \pi}a_{\ell} + h(T^{\ell-1}r) - \frac{\textup{Vol}(4_1)}{2 \pi T^{\ell-1}r} + \frac{\textup{Vol}(4_1)}{2 \pi} T^{\ell-1}r \right) - \frac{\textup{Vol}(4_1)}{2 \pi} r, \end{split} \]
where we used that $1/T^{\ell-1}r = [a_{\ell}; a_{\ell+1}, \ldots, a_L] = a_{\ell} + T^{\ell} r$. Setting


$$
\psi^*(x) := h(x) - \frac{\textup{Vol}(4_1)}{2 \pi x} +\frac{\textup{Vol}(4_1)}{2 \pi} x,
$$
this yields
$$
\log \mathbf{J}_{4_1} (r) = \frac{\textup{Vol}(4_1)}{2 \pi} \sum_{\ell=1}^L a_\ell + \sum_{\ell=1}^L \psi^*(T^{\ell-1}r) - \frac{\textup{Vol}(4_1)}{2 \pi} r.
$$
The last term is bounded. By Case (4) of \cite[Theorem 9.4]{BD3}, the limit distribution of
\begin{equation} \label{scale}
\frac{\pi \sum_{\ell=1}^L a_\ell}{6 \log N} - \frac{2 \log \log N - 2 \gamma_0 + 2 \log(6/\pi)}{\pi}
\end{equation}
(with respect to the normalized counting measure on $F_N$, as $N \to \infty$) is the standard stable distribution with stability parameter $1$ and skewness parameter $1$; here $\gamma_0$ denotes Euler's constant.

By Corollary \ref{cor_th1}, the function $h(x)$ can be extended to a function which is almost everywhere continuous. Let $\varepsilon>0$ be given, and choose $\eta = \eta(\varepsilon) >0$ ``small.'' By Theorem \ref{th3}, the function $\psi^*(x)$ is bounded on $[\eta,1]$, and thus by the Lebesgue integrability condition it is Riemann integrable on $[\eta,1]$. Consequently, there are step functions $f^-$ and $f^+$ such that 
$$
f^-(x) \leq \psi^*(x) \leq f^+(x), \qquad \eta \leq x \leq 1,
$$
and such that
\begin{equation}  \label{fint}
\int_{\eta}^1 \left(f^+(x) - f^-(x) \right) \, \mathrm{d}x \leq \varepsilon/2.
\end{equation}
By Theorem \ref{th3}, we also have
$$
\left|\psi^*(x) \right| \leq c |\log x|
$$
for some sufficiently large constant $c$. A simple approximation argument then shows that we can find functions $g^-$ and $g^+$ such that
\begin{itemize}
 \item $g^-(x) \leq \psi^*(x) \leq g^+(x)$ for $x \in (0,1)$, 
 \item $g^- (x) = c \log x$ and $g^+(x) = c |\log x|$ for $x \in (0,\eta]$,
 \item $g^-(x)$ and $g^+(x)$ are Lipschitz-continuous on $[\eta,1]$,
 \item $\int_0^1 \left( g^+(x) - g^-(x) \right) \, \mathrm{d}x \leq \varepsilon$, provided that $\eta = \eta(\varepsilon)$ was chosen sufficiently small. 
\end{itemize}
The way to obtain $g^-$ and $g^+$ is to start with $c \log x$ resp.\ $c |\log x|$ on the interval $(0,\eta]$ and with $f^-$ resp.\ $f^+$ on the interval $[\eta, 1]$, and then combine these parts into a function that is Lipschitz-continuous on $[\eta,1]$ by ``gluing together'' these functions at the discontinuities. This does not cause a problem, as there are only finitely many such discontinuities. The last property can be satisfied because of \eqref{fint}, together with the fact that $\int_0^\eta |\log x| \, \mathrm{d}x$ can be made arbitrarily small by choosing $\eta$ as small as necessary.

For an application of \cite[Corollary 3.2]{BD3} it is necessary to check that
\begin{equation} \label{bd_condition}
\sum_{n=1}^\infty \frac{1}{n^2} \left( \sup_{x \in [ \frac{1}{n+1} ,\frac{1}{n}]} |g^\pm (x) |^{\alpha_0} + \sup_{x,y \in   [\frac{1}{n+1} ,\frac{1}{n}]} \frac{|g^\pm(x) - g^\pm(y) |^{\lambda_0}}{|x-y|^{\kappa_0\lambda_0}} \right) < \infty
\end{equation}
holds true for each of the two functions $g^-$ and $g^+$ defined above, for some suitable constants $\alpha_0 > 2$ and $\lambda_0,\kappa_0 > 0$. This is indeed easily seen to be the case, since by construction the functions $g^-$ and $g^+$ are Lipschitz-continuous on $[\eta,1]$, and since they are $\pm c \log x$ near zero; we can choose for example the parameters $\alpha_0=3$, $\lambda_0=1/2$, $\kappa_0=1$ in \eqref{bd_condition}. An application of \cite[Corollary 3.2]{BD3} thus shows that for 
$$
\mu^- := \frac{12}{\pi^2} \int_0^1 \frac{g^-(x)}{1+x} \, \mathrm{d}x
$$
the limit distribution of 
$$
\frac{\sum_{\ell=1}^L  g^-(T^{\ell-1}r) - \mu^- \log N}{\sqrt{\log N}}
$$
is normal with mean zero and some variance $\sigma^2 \geq 0$; consequently with a stronger scaling factor the limit distribution of 
$$
\frac{\sum_{\ell=1}^L  g^-(T^{\ell-1}r)}{\log N}
$$
is a Dirac measure at $\mu^-$. A similar result holds for $g^+$ and $\mu^+$ in place of $g^-$ and $\mu^-$. Since $\int_0^1 \left( g^+(x) - g^-(x) \right) \, \mathrm{d}x \leq \varepsilon$, we have $\mu^+ - \mu^- \leq 2\varepsilon$. Upon letting $\varepsilon \to 0$ this implies that the limit distribution of 
$$
\frac{\log \mathbf{J}_{4_1}(r)}{\frac{3 \textup{Vol}(4_1)}{\pi^2} \log N} - \frac{2 \log \log N}{\pi} + \underbrace{\left( \frac{2 \gamma_0 - 2 \log(6/\pi)}{\pi} + \frac{4}{\textup{Vol}(4_1)} \int_0^1 \frac{\psi^*(x)}{1+x} \, \mathrm{d}x \right)}_{=:D}
$$
is the standard stable distribution with stability parameter $1$ and skewness parameter $1$, as claimed.
\end{proof}

\section*{Acknowledgements}

CA was supported by the Austrian Science Fund (FWF), projects F-5512, I-3466, I-4945, P-34763, P-35322 and Y-901. BB was supported by FWF projects F-5510 and Y-901. We want to thank the two referees of this paper for their detailed and very valuable remarks.


\begin{thebibliography}{99}
\footnotesize{

\bibitem{AB1} C.\ Aistleitner and B.\ Borda: \textit{Maximizing Sudler products via Ostrowski expansions and cotangent sums.}  Algebra Number Theory 17 (2023), 667--717. 

\bibitem{AB2} C.\ Aistleitner and B.\ Borda: \textit{Quantum invariants of hyperbolic knots and extreme values of trigonometric products.}  Math.\  Z.\ 302 (2022), 759--782. 

\bibitem{ATZ} C.\ Aistleitner, N.\ Technau and A.\ Zafeiropoulos: \textit{On the order of magnitude of Sudler products.}  Amer.\  J.\  Math.\ 145 (2023), 721--764. 

\bibitem{BB} B.\ Berndt: \textit{What is a q-series?} Ramanujan rediscovered, 31--51,
Ramanujan Math.\ Soc.\ Lect.\ Notes Ser., 14, Ramanujan Math. Soc., Mysore, 2010. 

\bibitem{BC2} S.\ Bettin, and J.\ B.\ Conrey: \textit{A reciprocity formula for a cotangent sum.} Int.\ Math.\ Res.\ Not.\ IMRN 24 (2013), 5709--5726.

\bibitem{BD3} S.\ Bettin and S.\ Drappeau: \textit{Limit laws for rational continued fractions and value distribution of quantum modular forms.} Proc.\ London Math.\ Soc.\ 125 (2022), 1377--1425.

\bibitem{BD1} S.\ Bettin and S.\ Drappeau: \textit{Modularity and value distribution of quantum invariants of hyperbolic knots.}  Math.\  Ann.\ 382 (2022), 1631--1679. 

\bibitem{BD2} S.\ Bettin and S.\ Drappeau: \textit{Partial sums of the cotangent function.} J.\ Th\'eor.\ Nombres Bordeaux 32 (2020), 217--230.

\bibitem{BO} B.\ Borda: \textit{On the theorem of Davenport and generalized Dedekind sums.} J.\ Number Theory 172 (2017), 1--20.

\bibitem{bugeaud} Y.\ Bugeaud: \textit{Distribution modulo one and Diophantine approximation.} Cambridge Tracts in Mathematics 193. Cambridge University Press, 2012. 

\bibitem{cassels} J.W.S.\ Cassels: \textit{An introduction to Diophantine approximation.} Cambridge Tracts in Mathematics and Mathematical Physics 45. Cambridge University Press, 1957.

\bibitem{DG} T.\ Dimofte and S.\ Gukov: \textit{Quantum field theory and the volume conjecture.} Interactions between hyperbolic geometry, quantum topology and number theory, 41--67, Contemp.\ Math., 541, Amer.\ Math.\ Soc., Providence, RI, 2011. 

\bibitem{ds} M.\ Drmota and W.\ Steiner: \textit{The Zeckendorf expansion of polynomial sequences.} J. Th\'eor. Nombres Bordeaux 14 (2002), no. 2, 439--475. 

\bibitem{ESZ} P.\ Erd\H{o}s and G.\ Szekeres: \textit{On the product $\prod_{k=1}^n (1-z^{a_k})$.} Acad.\ Serbe Sci.\ Publ.\ Inst.\ Math.\ 13 (1959), 29--34.

\bibitem{GA} S.\ Garoufalidis: \textit{Quantum knot invariants.} Res.\ Math.\ Sci.\ 5 (2018), no.\ 1, Paper No.\ 11, 17 pp.

\bibitem{GKN} S.\ Grepstad, L.\ Kaltenb\"ock and M.\ Neum\"uller: \textit{A positive lower bound for $\liminf_{N \to \infty} \prod_{r=1}^N 2|\sin \pi r \phi|$.} Proc.\ Amer.\ Math.\ Soc.\ 147 (2019), 4863--4876.

\bibitem{GKN2} S.\ Grepstad, L.\ Kaltenb\"ock and M.\ Neum\"uller: \textit{On the asymptotic behaviour of the sine product $\prod_{r=1}^n 2|\sin \pi r \alpha|$.} In: Discrepancy Theory. Radon Series on Computational and Applied Mathematics 26. De Gruyter, 2020.

\bibitem{GN} S.\ Grepstad and M.\ Neum\"uller: \textit{Asymptotic behaviour of the Sudler product of sines for quadratic irrationals}.  J. Math.\ Anal.\ Appl.\ 465 (2018), 928--960. 

\bibitem{GNZ} S.\ Grepstad, M.\ Neum\"uller and A.\ Zafeiropoulos: \textit{On the order of magnitude of Sudler products II.}  Int.\ J.\ Number Theory 19 (2023), 955--996. 

\bibitem{h1} M.\ Hauke: \textit{On extreme values for the Sudler product of quadratic irrationals.} Acta Arith.\ 204 (2022), 41--82. 

\bibitem{h2}  M.\ Hauke: \textit{On the asymptotic behaviour of Sudler products for badly approximable numbers.} J.\ Math.\ Anal.\ Appl.\ 531 (2024), no. 1, part 1, Paper No. 127737, 26 pp.

\bibitem{khinchin} A.Ya.\ Khinchin: \textit{Continued fractions.} University of Chicago Press, 1964.

\bibitem{Kub} D.S.\ Kubert: \textit{The universal ordinary distribution.} Bull. Soc. Math. France 107 (1979), 179--202. 

\bibitem{KN} L.\ Kuipers and H.\ Niederreiter: \textit{Uniform distribution of sequences.} Wiley-Interscience, 1974.

\bibitem{LU} D.\ Lubinsky: \textit{The size of $(q;q)_n$ for $q$ on the unit circle.} J.\ Number Theory 76 (1999), 217--247.

\bibitem{Mil} J.\ Milnor: \textit{On polylogarithms, Hurwitz zeta functions, and the Kubert identities.} Enseign. Math. (2) 29 (1983), 281--322. 

\bibitem{MY} H.\ Murakami and Y.\ Yokota: \textit{Volume conjecture for knots.} Springer Briefs in Mathematical Physics 30. Springer, 2018.

\bibitem{rocks} A.M.\ Rockett and P.\ Sz\"usz: \textit{Continued fractions}. World Scientific (Singapore), 1992.

\bibitem{schmidt} W.M.\ Schmidt: \textit{Diophantine approximation}. Lecture Notes in Mathematics 785. Springer, 1980.

\bibitem{MV} P.\ Verschueren and B.\ Mestel: \textit{Growth of the Sudler product of sines at the golden rotation number.} J. Math.\ Anal.\ Appl.\ 433 (2016), 200--226.

\bibitem{WI} E.\ Witten: \textit{Quantum field theory and the Jones polynomial.} Braid group, knot theory and statistical mechanics, 239--329, Adv.\ Ser.\ Math.\ Phys., 9, World Sci.\ Publ., Teaneck, NJ, 1989. 

\bibitem{ZA} D.\ Zagier: \textit{Quantum modular forms.} Quanta of maths, Clay Math.\ Proc.\ pp.\ 659--675. Amer.\ Math.\ Soc.\ Providence, RI, 2010.

}
\end{thebibliography}
\end{document}